\documentclass[11pt]{article}

\usepackage{amssymb,amsfonts,amsthm}
\usepackage{amsmath}
\usepackage{amscd}
\usepackage{url}

\usepackage{mathptmx}      

\usepackage{cite}
\makeatletter
\newcommand{\citecomment}[2][]{\citen{#2}#1\citevar}
\newcommand{\citeone}[1]{\citecomment{#1}}
\newcommand{\citetwo}[2][]{\citecomment[,~#1]{#2}}
\newcommand{\citevar}{\@ifnextchar\bgroup{;~\citeone}{\@ifnextchar[{;~\citetwo}{]}}}
\newcommand{\citefirst}{\@ifnextchar\bgroup{\citeone}{\@ifnextchar[{\citetwo}{]}}}

\makeatother

\usepackage{tikz}
\usetikzlibrary{arrows}
\usetikzlibrary{arrows,shapes}

\DeclareSymbolFont{rsfscript}{OMS}{rsfs}{m}{n}
\DeclareSymbolFontAlphabet{\mathrsfs}{rsfscript}

\numberwithin{equation}{section}

\DeclareMathOperator{\alf}{alph}
\DeclareMathOperator{\Id}{Eq}
\DeclareMathOperator{\dom}{dom}
\DeclareMathOperator{\var}{var}

\theoremstyle{plain}
\newtheorem{theorem}{Theorem}[section]

\newtheorem{cor}[theorem]{Corollary}
\newtheorem{proposition}[theorem]{Proposition}
\newtheorem{lemma}[theorem]{Lemma}

\theoremstyle{remark}
\newtheorem{remark}[theorem]{Remark}
\newtheorem{step}{Step}

\newcommand{\mJ}{\mathrsfs{J}}
\newcommand{\mR}{\mathrsfs{R}}
\newcommand{\mL}{\mathrsfs{L}}

\def\fb{finitely based}
\def\ib{identity basis}
\def\nfb{non\-finitely based}
\newcommand{\pat}{partial transformation}
\newcommand{\pts}{partial transformations}

\begin{document}

\title{Catalan monoids inherently nonfinitely based relative to finite $\mR$-trivial semigroups}
\author{O. B. Sapir \and M. V. Volkov\thanks{Institute of Natural Sciences and Mathematics, Ural Federal University, 620000 Ekaterinburg, Russia}}

\date{}

\maketitle

\begin{abstract}
We show that the 42-element monoid of all partial order preserving and extensive injections on the 4-element chain is not contained in any variety generated by a finitely based finite $\mR$-trivial semigroup. This provides unified proofs for several known facts and leads to a bunch of new results on the Finite Basis Problem for finite $\mR$- and $\mJ$-trivial semigroups.
\end{abstract}

\section{The Finite Basis Problem}
\label{sec:intro1}

The present paper develops a novel approach to the Finite Basis Problem for finite semigroups and applies it to some $\mR$- and $\mJ$-trivial monoids relevant to formal languages and representation theory. We need relatively many prerequisites from different areas before stating and proving our main result in Section~\ref{sec:main1} and proceeding with its applications in Section~\ref{sec:applications}. In this section, we provide a quick introduction to the concepts related to identity bases, while the next section collects necessary information about finite $\mR$- and $\mJ$-trivial monoids and their identities.

The idea of a finite identity basis is intuitively clear. A formal framework needed to reason about this idea in precise way is provided by equational logic; see, e.g., \cite[Chapter~II]{BuSa81}. For the reader's convenience, we recall the basics of equational logic in a form adapted to the use in this paper, that is, specialized to semigroups. When doing so, we also set up our notation.

A (\emph{semigroup}) \emph{word} is a finite sequence of symbols, called \emph{variables}. Sometimes it is convenient to use the \emph{empty word}, that is, the empty sequence. Whenever words under consideration are allowed to be empty, we always say it explicitly.

We denote words by lowercase boldface letters. If $\mathbf{w}=x_1\cdots x_k$ where $x_1,\dots,x_k$ are variables, then the set $\{x_1,\dots,x_k\}$ is denoted by $\alf(\mathbf{w})$ and the number $k$ is called the \emph{length} of $\mathbf{w}$. If $\mathbf{w}$ is the empty word, then $\alf(\mathbf{w})=\varnothing$.

Words are multiplied by concatenation, that is, for any words $\mathbf{w}'$ and $\mathbf{w}$, the sequence $\mathbf{ww}'$ is obtained by appending the sequence $\mathbf{w}'$ to the sequence $\mathbf{w}$.

Any map $\varphi\colon\alf(\mathbf{w})\to S$, where $S$ is a semigroup, is called a \emph{substitution}. The \emph{value} $\varphi(\mathbf{w})$ of $\mathbf{w}$ under $\varphi$ is the element of $S$ that results from substituting $\varphi(x)$ for each variable $x\in\alf(\mathbf{w})$ and computing the product in $S$.

A (\emph{semigroup}) \emph{identity} is a pair of words written as a formal equality. We use the sign $\bumpeq$ when writing identities (so that a pair $(\mathbf{w},\mathbf{w}')$, say, is written as $\mathbf{w}\bumpeq\mathbf{w}'$), saving the standard sign $=$ for `genuine' equalities. A semigroup $S$ \emph{satisfies} $\mathbf{w}\bumpeq \mathbf{w}'$ (or $\mathbf{w}\bumpeq \mathbf{w}'$ \emph{holds} in $S$) if $\varphi(\mathbf{w})=\varphi(\mathbf{w}')$ for every substitution $\varphi\colon\alf(\mathbf{ww}')\to S$, that is, substitutions of~elements from $S$ for the variables occurring in $\mathbf{w}$ or $\mathbf{w}'$ yield equal values to these words. For a semigroup $S$, we denote by $\Id S$ its \emph{equational theory}, that is, the set of all identities $S$ satisfies.

Given any set $\Sigma$ of identities, we say that an identity $\mathbf{w}\bumpeq\mathbf{w}'$ \emph{follows} from $\Sigma$ or that $\Sigma$ \emph{implies} $\mathbf{w}\bumpeq\mathbf{w}'$ if every semigroup satisfying all
identities in $\Sigma$ satisfies the identity $\mathbf{w}\bumpeq\mathbf{w}'$ as well. Birkhoff's completeness theorem of equational logic \cite[Theorem 14.17]{BuSa81} shows that this notion (which we have given a semantic definition) is captured by a very transparent set of inference rules, namely, substituting a word for each occurrence of a variable in an identity, multiplying an identity through on the right or the left by a word, and using symmetry and transitivity of equality.

Given a semigroup ${S}$, an \emph{\ib} for ${S}$ is any set $\Sigma\subseteq\Id{S}$ such that every identity in $\Id{S}$ follows from $\Sigma$. A semigroup ${S}$ is said to be \emph{\fb} if it possesses a finite \ib, that is, the equational theory of $S$ is finitely axiomatized; otherwise, $S$ is called \emph{\nfb}.

The \emph{Finite Basis Problem} (FBP) for a class $\mathbf{C}$ of semigroups is the question of classifying semigroups in $\mathbf{C}$ for being finitely or \nfb. Whenever the class $\mathbf{C}$ consists of finite semigroups, one may consider the FBP for $\mathbf{C}$ as an algorithmic problem, asking for an algorithm that, given (an effective description of) a semigroup $S\in\mathbf{C}$, decides whether or not $S$ is \fb. The formulation of the FBP as a decision problem is due to Tarski~\cite{Ta68} who suggested it in the 1960s in the most general setting, that is, for the class of all finite algebras. In this generality, Tarski's problem was solved by McKenzie \cite{Mc96} who proved that no algorithm can recognize which finite algebras are \fb. When restricted to finite semigroups, Tarski's problem remains open.

While partial results on the FBP for finite semigroups are numerous, they all employ only a handful of methods; see the second-named author's survey~\cite{Vo01} for a classification and analysis of these methods. One of the most powerful and easy-to-use approaches is based on the concept of an inherently \nfb\ semigroup that we explain next, after recalling the notion of a variety.

The class of all semigroups satisfying all identities from a given set $\Sigma$ is called the \emph{variety defined by $\Sigma$}. It is easy to see that the satisfaction of an identity is inherited by forming direct products and taking \emph{divisors} (that is, homomorphic images of subsemigroups) of semigroups so that each variety is closed under these two operators. In fact, varieties can be characterized by this closure property (the HSP-theorem; see \cite[Theorem 11.9]{BuSa81}).

A variety is \emph{\fb} if it can be defined by a finite set of identities; otherwise it is \emph{\nfb}. Given a semigroup ${S}$, the variety defined by $\Id S$ is denoted by $\var S$ and called the \emph{variety generated by $S$}. By the very definition, $S$ and $\var S$ are simultaneously finitely or \nfb.

A variety is said to be \emph{locally finite} if each of its finitely generated members is finite. A  finite semigroup is called \emph{inherently \nfb} if it is not contained in any finitely based locally finite variety. The variety generated by a finite semigroup is locally finite (this is an easy byproduct of the proof of the HSP-theorem; see \cite[Theorem 10.16]{BuSa81}); hence, to prove that a given finite semigroup $S$ is \nfb, it suffices to exhibit an inherently \nfb\ semigroup in the variety $\var S$.

For the argument of the preceding paragraph to be applicable, one needs some supply of inherently \nfb\ semigroups. In fact, it was not clear whether such semigroups exist\footnote{For instance, no inherently \nfb\ objects exist in several natural classes of \emph{unary semigroups}, that is, semigroups equipped with an extra unary operation \cite{Sa93,Do10}.} until Mark Sapir~\cite{Sa87a} found the first examples. In~\cite{Sa87b} he gave a structural characterization of all inherently \nfb\ semigroups that, in particular, led to an algorithm to recognize whether or not a given finite semigroup is inherently \nfb. (This contrasts McKenzie's result \cite{Mc96} that no such algorithm exists for general finite algebras.) The characterization allows one to locate many inherently \nfb\ semigroups of importance \cite{Ja02,VoGo03}; on the other hand, it reveals some limitations of the described approach to the FBP, implying that certain interesting classes of finite semigroups lack inherently \nfb\ members. The present paper aims to overcome these limitations for one of such classes.

\section{$\mR$- and $\mJ$-trivial monoids and their identities}
\label{sec:intro2}

In this paper, we focus on two classes of semigroups which only sparsely show up in the standard textbooks on semigroup theory. That is why we provide rather a self-contained introduction to these classes, relying only on few basic notions that all can be found in \cite[Chapter 1]{ClPr61} or \cite[Chapter 1]{Ho95}.

A semigroup $S$ is $\mR$-\emph{trivial} if every principal right ideal of $S$ has a unique generator. This means that the following implication holds for all $a,b\in S$:
\begin{equation}\label{eq:rtriv}
aS\cup\{a\}=bS\cup\{b\}\to a=b.
\end{equation}
A semigroup $S$ is $\mJ$-\emph{trivial} if every principal ideal of $S$ has a unique generator. In other words, $S$ is $\mJ$-trivial if the following implication holds:
\begin{equation}\label{eq:jtriv}
SaS\cup Sa\cup aS\cup\{a\}=SbS\cup Sb\cup bS\cup\{b\}\to a=b.
\end{equation}
Each $\mJ$-trivial semigroup is $\mR$-trivial. Indeed,
\[
SaS\cup Sa\cup aS\cup\{a\}=S(aS\cup\{a\})\cup aS\cup\{a\},
\]
whence the premise of the implication \eqref{eq:rtriv} implies that of the implication \eqref{eq:jtriv}. Therefore, \eqref{eq:rtriv} holds whenever \eqref{eq:jtriv} does.

A \emph{monoid} is a semigroup with an identity element. Now we introduce three series of $\mR$- and $\mJ$-trivial monoids that play a role in this paper. Let $[m]$ stand for the set of the first $m$ positive integers ordered in the usual way: $1<2<\dots<m$. By a \emph{\pat} of $[m]$ we mean an arbitrary map $\alpha$ from a subset of $[m]$ (called the \emph{domain} of $\alpha$ and denoted $\dom\alpha$) to $[m]$. A \pat\ of $[m]$ is said to be \emph{total} if its domain is the whole set $[m]$. We write \pts\ on the right of their arguments. A \pat\ $\alpha$ is \emph{order preserving} if $i\le j$ implies $i\alpha\le j\alpha$ for all $i,j\in\dom\alpha$, and \emph{extensive} if $i\le i\alpha$ for every $i\in\dom\alpha$.  Clearly, if two transformations have either of the properties of being total, order preserving, or extensive, then so does their product, and the identity transformation enjoys all three properties. Hence, the set of all total extensive transformations of $[m]$ forms a monoid which we denote by $E_m$, and the set of all order preserving transformations in $E_m$ forms a submonoid denoted by $C_{m}$ and called the \emph{Catalan monoid}. (The name comes from the cardinality of $C_m$ coinciding with the $m$-th Catalan number $\frac1{m+1}\binom{2m}m$; see \cite[Theorem 3.1]{Hi93}, \cite[Proposition 3.4]{So96}, or \cite[Theorem 14.2.8(i)]{GaMa09}.) The third series we need consists of monoids that we denote by $IC_m$ and call $i$-\emph{Catalan monoids}. Both `I' and `$i$' in the name mean `injective' and indicate that the monoid $IC_m$ is the set of all partial injections of $[m]$ that are extensive and order preserving. The `Catalan' part of the name again refers to the cardinality of the monoid: $|IC_m|$ is the $(m+1)$-th Catalan number; see \cite[Theorem 14.2.8(ii)]{GaMa09}\footnote{The proof of this result in \cite{GaMa09} relies on a well-known recurrence for Catalan numbers. In Appendix A we exhibit a direct bijection between $IC_m$  and $C_{m+1}$.}.

For each $m$, the monoid $E_m$ is $\mR$-trivial while the monoid $C_m$ is $\mJ$-trivial; see \cite[Propositions IV.3.1 and IV.1.8]{Pin86}. The monoid $IC_m$ is $\mJ$-triv\-i\-al and has commuting idempotents (that is, $IC_m$ satisfies $e^2=e\ \&\ f^2=f\to ef=fe$); see, e.g., \cite[p.88]{Hi99}.  The three series of examples are representative for the corresponding classes of finite monoids in the following sense.
\begin{proposition}
\label{prop:embedding&division}
\emph{(a)} Every finite $\mR$-trivial monoid with $m$ elements is isomorphic to a submonoid of the monoid $E_m$.\\
\emph{(b)} Every finite $\mJ$-trivial monoid is a divisor of the monoid $C_m$ for some $m$.\\
\emph{(c)} Every finite $\mJ$-trivial monoid with commuting idempotents is a divisor of the monoid $IC_m$ for some $m$.
\end{proposition}

\begin{proof}
Claims (a) and (b) can be found in \cite[Theorems IV.3.6 and IV.1.10]{Pin86} and claim (c) readily follows from  \cite[Theorem 4.1]{AHP90}.
\end{proof}

\begin{remark}
\label{rem:cayley}
Proposition~\ref{prop:embedding&division} looks quite innocent as it is stated in purely semi\-group-theoretical terms and very much resembles the textbook Cayley-type theorem that an arbitrary semigroup embeds into the monoid of all transformations of a suitable set. This analogy indeed works for claim (a), but the situation with claims (b) and (c) is very different. No direct semigroup-theoretical proof of Proposition~\ref{prop:embedding&division}(b) is known. The cited proof in~\cite{Pin86} uses a technique of Straubing~\cite{St80} which crucially depends on Simon's theorem~\cite{Si72,Si75}, a deep combinatorial result in the theory of recognizable languages. Moreover, it can be shown relatively easily that Proposition~\ref{prop:embedding&division}(b) and Simon's theorem are equivalent to each other, and therefore, a~direct proof of the former would provide a new algebraic proof of the latter. In the literature, there are many proofs of Simon's theorem, based on different approaches, but none of the proofs are purely algebraic. Similarly, Proposition~\ref{prop:embedding&division}(c) is a consequence of another deep combinatorial result due to Ash~\cite{Ash87} that solved a problem stemming from language theory; see \cite{MaPi87}.
\end{remark}

Simon's theorem mentioned in Remark~\ref{rem:cayley} establishes a strong relationship between finite $\mJ$-trivial monoids and so-called piecewise testable languages. Finite $\mR$-trivial monoids are related to a language class characterized by Eilenberg (see \cite[Theorem IV.3.3]{Pin86}) and, in a different way, by Brzozowski and Fich~\cite{BrFi80}. Both $\mR$- and $\mJ$-trivial finite monoids are of major interest for representation theory; see, e.g., \cite[Chapter 17]{St16} and references therein. A striking application of finite $\mR$-triv\-i\-al monoids to the analysis of Markov chains appears in \cite{ASST15}. Recent connections of finite $\mJ$-triv\-i\-al monoids include tropical geometry (the gossip monoid of~\cite{BDF15}) and combinatorics of Young tableaux (the stylic monoid of~\cite{AbRe22}). These diverse connections and applications make it worthwhile to study both the class $\mathbf{R}$  of finite $\mR$-trivial monoids and the class $\mathbf{J}$ of finite $\mJ$-trivial monoids. In spite of the word `trivial' present in their names, these objects are by no means trivial.

The non-triviality just noticed manifests in the study of the FBP for $\mathbf{R}$ and $\mathbf{J}$, and even for the smaller class $\mathbf{J}\cap\mathbf{Ecom}$ of finite $\mJ$-trivial monoids with commuting idempotents.  Already one of the two first examples of \nfb\ finite semigroups from Perkins's pioneering paper~\cite{Pe69} was a monoid from $\mathbf{J}\cap\mathbf{Ecom}$. Mark Sapir suggested to investigate the FBP for a certain subclass of $\mathbf{J}\cap\mathbf{Ecom}$ (containing the aforementioned example from~\cite{Pe69}); see~\cite[Problem 4.1]{Vo01} or~\cite[Problem 3.10.10]{Sa14}. This inspired massive studies by the first-named author and Jackson \cite{Sa97,Sa00,Ja99,JaSa00,Ja01} that revealed that the complexity of the FBP for this particular subclass of $\mathbf{J}\cap\mathbf{Ecom}$ is already well comparable with that for the whole class of finite semigroups.

Among numerous partial results on the FBP for $\mathbf{R}$, $\mathbf{J}$, and $\mathbf{J}\cap\mathbf{Ecom}$, we include here only the following concerning the series $\{E_m\}_{m\ge1}$, $\{C_m\}_{m\ge1}$, and $\{IC_m\}_{m\ge1}$:
\begin{proposition}
\label{prop:FBP}
\emph{(a)} The monoid $E_m$ is \fb\ if and only if $m\le 4$.\\
\emph{(b)} The monoid $C_m$ is \fb\ if and only if $m\le 4$.\\
\emph{(c)}  The monoid $IC_m$ is \fb\  if and only if $m<3$.
\end{proposition}

\begin{proof}
Claim (a) is a combination of \cite[Theorem 1.1]{Go07}, \cite[Proposition 3.3]{Lee09}, and \cite[Theorem 1]{LiLo10}. The three cited statements deal with the cases $m>4$, $m<4$, and $m=4$, respectively.

Claim (b) is a part of \cite[Theorem 1]{Vo04}.

Claim (c) is a combination of \cite[Theorem 1]{Go06a}, \cite[Proposition 3.1(i)]{Ed77}, and \cite[Theorem 4.4]{HCL15}. The three cited statements deal with the cases $m>3$, $m<3$, and $m=3$, respectively.
\end{proof}

Proposition~\ref{prop:FBP}(b) in deduced in \cite{Vo04} from results by Blanchet-Sadri~\cite{Bl93,Bl94} combined with a description of the equational theory of the monoid $C_{m}$. We recall the description as it is utilized in this paper too.

A word $\mathbf{u}=x_1\cdots x_k$, where $x_1,\dots,x_k$ are variables, is a \emph{scattered subword} of a word $\mathbf{v}$ if there are words $\mathbf{v}_0, \mathbf{v}_1, \dots,  \mathbf{v}_{k-1}, \mathbf{v}_k$ (some of which may be empty) with
\begin{equation}
\label{eq:scattered}
\mathbf{v} = \mathbf{v}_0 x_1 \mathbf{v}_1\cdots \mathbf{v}_{k-1}x_k\mathbf{v}_k.
\end{equation}
Thus, \eqref{eq:scattered} means that $\mathbf{u}$ as a sequence of variables is a subsequence in $\mathbf{v}$. For $m\ge 1$, denote by $J_m$ the set of all identities $\mathbf{w}\bumpeq\mathbf{w'}$ such that the words $\mathbf{w}$ and $\mathbf{w'}$ have the same scattered subwords of length $\le m$. For convenience, let $J_0$ denote the set of all semigroup identities.

\begin{proposition}[{\!\mdseries\cite[Theorem 2]{Vo04}}]
\label{prop:JMC}
$\Id C_{m}=J_{m-1}$ for each $m\ge1$.
\end{proposition}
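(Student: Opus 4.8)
The plan is to prove that $\Id C_m = J_{m-1}$ by establishing the two inclusions separately, treating them as genuinely different tasks. The inclusion $J_{m-1} \subseteq \Id C_m$ is the ``soundness'' direction: every identity whose two sides agree on all scattered subwords of length $\le m-1$ must actually hold in $C_m$. The reverse inclusion $\Id C_m \subseteq J_{m-1}$ is the ``completeness'' direction: any identity that genuinely holds in $C_m$ cannot distinguish words by a scattered subword of length $\le m-1$. My expectation is that the completeness direction is the easier one, built from well-chosen substitutions that detect scattered subwords, while the soundness direction carries the combinatorial weight.

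For the completeness inclusion, suppose $\mathbf{w} \bumpeq \mathbf{w}'$ holds in $C_m$ and, toward a contradiction, that some word $\mathbf{u} = x_1 \cdots x_k$ of length $k \le m-1$ is a scattered subword of $\mathbf{w}$ but not of $\mathbf{w}'$ (or vice versa). I would exploit the order-preserving extensive transformations of $[m]$ to build a substitution $\varphi$ that ``records'' the longest scattered subword of $\mathbf{u}$-type that a word contains. Concretely, the idempotents and the short ``jump'' transformations in $C_m$ can be arranged so that the value $\varphi(\mathbf{w})$ moves a designated point of $[m]$ strictly farther to the right exactly when the relevant scattered subword appears, so that $\varphi(\mathbf{w}) \ne \varphi(\mathbf{w}')$. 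Here the bound $k \le m-1$ is precisely what allows the chain $[m]$, with $m$ points, to accommodate $k+1 \le m$ distinct ``stages'' needed to witness a length-$k$ subword extensively. The main care is to design the substitution variable-by-variable so that repeated variables in $\mathbf{u}$ are handled consistently and so that variables of $\mathbf{w}$ outside $\alf(\mathbf{u})$ act as the identity or as harmless idempotents.

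For the soundness inclusion, I would show that whenever $\mathbf{w}$ and $\mathbf{w}'$ share all scattered subwords of length $\le m-1$, every substitution $\varphi \colon \alf(\mathbf{ww}') \to C_m$ yields $\varphi(\mathbf{w}) = \varphi(\mathbf{w}')$. The key structural fact I would invoke is that an element of $C_m$, being an extensive order-preserving total map of the chain $[m]$, is determined by how it acts on each point, and extensivity forces the orbit of any point $i$ under repeated application to be a weakly increasing, eventually stationary sequence bounded by $m$; thus the action ``saturates'' after at most $m-1$ genuine increases. The plan is to track, for a fixed starting point $i \in [m]$, the image $i\,\varphi(\mathbf{w})$ by reading $\mathbf{w}$ left to right and recording each variable whose substituted transformation strictly increases the running point. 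This produces a scattered subword of $\mathbf{w}$ of length at most $m-1$ (since a point can strictly increase at most $m-1$ times in $[m]$), and the final image $i\,\varphi(\mathbf{w})$ depends only on this bounded-length scattered subword. Since $\mathbf{w}$ and $\mathbf{w}'$ have identical scattered subwords up to length $m-1$, the induced maximal increasing ``trace'' from $i$ agrees, giving $i\,\varphi(\mathbf{w}) = i\,\varphi(\mathbf{w}')$ for every $i$, hence $\varphi(\mathbf{w}) = \varphi(\mathbf{w}')$.

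The step I expect to be the main obstacle is making the soundness argument fully rigorous, specifically the claim that the final image of a point depends \emph{only} on the set of short scattered subwords rather than on finer positional data. The subtlety is that whether a given variable's transformation actually increases the running point depends on the current position, which in turn depends on the history; so one cannot naively read off a single scattered subword independent of $\varphi$. The clean way to resolve this, which I would pursue, is an induction on $m$ (or on the number of remaining increases available), reducing the action on $[m]$ to the action on a shorter chain once the running point has moved past some threshold, and matching these reductions on $\mathbf{w}$ and $\mathbf{w}'$ using the hypothesis that their scattered subwords of the relevant lengths coincide. Managing the bookkeeping of this induction so that the length bound comes out exactly as $m-1$, rather than $m$, is where the argument must be most careful.
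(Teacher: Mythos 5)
The paper itself gives no proof of this proposition --- it is imported verbatim from \cite[Theorem 2]{Vo04} --- so your proposal can only be judged on its own merits. Its first half stands: for the inclusion $\Id C_m\subseteq J_{m-1}$, your detector substitution works exactly as you describe. Concretely, given $\mathbf{u}=x_1\cdots x_k$ with $k\le m-1$ scattered in $\mathbf{w}$ but not in $\mathbf{w}'$, send each variable $y$ to the transformation of $[m]$ fixing every $i>k$ and sending $i\le k$ to $i+1$ if $y=x_i$ and to $i$ otherwise; this map is total, extensive and order preserving (repeated letters among $x_1,\dots,x_k$ cause no harm), greedy matching of $\mathbf{u}$ gives $1\varphi(\mathbf{w})=k+1$ while $1\varphi(\mathbf{w}')\le k$, and the inequality $k+1\le m$ is exactly where the hypothesis $k\le m-1$ enters.

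The genuine gap is in the other inclusion, at the very step you flagged: from ``$\mathbf{w}$ and $\mathbf{w}'$ have the same scattered subwords of length $\le m-1$'' you conclude that ``the induced maximal increasing trace from $i$ agrees,'' and that claim is simply false. In $C_3$ take $\varphi(x)\colon 1,2,3\mapsto 3,3,3$ and $\varphi(y)\colon 1,2,3\mapsto 2,2,3$; the words $xyxy$ and $yxyx$ share all scattered subwords of length $\le 2$, yet the trace of the point $1$ is $x$ in the first word and $yx$ in the second. So the trace is not an invariant of the short-subword data, and your proposed repair (an induction on $m$ with threshold reductions) is a sketch aimed at an unprovable matching statement. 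What actually closes the argument is weaker and much simpler: you never need the traces to match, only the final values. Since every $\varphi(z)$ is extensive and every $\varphi(\mathbf{v})$ is order preserving, inserting a letter anywhere in a word weakly increases the image of every point; hence if $\mathbf{u}$ is a scattered subword of $\mathbf{v}$, then $i\varphi(\mathbf{u})\le i\varphi(\mathbf{v})$ for all $i$. Now let $\mathbf{u}$ be the trace of $i$ in $\mathbf{w}$: then $i\varphi(\mathbf{w})=i\varphi(\mathbf{u})$ (between strict increases the point stands still), and $\mathbf{u}$ has length $\le m-1$ because a point of $[m]$ can strictly increase at most $m-1$ times; so by hypothesis $\mathbf{u}$ is scattered in $\mathbf{w}'$, giving $i\varphi(\mathbf{w})=i\varphi(\mathbf{u})\le i\varphi(\mathbf{w}')$. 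By symmetry the reverse inequality also holds, so $\varphi(\mathbf{w})=\varphi(\mathbf{w}')$. With this replacement of your matching step, your outline becomes a complete proof.
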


A description of the equational theory of the monoid $E_{m}$ is also known (see \cite[Proposition 2.2]{Go07}), but we do not reproduce it here as it is not used in this paper. However, we need a description of $\Id IC_m$. It  involves the following notion: a scattered subword $\mathbf{u}=x_1\cdots x_k$ of a word $\mathbf{v}$ is said to be \emph{unambiguously scattered} if $\mathbf{v}$ has a unique decomposition of the form \eqref{eq:scattered}. For each $m\ge0$, denote by $U_m$ the set of all identities $\mathbf{w}\bumpeq\mathbf{w'}$ such that
\begin{itemize}
  \item[(i)] $\alf(\mathbf{w})=\alf(\mathbf{w}')$;
  \item[(ii)] $\mathbf{w}$ and $\mathbf{w'}$ have the same unambiguously scattered subwords of length $\le m$;
  \item[(iii)] if $\mathbf{u}=x_1\cdots x_k$ with $k\le m$ is unambiguously scattered in $\mathbf{w}$ and $\mathbf{w'}$ and
\begin{align*}
\mathbf{w}&= \mathbf{w}_0 x_1 \mathbf{w}_1\cdots \mathbf{w}_{k-1}x_k\mathbf{w}_k,\\
\mathbf{w}'&= \mathbf{w}'_0 x_1 \mathbf{w}'_1\cdots \mathbf{w}'_{k-1}x_k\mathbf{w}'_k,
\end{align*}
then $\alf(\mathbf{w}_i)=\alf(\mathbf{w}'_i)$ for all $i=0,1,\dots,k$.
\end{itemize}
Notice that for $m=0$, the conditions (ii) and (iii) become void so that $U_0$ is merely the set of all identities $\mathbf{w}\bumpeq\mathbf{w'}$ satisfying $\alf(\mathbf{w})=\alf(\mathbf{w}')$.

\begin{proposition}[{\!\mdseries\cite[Proposition 2]{Go06a}}]
\label{prop:UMPIC}
$\Id IC_m=U_{m-1}$ for each $m\ge1$.
\end{proposition}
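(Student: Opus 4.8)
The plan is to prove the two inclusions $U_{m-1}\subseteq\Id IC_m$ and $\Id IC_m\subseteq U_{m-1}$ separately, in both cases analysing how a substitution $\varphi\colon\alf(\mathbf{w}\mathbf{w}')\to IC_m$ moves a single point $i\in[m]$ through a word. The basic observation is that reading a word $\mathbf{w}=y_1\cdots y_n$ letter by letter produces a sequence of values $i=i_0\le i_1\le\cdots$ (by extensivity) as long as the point stays in the relevant domains; since the values lie in $[m]$ and strictly increase at each ``jump'', there are at most $m-1$ jumps. Recording the letters that cause the jumps yields a scattered subword of length $\le m-1$, and between consecutive jumps every letter must fix the current value (otherwise the point either jumps or leaves the domain). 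I would package this as a lemma describing $i\varphi(\mathbf{w})$ and whether $i\in\dom\varphi(\mathbf{w})$ purely in terms of this data.

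For the inclusion $U_{m-1}\subseteq\Id IC_m$ the crucial point is that injectivity upgrades the greedy decomposition $\mathbf{w}=\mathbf{w}_0x_1\mathbf{w}_1\cdots x_s\mathbf{w}_s$ along the jumps into an unambiguous one. Indeed, the letter $x_r$ sends the value $a_{r-1}$ to the strictly larger value $a_r$, while every letter of $\mathbf{w}_r$ fixes $a_r$; were $x_r$ to occur in $\mathbf{w}_r$, the injection $\varphi(x_r)$ would map both $a_{r-1}$ and $a_r$ to $a_r$, which is impossible. Thus $x_r\notin\alf(\mathbf{w}_r)$, and together with the leftmost property $x_r\notin\alf(\mathbf{w}_{r-1})$ this forces $x_1\cdots x_s$ to be \emph{unambiguously} scattered in $\mathbf{w}$. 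Hence, if $\mathbf{w}\bumpeq\mathbf{w}'$ lies in $U_{m-1}$, condition (ii) makes the same unambiguous subword occur in $\mathbf{w}'$ and condition (iii) makes the corresponding blocks have the same alphabet; since the members of a block are exactly the variables required to fix the current value, the point $i$ follows an identical trajectory through $\mathbf{w}'$, surviving to the same value or dying in both words. As $i$ is arbitrary, $\varphi(\mathbf{w})=\varphi(\mathbf{w}')$.

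For the reverse inclusion I would refute any identity $\mathbf{w}\bumpeq\mathbf{w}'$ violating (i), (ii), or (iii) by an explicit substitution into $IC_m$. A violation of (i) is trivial: send the missing variable to the empty partial injection and all others to the identity. The remaining cases use one ``climbing'' substitution built from a witnessing unambiguous subword $\mathbf{u}=x_1\cdots x_k$ (with $k\le m-1$): each variable advances level $r\mapsto r+1$ whenever it equals $x_r$, fixes the levels it is allowed to fix, and drops the levels that injectivity forbids it to fix. The two unambiguity conditions verified above guarantee that, inside $\mathbf{w}$, the point $1$ ascends cleanly $1\to2\to\cdots\to k+1$. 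If (ii) fails, then in $\mathbf{w}'$ the subword either fails to embed, so the ascent stalls below $k+1$, or embeds only ambiguously, in which case a repeated subword letter meets the climbing point at a level outside its domain and the point is lost; either way the two sides disagree at $1$. If (iii) fails, one picks a variable lying in a block of one word but not in the matching block of the other and deletes from its domain the single level that that block is supposed to fix; this kills the climbing point in one word while leaving its ascent in the other intact.

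The main obstacle is the dictionary underpinning the first inclusion, namely the identification of surviving point-trajectories with unambiguously scattered subwords carrying prescribed block alphabets; this is exactly the place where injectivity of the transformations (as opposed to the merely order-preserving extensive maps giving $C_m$ in Proposition~\ref{prop:JMC}) enters and explains why conditions (ii) and (iii) replace the coarser ``same scattered subwords'' invariant. On the constructive side, the delicate step is checking that the climbing substitution and its per-case modifications stay inside $IC_m$ (injective, order preserving, extensive) and realise the intended behaviour even when variables repeat and play several roles at once; the ambiguous subcase of (ii) in particular requires care to ensure the forced collision actually removes the point from the domain.
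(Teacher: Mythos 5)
The first thing to note is that the paper contains no proof of Proposition~\ref{prop:UMPIC}: the statement is imported verbatim from Goldberg \cite[Proposition 2]{Go06a}, so there is no in-paper argument to compare yours against, and your proposal must be judged on its own merits. On those merits it is correct, and both halves can be completed along exactly the lines you describe. For $U_{m-1}\subseteq\Id IC_m$, your dictionary between surviving point-trajectories and unambiguously scattered subwords is the right invariant: if the point jumps at letters $x_1,\dots,x_s$ (with $s\le m-1$, since the values strictly increase inside $[m]$) and takes values $a_0<a_1<\dots<a_s$, then single-valuedness of $\varphi(x_r)$ excludes $x_r$ from the block preceding its jump (a letter there would have to fix $a_{r-1}$, which $\varphi(x_r)$ moves) and injectivity excludes it from the block following it (it would send both $a_{r-1}$ and $a_r$ to $a_r$). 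The key \emph{unstated} lemma of your proof is that these two exclusions, for all $r$, are equivalent to unambiguity of the embedding; you use the direction ``exclusions imply uniqueness'' here and again in the ambiguous subcase of (ii), and the converse direction when checking that your climbing substitution is well defined, injective, and conflict-free, so in a written version this lemma must be isolated and proved (it is elementary: compare an alleged second embedding with the given one at the last index where they differ). For $\Id IC_m\subseteq U_{m-1}$ your climbing substitution does work, with two caveats that you only half-address. First, in the case of a failing condition (iii), the ``levels it is allowed to fix'' must be determined by the blocks of \emph{both} words (equivalently, by maximal fixing subject to the injectivity/functionality constraints of the climbing rule), since a fixing rule read off from the blocks of $\mathbf{w}$ alone need not let the point ascend through $\mathbf{w}'$; unambiguity in both words guarantees that this symmetric rule creates no conflicts. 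Second, in the ambiguous subcase of (ii) the climbing point need not literally be ``lost''; the clean statement is the contrapositive: any ascent of $1$ to $k+1$ through $\mathbf{w}'$ marks out an embedding of $\mathbf{u}$ in $\mathbf{w}'$ whose blocks exclude the letters $x_r$ on both sides (by the same functionality/injectivity argument), hence certifies that $\mathbf{u}$ is unambiguously scattered in $\mathbf{w}'$ --- contradicting the assumed ambiguity, so the two sides disagree at $1$ whether the point dies or merely stalls. With these points made explicit, your sketch becomes a complete and self-contained proof of the cited result.
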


The identity sets $J_m$ and $U_m$ relate as follows:
\begin{lemma}
\label{lem:inclusion}
$J_{m+1}\subseteq U_m$ for each $m\ge0$, and for $m>0$, the inclusion is strict.
\end{lemma}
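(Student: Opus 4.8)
The plan is to reduce every condition defining $U_m$ to a statement about the sets of \emph{scattered subwords of length $\le m+1$}, because two words related by an identity in $J_{m+1}$ have, by definition, exactly the same such sets. The crux consists of two elementary facts about an embedding of a short word $\mathbf{u}=x_1\cdots x_k$ (with $k\le m$) into a word $\mathbf{w}$, each converting an ``unambiguity/alphabet'' statement into a ``scatteredness'' statement involving a word of length at most $m+1$. Throughout, one uses the leftmost embedding (match each variable greedily from the left) and the rightmost embedding, noting that these coincide iff the decomposition of the form \eqref{eq:scattered} is unique, i.e. iff $\mathbf{u}$ is unambiguously scattered.

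First I would establish the characterization that $\mathbf{u}=x_1\cdots x_k$ \emph{fails} to be unambiguously scattered in $\mathbf{w}$ iff, for some $i$, the word $x_1\cdots x_{i-1}x_ix_ix_{i+1}\cdots x_k$ obtained by doubling the $i$-th letter is scattered in $\mathbf{w}$. For the ``if'' direction, two distinct occurrences of $x_i$ that can both be used (with the other variables placed before the first and after the second) produce two embeddings of $\mathbf{u}$; for the ``only if'' direction, if the leftmost positions $p_1<\cdots<p_k$ differ from the rightmost positions $q_1<\cdots<q_k$, choose $i$ with $p_i<q_i$ and note that $p_1<\cdots<p_i<q_i<q_{i+1}<\cdots<q_k$ realizes the doubled word. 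Since both the scatteredness of $\mathbf{u}$ (length $k\le m$) and the scatteredness of each doubled word (length $k+1\le m+1$) are decided by the common set of length-$\le(m+1)$ scattered subwords, this at once gives condition (ii).

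Next I would prove the insertion fact for (iii): if $\mathbf{u}=x_1\cdots x_k$ is unambiguously scattered in $\mathbf{w}$, with unique decomposition $\mathbf{w}=\mathbf{w}_0x_1\mathbf{w}_1\cdots x_k\mathbf{w}_k$, then for every variable $y$ and every index $i$ one has $y\in\alf(\mathbf{w}_i)$ iff $x_1\cdots x_i\,y\,x_{i+1}\cdots x_k$ is scattered in $\mathbf{w}$. The forward implication is immediate; for the converse, any embedding of this length-$(k+1)$ word restricts to an embedding of $\mathbf{u}$, which by uniqueness occupies exactly the forced positions, so $y$ lands strictly between the positions of $x_i$ and $x_{i+1}$, that is, inside $\mathbf{w}_i$. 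As the inserted word has length $k+1\le m+1$, its scatteredness is the same in $\mathbf{w}$ and in any $\mathbf{w}'$ with $\mathbf{w}\bumpeq\mathbf{w}'\in J_{m+1}$; since (ii) already forces $\mathbf{u}$ to be unambiguously scattered in $\mathbf{w}'$ too, comparing the two instances yields $\alf(\mathbf{w}_i)=\alf(\mathbf{w}'_i)$, which is (iii). Condition (i) is just the match of length-$1$ scattered subwords, so $J_{m+1}\subseteq U_m$ follows; for $m=0$ the conditions (ii) and (iii) are void while (i) still holds, consistent with the equality $J_1=U_0$ there.

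For strictness when $m>0$ I would exhibit one identity in $U_m\setminus J_{m+1}$, namely $x^{m+1}y^{m+1}\bumpeq(xy)^{m+1}$. Both words have alphabet $\{x,y\}$, and counting embeddings shows the only unambiguously scattered subwords of $x^{m+1}y^{m+1}$ are $x^{m+1}$, $y^{m+1}$, $x^{m+1}y^{m+1}$ and the empty word, all of length $0$ or $\ge m+1$; likewise $(xy)^{m+1}$ has no unambiguously scattered subword of length in $\{1,\dots,m\}$, since any subword of length $\le m$ already embeds into the prefix $(xy)^{m}$ and its last letter can then be re-placed in the final block $xy$, giving a second embedding. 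Thus both words share the same (empty) list of unambiguously scattered subwords of length $\le m$, so (ii) and (iii) hold vacuously and the identity lies in $U_m$; but $yx$ is scattered in $(xy)^{m+1}$ and not in $x^{m+1}y^{m+1}$, and $|yx|=2\le m+1$, so the identity is not in $J_{m+1}$. I expect the main obstacle to be pinning down the unambiguity characterization precisely, in particular the leftmost/rightmost bookkeeping and the placement of the remaining variables around the doubled letter; once that lemma and the insertion fact are secured, the verification of (i)--(iii) and of the strictness example is routine.
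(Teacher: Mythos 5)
Your proof is correct, and its core is the same as the paper's: both reduce conditions (ii) and (iii) to the presence of length-$(k+1)$ scattered subwords obtained from $\mathbf{u}$ by doubling a letter (for ambiguity) or inserting a letter into a gap (for the alphabets $\alf(\mathbf{w}_i)$), and both settle (i) via length-$1$ subwords and $J_1=U_0$. There are two genuine local differences. First, to produce the doubled-letter witness the paper compares the leftmost embedding with an \emph{arbitrary} second embedding and needs a backward induction to locate an index $i$ with $x_i\in\alf(\mathbf{w}_i)$; you instead compare the leftmost and rightmost embeddings, so that choosing $i$ with $p_i<q_i$ immediately yields the increasing sequence $p_1<\cdots<p_i<q_i<\cdots<q_k$ realizing $x_1\cdots x_ix_ix_{i+1}\cdots x_k$ --- a cleaner argument that eliminates the induction, at the small cost of having to justify that the greedy embeddings are coordinatewise extremal. (One caveat: state your characterization for words $\mathbf{u}$ that \emph{are} scattered in $\mathbf{w}$, since a word that is not scattered at all also ``fails to be unambiguously scattered'' yet admits no scattered doubling; your application handles this correctly by bundling scatteredness of $\mathbf{u}$ with non-scatteredness of its doublings, both determined by the length-$\le(m+1)$ data.) Second, your strictness witness $x^{m+1}y^{m+1}\bumpeq(xy)^{m+1}$ differs from the paper's $x^{m+1}y^{m+1}\bumpeq y^{m+1}x^{m+1}$; both are valid, though the paper's is marginally easier to check since the verification for $y^{m+1}x^{m+1}$ is the same binomial-coefficient count as for $x^{m+1}y^{m+1}$, whereas your word $(xy)^{m+1}$ needs the separate prefix-plus-relocation argument you give.
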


\begin{proof}
First consider the case $m=0$. The fact that two words $\mathbf{w}$ and $\mathbf{w}'$ have the same scattered subwords of length 1 amounts to saying that $\mathbf{w}$ and $\mathbf{w}'$ involve the same variables, that is, $\alf(\mathbf{w})=\alf(\mathbf{w}')$. Thus, $J_1$ coincides with the set of all identities $\mathbf{w}\bumpeq\mathbf{w}'$ satisfying $\alf(\mathbf{w})=\alf(\mathbf{w}')$, and as observed after the definition of the set $U_m$,
the same holds for $U_0$. Thus, $J_1=U_0$.

Now let $m>0$. To prove that $J_{m+1}\subseteq U_m$, we take an arbitrary identity $\mathbf{w}\bumpeq\mathbf{w}'$ from $J_{m+1}$ and show that it lies in $U_m$, arguing by contradiction. If $(\mathbf{w}\bumpeq\mathbf{w}')\notin U_m$, then the identity violates one of the conditions (i)--(iii) from the definition of $U_m$. As already observed, the fact that $\mathbf{w}$ and $\mathbf{w}'$ share scattered subwords of length 1 implies $\alf(\mathbf{w})=\alf(\mathbf{w}')$ so the condition (i) holds for $\mathbf{w}\bumpeq\mathbf{w}'$.

Suppose that the condition (ii) fails, that is, for some $k\le m$, one of the words $\mathbf{w}$ or $\mathbf{w}'$ has an unambiguously scattered subword $\mathbf{u}$ of length $k$ which is not unambiguously scattered in the other word. Let, for certainty, $\mathbf{u}$ be unambiguously scattered in $\mathbf{w}'$ but not in $\mathbf{w}$. As $\mathbf{w}$ and $\mathbf{w'}$ share scattered subwords of length $k\le m+1$, the word $\mathbf{u}$ does occur as a scattered subword in $\mathbf{w}$ but not in a unique way. Let $\mathbf{u}=x_1\cdots x_k$, where $x_1,\dots,x_k$ are variables. Write the word $\mathbf{w}$ as $\mathbf{w}=\mathbf{w}_0 x_1 \mathbf{v}_1$ with $x_1\notin\alf(\mathbf{w}_0)$ so that the designated occurrence of the variable $x_1$ is  the leftmost occurrence of this variable in $\mathbf{w}$. Then write $\mathbf{v}_1$ as $\mathbf{v}_1=\mathbf{w}_1 x_2 \mathbf{v}_2$ with $x_2\notin\alf(\mathbf{w}_1)$, and so on. After $k$ steps, we get the following $k$ decompositions:
\begin{gather*}
\begin{aligned}
\mathbf{w}&=\mathbf{w}_0 x_1 \mathbf{v}_1,&\quad x_1\notin&\alf(\mathbf{w}_0),\\
\mathbf{v}_1&=\mathbf{w}_1 x_2 \mathbf{v}_2,&\quad x_2\notin&\alf(\mathbf{w}_1),\\
\mathbf{v}_2&=\mathbf{w}_2 x_3 \mathbf{v}_3,&\quad x_3\notin&\alf(\mathbf{w}_2),\\
            &\ \ \vdots                    &               &\vdots\\
\mathbf{v}_{k-1}&=\mathbf{w}_{k-1} x_k \mathbf{v}_k,&\quad x_k\notin&\alf(\mathbf{w}_{k-1}).
\end{aligned}
\end{gather*}
Combining these equalities and renaming $\mathbf{v}_k$ into $\mathbf{w}_k$, we decompose $\mathbf{w}$ as follows:
\begin{equation}
\label{eq:firstocc}
\mathbf{w}= \mathbf{w}_0 x_1 \mathbf{w}_1\cdots \mathbf{w}_{k-1}x_k\mathbf{w}_k,
\end{equation}
where $x_i\notin\alf(\mathbf{w}_{i-1})$ for all $i=1,\dots,k$.

Denote by $p_i$ the number of the position occupied by the variable $x_i$ in the representation \eqref{eq:firstocc}. We have assumed that $\mathbf{u}$ occurs as a scattered subword of $\mathbf{w}$ also in a way different from \eqref{eq:firstocc}. Fix such an alternative occurrence of $\mathbf{u}$ and denote by $q_i$ the number of the position occupied by the variable $x_i$ in the representation
\begin{equation}
\label{eq:altocc}
\mathbf{w}= \mathbf{w}'_0 x_1 \mathbf{w}'_1\cdots \mathbf{w}'_{k-1}x_k\mathbf{w}'_k,
\end{equation}
corresponding to this alternative occurrence. By the definition, $p_1<\cdots<p_k$ and $q_1<\dots<q_k$. Our construction of \eqref{eq:firstocc} ensures that $p_i\le q_i$ for all $i=1,\dots,k$, and for some $j$, we have $p_j<q_j$ since \eqref{eq:altocc} and \eqref{eq:firstocc} differ.

Using backward induction on $j$, we prove that for some $i$, the variable $x_i$ occurs in the word $\mathbf{w}_i$. Indeed, for $j=k$ the inequality $p_k<q_k$ implies that $x_k$ occurs in the word $\mathbf{w}_k$. Suppose that $j<k$. If $x_j$ occurs in $w_j$, then our claim holds. Otherwise, $p_{j+1}\le q_j<q_{j+1}$ and the induction assumption applies.

Fix an $i$ such that $x_i$ occurs in $\mathbf{w}_i$, that is, between the occurrences of $x_i$ and $x_{i+1}$ designated in \eqref{eq:firstocc} or, if $i=k$, after the occurrence of $x_k$ designated in \eqref{eq:firstocc}.
This means that the word $x_1\cdots  x_ix_ix_{i+1}\cdots x_k$ (or the word $x_1\cdots x_kx_k$ if $i=k$) of length $k+1$ is a scattered subword in $\mathbf{w}$. Since the identity $\mathbf{w}\bumpeq\mathbf{w'}$ lies in $J_{m+1}$,  the words $\mathbf{w}$ and $\mathbf{w}'$ share scattered subwords of length $k+1\le m+1$. Hence, $x_1\cdots x_ix_ix_{i+1}\cdots x_k$ (or $x_1\cdots x_kx_k$ if $i=k$) is a scattered subword in $\mathbf{w}'$. However, the word $\mathbf{u}=x_1\cdots x_k$ is not unambiguously scattered in $x_1\cdots x_ix_ix_{i+1}\cdots x_k$ (nor in $x_1\cdots x_kx_k$), whence $\mathbf{u}$ is not unambiguously scattered in $\mathbf{w}'$, a contradiction.

It remains to consider the case where the condition (ii) holds for $\mathbf{w}\bumpeq\mathbf{w}'$, but the condition (iii) fails. Then $\mathbf{w}$ and $\mathbf{w}'$ share unambiguously scattered subwords of length up to $m$, but for some $k\le m$ and some unambiguously scattered subword $\mathbf{u}=x_1\cdots x_k$ of $\mathbf{w}$ and $\mathbf{w}'$, there exists some index $i\in\{0,\dots,k\}$ for which $\alf(\mathbf{w}_i)\ne\alf(\mathbf{w}'_i)$ where $\mathbf{w}_0, \mathbf{w}_1,\dots,\mathbf{w}_k,\mathbf{w}'_0, \mathbf{w}'_1,\dots,\mathbf{w}'_k$ come from the decompositions
\begin{align*}
\mathbf{w}&= \mathbf{w}_0 x_1 \mathbf{w}_1\cdots \mathbf{w}_{k-1}x_k\mathbf{w}_k,\\
\mathbf{w}'&= \mathbf{w}'_0 x_1 \mathbf{w}'_1\cdots \mathbf{w}'_{k-1}x_k\mathbf{w}'_k.
\end{align*}
For certainty, assume that there is a variable $t\in\alf(\mathbf{w}_i)\setminus\alf(\mathbf{w}'_i)$. Then the word
\[
\mathbf{v}=\begin{cases} tx_1\cdots x_k&\text{if \ $i=0$,}\\
x_1\cdots x_itx_{i+1}\cdots x_k&\text{if \ $0<i<k$,}\\
x_1\cdots x_kt&\text{if \ $i=k$}
\end{cases}
\]
has length $k+1$ and is a scattered subword of the word $\mathbf{w}$. Recall that $\mathbf{w}$ and $\mathbf{w}'$ share scattered subwords of length $k+1\le m+1$ whence $\mathbf{v}$ occurs as a scattered subword also in $\mathbf{w}'$. Since $\mathbf{u}$ is unambiguously scattered in $\mathbf{w}'$, the positions of the variables $x_1,\dots,x_k$ in $\mathbf{w}'$ are uniquely fixed. Therefore, when $\mathbf{v}$ is scattered over $\mathbf{w}'$, the   occurrence of $t$ must happen within the part of $\mathbf{w}'$ determined by the neighbor(s) of $t$ in $\mathbf{v}$, that is, within $\mathbf{w}'_i$. This contradicts the assumption $t\notin\alf(\mathbf{w}'_i)$.

We have proved the inclusion $J_{m+1}\subseteq U_m$ for each $m\ge0$. To show that it is strict if $m>0$, consider the identity
\begin{equation}
\label{eq:idcommute}
x^{m+1}y^{m+1}\bumpeq y^{m+1}x^{m+1}.
\end{equation}
The identity belongs to the set $U_m$ since the words $x^{m+1}y^{m+1}$ and $y^{m+1}x^{m+1}$ involve the same variables and have no unambiguously scattered subwords of length $\le m$. On the other hand, the word $xy$ is a scattered subword in $x^{m+1}y^{m+1}$ but not in $y^{m+1}x^{m+1}$, whence \eqref{eq:idcommute} is not in $J_2$, and therefore, in no $J_{m+1}$ with $m>0$.
\end{proof}

In view of Propositions~\ref{prop:JMC} and~\ref{prop:UMPIC}, translating Lemma~\ref{lem:inclusion} into the language of varieties yields the following fact useful for applications of our main result:
\begin{proposition}\label{prop:inclusion}
$\var IC_1 = \var C_2$ and $\var IC_m \subset\var C_{m+1}$ for all $m \ge2$.
\end{proposition}

We mention that for $m=2$, the result of Proposition~\ref{prop:inclusion} is known; see \cite{Lee08} where the 5-element monoid isomorphic to $C_3$ appears under the name $A_0^1$ while the 5-element monoid isomorphic to $IC_2$ bears the name $B_0^1$.

The final auxiliary fact we need deals with identities of finite $\mR$-trivial monoids. It is an immediate combination of \cite[Lemma 5.2]{BrFi80} and \cite[Lemma 3]{Si75}.

\begin{proposition}\label{prop:iden}
Let $M$ be an $\mR$-trivial monoid and $|M|=m$. Then $M$ satisfies any identity $\mathbf{u}\bumpeq\mathbf{uv}$ such that the word $\mathbf{u}$ can be decomposed as $\mathbf{u} = \mathbf{u}_1 \mathbf{u}_2\cdots \mathbf{u}_m$ with
$\alf(\mathbf{u}_1) \supseteq \alf(\mathbf{u}_2)\supseteq \cdots \supseteq \alf(\mathbf{u}_m)\supseteq \alf(\mathbf{v})$.
\end{proposition}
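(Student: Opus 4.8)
The plan is to verify the identity directly. Fix an arbitrary substitution $\varphi\colon\alf(\mathbf{uv})\to M$, write $a_i=\varphi(\mathbf{u}_i)$ for $i=1,\dots,m$ and $b=\varphi(\mathbf{v})$, and set $P_i=a_1\cdots a_i$, so that $P_0$ is the identity of $M$ and $P_m=\varphi(\mathbf{u})$. Since $\varphi(\mathbf{uv})=P_mb$, it suffices to prove $P_mb=P_m$. The whole argument rests on analysing the descending chain of principal right ideals $P_0M\supseteq P_1M\supseteq\cdots\supseteq P_mM$ and then exploiting the nested-alphabet hypothesis to absorb $b$.

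First I would record the basic consequence of $\mR$-triviality. In the monoid $M$ the principal right ideal generated by $a$ is $aM$, and since $a=a\cdot1\in aM$, the premise of the implication \eqref{eq:rtriv} reads $aM=bM$; hence $a\mapsto aM$ is injective and $M$ has at most $m=|M|$ distinct principal right ideals. Applying the pigeonhole principle to the $m+1$ ideals $P_0M,\dots,P_mM$ of the descending chain therefore yields an index $i$ (with $0\le i\le m-1$) such that $P_iM=P_{i+1}M$.

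The key step is to convert this single coincidence into pointwise information about $P_i$. Reading the block $\mathbf{u}_{i+1}$ one variable at a time produces intermediate products whose right ideals form a descending chain starting at $P_iM$ and ending at $P_{i+1}M=P_iM$; a descending chain that starts and ends at the same set is constant, so by $\mR$-triviality every intermediate product equals $P_i$. In particular $P_i\varphi(x)=P_i$ for every $x\in\alf(\mathbf{u}_{i+1})$. I expect this to be the crux: it says that the whole set $\{\varphi(x):x\in\alf(\mathbf{u}_{i+1})\}$ lies in the right stabiliser $R=\{t\in M:P_it=P_i\}$, which contains the identity and is closed under multiplication, hence is a submonoid of $M$.

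Finally I would cash in the nestedness. Because $\alf(\mathbf{u}_{i+1})\supseteq\alf(\mathbf{u}_{i+2})\supseteq\cdots\supseteq\alf(\mathbf{u}_m)\supseteq\alf(\mathbf{v})$, each of the words $\mathbf{u}_{i+1},\dots,\mathbf{u}_m,\mathbf{v}$ is a product of variables whose $\varphi$-values lie in $R$; as $R$ is a submonoid, this gives $a_{i+1},\dots,a_m,b\in R$. Consequently $P_m=P_i(a_{i+1}\cdots a_m)=P_i$ and $P_mb=P_ib=P_i=P_m$, which is exactly the required equality. The only genuinely delicate point is the constancy-of-the-chain argument inside a single block; everything else is bookkeeping with the inclusions of alphabets.
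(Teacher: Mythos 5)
Your proof is correct, and every step checks out: the map $a\mapsto aM$ is injective by \eqref{eq:rtriv} (using $a=a\cdot 1\in aM$ in a monoid), so the pigeonhole argument on the $m+1$ ideals $P_0M\supseteq\cdots\supseteq P_mM$ does produce a consecutive coincidence $P_iM=P_{i+1}M$; reading the block $\mathbf{u}_{i+1}$ letter by letter then traps all intermediate right ideals between two equal ones, and $\mR$-triviality collapses the intermediate products onto $P_i$, which is exactly what is needed to place the values of all variables of $\alf(\mathbf{u}_{i+1})$ in the stabiliser submonoid $R=\{t\in M: P_it=P_i\}$; the nested alphabets then put $a_{i+1},\dots,a_m,b$ in $R$ and force $P_mb=P_i=P_m$. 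It is worth noting that the paper does not prove this proposition at all: it is stated as ``an immediate combination'' of two results cited from the literature (Lemma 5.2 of Brzozowski--Fich and Lemma 3 of Simon), so there is no in-paper argument to compare against step by step. Your write-up in effect reconstructs, from scratch, the classical technique underlying those cited lemmas --- pigeonhole on a descending chain of principal right ideals plus a stabiliser argument --- and what it buys is self-containedness: the proposition becomes provable directly from the definition \eqref{eq:rtriv} with no appeal to external sources. The one point to state explicitly if you polish this into prose is the (easy) fact that a consecutive coincidence $P_iM=P_{i+1}M$ can indeed be extracted from an arbitrary coincidence $P_jM=P_kM$, $j<k$, because the chain is descending; you use this silently when you say the pigeonhole ``yields an index $i$ with $P_iM=P_{i+1}M$.''
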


\section{Main result}
\label{sec:main1}

Recall from Section~\ref{sec:intro1} that an inherently \nfb\ semigroup is not contained in any \fb\ variety generated by a finite semigroup. It follows from Mark Sapir's characterization of inherently \nfb\ semigroups in~\cite{Sa87b}, that no $\mJ$- or $\mR$-trivial semigroup can posses this property. Our main and only theorem is that the $i$-Catalan monoid $IC_4$ has an albeit weaker but similar feature.

\begin{theorem}
\label{thm:C5}
The $i$-Catalan monoid $IC_4$ is not contained in any \fb\ variety generated by a finite $\mR$-trivial semigroup.
\end{theorem}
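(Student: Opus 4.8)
The plan is to establish the theorem in the following contrapositive and slightly sharper form: \emph{every} finite $\mR$-trivial semigroup $S$ with $IC_4\in\var S$ is \nfb. Since $S$ and $\var S$ are simultaneously finitely or \nfb, this is exactly what is needed. The starting reduction is Proposition~\ref{prop:UMPIC}: the containment $IC_4\in\var S$ means $\Id S\subseteq\Id IC_4=U_3$, so in particular every identity of any hypothetical finite \ib\ of $S$ lies in $U_3$. At the same time, passing if necessary from $S$ to the monoid $S^1$ (again finite and $\mR$-trivial, and whose identities hold in $S$), Proposition~\ref{prop:iden} guarantees a large supply of identities that $S$ must satisfy, namely all \emph{stabilization identities} $\mathbf u\bumpeq\mathbf u t$ in which $\mathbf u$ factors as $\mathbf u_1\cdots\mathbf u_{m'}$ with $m'=|S^1|$ and $\alf(\mathbf u_1)\supseteq\cdots\supseteq\alf(\mathbf u_{m'})\ni t$. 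It is worth stressing that the finiteness of a basis is genuinely needed: $IC_4$ is itself $\mJ$-trivial, hence $\mR$-trivial, and lies in $\var IC_4$, so the statement can only hold because $IC_4$ is \nfb\ (Proposition~\ref{prop:FBP}(c)). The argument must therefore exploit the existence of a finite basis in an essential way.

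Second, for each $n$ I would exhibit a concrete stabilization identity $\sigma_n$ that genuinely involves at least $n$ variables. A natural candidate is $\sigma_n\colon\ \mathbf u_n\bumpeq\mathbf u_n t$, where $\mathbf u_n=(t\,x_1\,t\,x_2\,t\cdots t\,x_n\,t)\,t^{\,m'-1}$ with $m'=|S^1|$. Taking the parenthesized word as the first block and each of the trailing $m'-1$ copies of $t$ as a further block exhibits the decreasing‑alphabet shape of Proposition~\ref{prop:iden} (the first block has alphabet $\{t,x_1,\dots,x_n\}$, all later blocks the alphabet $\{t\}$), so $\sigma_n\in\Id S$. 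Because $t$ occurs many times in $\mathbf u_n$ while each $x_i$ occurs exactly once, appending one more $t$ changes neither the unambiguously scattered subwords of length $\le 3$ (these are precisely the strings $x_i$, $x_ix_j$, $x_ix_jx_k$ with increasing indices) nor the block‑alphabets attached to them; hence $\sigma_n\in U_3$, as it must be. The real purpose of the construction is the presence of the $n$ separating letters $x_1,\dots,x_n$.

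Third, and this is the crux, I would prove that $\sigma_n$ does not follow from the set $U_3^{<n}$ of those identities in $U_3$ that involve fewer than $n$ variables. Granting this, the proof closes quickly. If $S$ had a finite \ib\ $\Sigma$, then $\Sigma\subseteq\Id S\subseteq U_3$ and $\Sigma$ would involve fewer than some fixed $n_0$ variables, so $\Sigma\subseteq U_3^{<n}$ for every $n\ge n_0$. For such an $n$ the identity $\sigma_n\in\Id S$ would follow from $\Sigma$, hence from $U_3^{<n}$, contradicting the claimed non‑derivability. To establish the non‑derivability I would, for each $n$, construct a finite semigroup $A_n$ that satisfies every identity of $U_3^{<n}$ but fails $\sigma_n$ under the substitution fixing the displayed variables; since satisfaction of a set of identities is inherited by all its substitution instances and consequences, exhibiting such an $A_n$ shows that $\sigma_n$ is not a consequence of $U_3^{<n}$.

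The main obstacle is precisely this separating family $A_n$. The semigroups $A_n$ must be sensitive to the variable count: they have to absorb the trailing $t$ incorrectly only in the presence of all $n$ separators $x_1,\dots,x_n$, while remaining faithful to every $(<n)$-variable identity of $IC_4$, including the single‑letter absorptions such as $t^4\bumpeq t^5$ that already lie in $U_3$. What makes this plausible is that $U_3$ contains no commutation identity allowing a $t$ to be moved across an $x_i$, so a trailing $t$ cannot be merged with the other occurrences of $t$ using few variables; isolating a word invariant that formalizes this — one preserved by all deductions of bounded width yet distinguishing $\mathbf u_n$ from $\mathbf u_n t$ — is where the interplay between $\mR$-triviality (which forces the absorption $\sigma_n$) and the length‑$3$ unambiguity conditions defining $U_3$ is decisive, and it is the step I expect to demand the most care.
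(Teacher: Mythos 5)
Your overall template (critical identities $\sigma_n$ in many variables, plus separating algebras $A_n$ witnessing that $\sigma_n$ does not follow from the identities of $U_3$ in fewer than $n$ variables) is a legitimate strategy and differs from the paper's route, but your proposal breaks down before the admitted gap is even reached: the non-derivability claim at your ``crux'' is \emph{false} for the words you chose, not merely unproven. Write $m'=|S^1|$ and note that your word is $\mathbf{u}_n=t x_1 t x_2 t\cdots t x_n\,t^{m'}$. Consider the two-variable identity $y t^{m'}\bumpeq y t^{m'+1}$. It holds in $S$: apply Proposition~\ref{prop:iden} to the monoid $S^1$ and the decomposition of $y t^{m'}$ into the $m'$ blocks $(yt)\cdot t\cdots t$, whose alphabets satisfy $\{y,t\}\supseteq\{t\}\supseteq\cdots\supseteq\{t\}\supseteq\alf(t)$. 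Moreover, it lies in $U_3$, since $\Id S\subseteq\Id IC_4=U_3$ under the theorem's hypothesis. Now the substitution $y\mapsto t x_1 t\cdots t x_n$, $t\mapsto t$ turns this identity into exactly $\sigma_n\colon\mathbf{u}_n\bumpeq\mathbf{u}_n t$. Hence for every $n\ge 3$ your $\sigma_n$ follows from a \emph{single} identity lying in $U_3^{<n}$ (indeed in $\Id S$ with two variables), so no semigroup $A_n$ of the kind you postulate can exist, and a finite subset of $\Id S$ containing this one identity derives all your $\sigma_n$. The linear letters $x_1,\dots,x_n$ separate nothing, because a substitution may swallow the whole prefix $t x_1 t\cdots t x_n$ into the image of one variable.

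What your construction misses---and what is the actual content of the paper's proof---is that the critical words must be chosen so that \emph{every} realization of them as a substitution instance of a word in fewer than $n$ variables produces an isoterm for $S$; your $\mathbf{u}_n$ fails this badly, being an instance of $y t^{m'}$, which is not an isoterm for $S$. The paper's words $\mathbf{u}_n(m)$, built by iterating the fresh-variable insertion map $f$, are engineered to satisfy (P1) (no two-letter factor occurs more than once) and (P2) (at least $n$ distinct letters between any two occurrences of a letter); your tail $t^{m'}$ violates both. By Lemma~\ref{lem:sparse}, (P1) and (P2) force every preimage word with fewer than $n$ variables to be sparse, and by Lemma~\ref{lem:sparseC5} sparse words are isoterms for $IC_4$, hence for every $S$ with $IC_4\in\var S$. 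Feeding this into Sapir's criterion (Proposition~\ref{prop:nfbcor}) finishes the proof with no need to construct separating algebras at all---the derivation-theoretic work your plan defers to the unconstructed family $A_n$ is precisely what Proposition~\ref{prop:nfbcor} packages. (A minor further inaccuracy: the unambiguously scattered subwords of length $\le 3$ of your $\mathbf{u}_n$ are not only the increasing words in the $x_i$'s; for instance $x_i t x_{i+1}$ and $t x_1$ are unambiguously scattered as well. This is inconsequential, since $\sigma_n\in U_3$ follows anyway from $\sigma_n\in\Id S\subseteq U_3$, but it signals that the combinatorics of $U_3$ needs more care than the proposal exercises.)
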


We express this result by saying that the monoid $IC_4$ is \emph{inherently nonfinitely based relative to finite $\mR$-trivial semigroups}. The idea of relativizing the property of being inherently \nfb, suggested by Jackson and the second-named author~\cite{JaVo09} in the context of quasivarieties, was motivated by the fact (discovered by Margolis and Mark Sapir~\cite{MaSa95}) that every finite semigroup lies in a locally finite finitely based quasivariety. This means that if one attempts to literally transfer the notion of an inherently \nfb\ semigroup to the quasivariety setting by calling a finite semigroup $S$ \emph{inherently nonfinitely $q$-based} if $S$ is not contained in any locally finite finitely based quasivariety, then the resulting notion would be void. However, relativized versions of this notion make perfect sense, and their study in~\cite{JaVo09} led to a number of interesting results.

Back to the realm of varieties, a finite semigroup $S$ is called \emph{weakly \fb} if $S$ is not inherently \nfb. To the best of our knowledge, Theorem~\ref{thm:C5} gives the first example of a weakly \fb\ semigroup that is  inherently \nfb\ relative to a large and important class of finite semigroups.

Proving Theorem~\ref{thm:C5} amounts to showing that if $IC_4\in\var S$ where $S$ is a finite $\mR$-trivial semigroup, then $S$ is \nfb. For this, we employ a sufficient condition under which a semigroup is nonfinitely based from the first-named author's paper~\cite{Sa15}. Given a semigroup $S$, a word $\mathbf{u}$ is called an \emph{isoterm for} $S$ if the only word $\mathbf{v}$ such that $S$ satisfies the identity $\mathbf{u} \bumpeq  \mathbf{v}$ is the word $\mathbf{u}$ itself. We fix a countably infinite set $\mathfrak A$ of variables and denote by $\mathfrak A^+$ the set of all words whose variables lie in $\mathfrak A$. The set $\mathfrak A^+$ forms a semigroup under concatenation of words. We assume that all nonempty words that we encounter below come from $\mathfrak A^+$.

\begin{proposition}[{\!\mdseries\cite[Corollary 2.2]{Sa15}}]
\label{prop:nfbcor} A semigroup $S$ is \nfb\ whenever for infinitely many $n$, there exists a word $\mathbf{u}_n$ with the following properties:
\begin{itemize}
  \item[\emph{1)}] $|\alf(\mathbf{u}_n)|\ge n$ and $\mathbf{u}_n$ is not an isoterm for $S$;
  \item[\emph{2)}] if a word $\mathbf{u}$ with $|\alf(\mathbf{u})|<n$ is such that $\vartheta(\mathbf{u})=\mathbf{u}_n$ for some substitution $\vartheta\colon \mathfrak A \rightarrow \mathfrak A ^+$, then $\mathbf{u}$ is an isoterm for $S$.
\end{itemize}
\end{proposition}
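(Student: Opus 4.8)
The plan is to argue by contraposition: assuming $S$ is finitely based, I derive a contradiction. So suppose $\Id S$ is axiomatized by a finite set $\Sigma\subseteq\Id S$, and let $N$ be the maximum number of distinct variables occurring in an identity of $\Sigma$. Using the hypothesis, I fix some $n>N+2$ for which a word $\mathbf{u}_n$ satisfying 1) and 2) exists (possible since $\Sigma$, hence $N$, is fixed while infinitely many $n$ are available). Since $\mathbf{u}_n$ is not an isoterm, there is a word $\mathbf{v}\ne\mathbf{u}_n$ with $S$ satisfying $\mathbf{u}_n\bumpeq\mathbf{v}$; as $\Sigma$ axiomatizes $\Id S$, this identity follows from $\Sigma$. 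By Birkhoff's completeness theorem, $\mathbf{u}_n\bumpeq\mathbf{v}$ admits a derivation, that is, a finite chain of words $\mathbf{u}_n=\mathbf{w}_0,\mathbf{w}_1,\dots,\mathbf{w}_\ell=\mathbf{v}$ in which each $\mathbf{w}_{j+1}$ arises from $\mathbf{w}_j$ by a single \emph{elementary step}: replacing a factor of the form $\psi(\mathbf{s})$ by $\psi(\mathbf{t})$, where $(\mathbf{s}\bumpeq\mathbf{t})\in\Sigma$ (possibly read backwards) and $\psi$ is a substitution. The whole argument then reduces to one local claim: \emph{no elementary step applied to $\mathbf{u}_n$ can alter it}. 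Granting this claim, every $\mathbf{w}_j$ equals $\mathbf{u}_n$ by an immediate induction on $j$, contradicting $\mathbf{w}_\ell=\mathbf{v}\ne\mathbf{u}_n$.

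To prove the local claim, I consider an elementary step that writes $\mathbf{u}_n=\mathbf{a}\,\psi(\mathbf{s})\,\mathbf{b}$ and replaces the displayed factor to produce $\mathbf{a}\,\psi(\mathbf{t})\,\mathbf{b}$. I pick two variables $a,b$ not occurring in $\mathbf{s}$ or $\mathbf{t}$ and set $\mathbf{u}:=a\,\mathbf{s}\,b$, dropping $a$ if the prefix $\mathbf{a}$ is empty and dropping $b$ if the suffix $\mathbf{b}$ is empty. Then $\alf(\mathbf{u})\subseteq\{a,b\}\cup\alf(\mathbf{s})$, so $|\alf(\mathbf{u})|\le N+2<n$, and the substitution $\vartheta$ given by $\vartheta(a)=\mathbf{a}$, $\vartheta(b)=\mathbf{b}$, and $\vartheta(x)=\psi(x)$ for $x\in\alf(\mathbf{s})$ sends $\mathbf{u}$ to $\mathbf{u}_n$, with all values nonempty precisely because of the convention on dropping $a$ and $b$. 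Hence property 2) applies and $\mathbf{u}$ is an isoterm for $S$. On the other hand, $S$ satisfies $\mathbf{s}\bumpeq\mathbf{t}$ (since $\Sigma\subseteq\Id S$), so multiplying through by $a$ on the left and $b$ on the right shows that $S$ satisfies $\mathbf{u}\bumpeq a\,\mathbf{t}\,b$. As $\mathbf{u}$ is an isoterm, this forces $a\,\mathbf{t}\,b=a\,\mathbf{s}\,b$, and cancelling the extremal letters gives $\mathbf{s}=\mathbf{t}$ as words. Consequently $\psi(\mathbf{s})=\psi(\mathbf{t})$, so the step leaves $\mathbf{u}_n$ unchanged, establishing the local claim.

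The step I expect to be the crux is the passage from the isoterm property of the template $\mathbf{u}=a\,\mathbf{s}\,b$ to the word-equality $\mathbf{s}=\mathbf{t}$: this is what converts mere satisfaction of $\mathbf{s}\bumpeq\mathbf{t}$ in $S$ into the syntactic triviality of that identity on the matched factor, and it is the only place where property 2) and the bound $n>N+2$ are used together. The remaining points are routine bookkeeping but must be handled with care: treating the degenerate cases in which $\mathbf{a}$ or $\mathbf{b}$ is empty so that $\vartheta$ stays inside $\mathfrak{A}^+$, allowing the applied identity to be read in either direction, and noting that property 1)'s requirement $|\alf(\mathbf{u}_n)|\ge n$ is exactly what keeps the two hypotheses mutually consistent—without it, $\mathbf{u}_n$ itself would be an admissible template in 2) and would be forced to be an isoterm, contradicting 1).
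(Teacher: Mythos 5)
Your proof is correct. The paper itself gives no proof of this proposition---it is quoted from \cite[Corollary 2.2]{Sa15}---but your argument is essentially the standard one behind that result: assume a finite basis $\Sigma$ in at most $N$ variables, pick $n>N+2$, take a derivation of a nontrivial identity $\mathbf{u}_n\bumpeq\mathbf{v}$ as a chain of elementary factor-replacement steps, and kill each step by encoding its context with fresh letters to get a template $\mathbf{u}=a\,\mathbf{s}\,b$ with $|\alf(\mathbf{u})|\le N+2<n$ mapping onto $\mathbf{u}_n$, which property 2) forces to be an isoterm, whence $\mathbf{s}=\mathbf{t}$ and the step is syntactically trivial; your treatment of the degenerate empty-context cases (needed because substitutions take values in $\mathfrak{A}^+$) and of identities applied in either direction is also handled correctly.
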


We proceed with constructing a two-parameter family of words $\{\mathbf{u}_n(m)\}_{n,m\ge 1}$ that we need to apply Proposition~\ref{prop:nfbcor} to finite $\mR$-trivial semigroups. Define a map $f\colon\mathfrak A^+\rightarrow \mathfrak A ^+$ as follows. For each $\mathbf{u}\in\mathfrak A ^+$, let $f(\mathbf{u})$ be the word obtained by inserting a `fresh' variable (that is, a variable not in $\alf(\mathbf{u})$) between each pair of adjacent variables in $\mathbf{u}$ and by adding two `fresh' variables before the first and after the last variables of $\mathbf{u}$. (For example, $f(z_1 z_2 z_3) = p_0z_1 p_1 z_2 p_2 z_3 p_3$ where all variables $p_0,p_1,p_2,p_3$ are distinct.) Further, let $f^{k}(\mathbf{u})=\underbrace{f(f(\dots f}_{\text{$k$ times}}(\mathbf{u})\dots))$.

For each $n\ge1$, let $\mathbf{y}_n=y_1 y_2 \cdots y_n$, where $y_1,y_2,\dots,y_n$ are distinct variables. For each $m\ge 1$, we define
\begin{equation}
\label{eq:construct}
\mathbf{u}_n(m)=xf^{m-1}(\mathbf{y}_n)x f^{m-2}(\mathbf{y}_n) \cdots xf(\mathbf{y}_n) x\mathbf{y}_n,
\end{equation}
where the variable $x$ does not occur in the word $f^{m-1}(\mathbf{y}_n)$. Then the following two properties of the words $\mathbf{u}_n(m)$ readily follow from the construction \eqref{eq:construct}:
\begin{itemize}
  \item[(P1)] For all $y,z\in\mathfrak A$, the word $yz$ occurs in $\mathbf{u}_n(m)$ as a factor\footnote{An occurrence of a word $\mathbf{u}$ in a word $\mathbf{w}$ as a \emph{factor} is any decomposition of the form $\mathbf{w}=\mathbf{v}'\mathbf{u}\mathbf{v}''$ where the words $\mathbf{v}',\mathbf{v}''$ may be empty. If such a decomposition of $\mathbf{w}$ is unique, then we say that the factor $\mathbf{u}$ occurs in $\mathbf{w}$ once; otherwise, $\mathbf{u}$ occurs in $\mathbf{w}$ more than once.} at most once.
  \item[(P2)] For every $z\in\mathfrak A$, there are at least $n$ pairwise distinct variables between any two occurrences of $z$ in $\mathbf{u}_n(m)$.
\end{itemize}

If a variable occurs exactly once in a word $\mathbf{u}$, then the variable is called \emph{linear} in $\mathbf{u}$. If a variable occurs more than once in $\mathbf{u}$, then we say that the variable is \emph{repeated} in $\mathbf{u}$. A word $\mathbf{u}$ is called \emph{sparse} if every two occurrences of a repeated variable in $\mathbf{u}$ sandwich some linear variable.

\begin{lemma}
\label{lem:sparse}
Suppose that a word $\mathbf{u}$ with $|\alf(\mathbf{u})|<n$ is such that $\vartheta(\mathbf{u})=\mathbf{u}_n(m)$ for some $m\ge 1$ and some substitution $\vartheta\colon \mathfrak A \rightarrow \mathfrak A ^+$. Then $\mathbf{u}$ is sparse.
\end{lemma}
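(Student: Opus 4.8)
The plan is to argue by contradiction: assume that $\mathbf{u}$ is not sparse, so that some repeated variable $t$ of $\mathbf{u}$ has two consecutive occurrences with no linear variable between them. Reading $\mathbf{u}$ as a sequence of variable-occurrences and writing $\vartheta(\mathbf{u})$ as the concatenation of the corresponding image-blocks, this means there is a factorization $\mathbf{u} = \mathbf{a}\, t\, \mathbf{b}\, t\, \mathbf{c}$ in which every variable occurring in $\mathbf{b}$ is repeated in $\mathbf{u}$. The corresponding factor of $\mathbf{u}_n(m) = \vartheta(\mathbf{u})$ is $\vartheta(t)\vartheta(\mathbf{b})\vartheta(t)$, and the goal is to show that this factor is incompatible with properties (P1) and (P2).

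The crucial preliminary step is to prove that $\vartheta(r)$ is a single variable for every repeated variable $r$ of $\mathbf{u}$. Indeed, since $r$ occurs at least twice in $\mathbf{u}$, the word $\vartheta(r)$ appears as a block of $\vartheta(\mathbf{u}) = \mathbf{u}_n(m)$ in at least two distinct positions. If $\vartheta(r)$ had length $\ge 2$, it would contain a two-letter factor $yz$, and this factor would then occur at (at least) two distinct positions of $\mathbf{u}_n(m)$ --- one inside each of the two blocks --- contradicting (P1). As $\vartheta$ takes values in $\mathfrak{A}^+$, the image $\vartheta(r)$ is nonempty, hence of length exactly one.

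With this in hand the contradiction is quick. Since $t$ and all variables of $\mathbf{b}$ are repeated, their $\vartheta$-images are single variables, so $\vartheta(t)\vartheta(\mathbf{b})\vartheta(t)$ has the form $c\,\mathbf{w}\,c$, where $c = \vartheta(t)$ is a single variable and $\mathbf{w} = \vartheta(\mathbf{b})$. This exhibits two occurrences of the variable $c$ in $\mathbf{u}_n(m)$, and the variables occurring strictly between them are exactly the images of the (repeated) variables of $\mathbf{b}$. Each such image is one of at most $|\alf(\mathbf{u})| < n$ single variables, so fewer than $n$ distinct variables occur between the two occurrences of $c$ --- contradicting (P2), which guarantees at least $n$. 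This contradiction shows that $\mathbf{u}$ must be sparse.

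I expect the only real content to be the preliminary step (repeated variables map to single letters); once (P1) forces this, properties of the image reduce cleanly to properties of $\mathbf{u}$, and the count $|\alf(\mathbf{u})| < n$ together with (P2) finishes things off. The main point to be careful about is purely notational: keeping track of the identification of occurrences in $\mathbf{u}$ with blocks in $\vartheta(\mathbf{u})$, and verifying that the two block-occurrences of $\vartheta(r)$ (respectively the two occurrences of $c$) genuinely sit at distinct positions of $\mathbf{u}_n(m)$, so that (P1) and (P2) truly apply.
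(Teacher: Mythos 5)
Your proof is correct and follows essentially the same route as the paper's: use (P1) to force $\vartheta(r)$ to be a single variable for every repeated variable $r$ of $\mathbf{u}$, then derive a contradiction with (P2) from a factorization $\mathbf{u}=\mathbf{a}\,t\,\mathbf{b}\,t\,\mathbf{c}$ in which $\mathbf{b}$ contains no linear variable. The only difference is a slight streamlining: the paper chooses the two occurrences of $t$ at minimal distance so that the variables of $\mathbf{b}$ are pairwise distinct and then bounds the length of $\vartheta(\mathbf{b})$, whereas you bound the number of \emph{distinct} variables in $\vartheta(\mathbf{b})$ directly by $|\alf(\mathbf{u})|<n$, which suffices because (P2) counts pairwise distinct variables, so the minimality choice can be dispensed with.
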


\begin{proof}
For every repeated variable $z$ of $\mathbf{u}$, the word $\vartheta(z)$ occurs as a factor in $\vartheta(\mathbf{u})=\mathbf{u}_n(m)$ more than once. In view of the property (P1), we see that $\vartheta(z)$ must be a single variable. Now, arguing by contradiction, suppose that the word $\mathbf{u}$ is not sparse. We choose two occurrences ${_1t}$ and ${_2t}$ of a repeated variable $t$ of $\mathbf{u}$ such that
\begin{itemize}
  \item[(a)] no linear variable occurs in $\mathbf{u}$ between ${_1t}$ and ${_2t}$, and
  \item[(b)] ${_1t}$ and ${_2t}$ are at the minimum possible distance with the property (a).
\end{itemize}
Let $\mathbf{w}$ stand for the part of word $\mathbf{u}$ formed by the variables following ${_1t}$ and preceding ${_2t}$. Then either $\mathbf{w}$ is empty or all variables in $\mathbf{w}$ are repeated in $\mathbf{u}$  because of (a), and, moreover, they are pairwise distinct because of (b). If $\mathbf{w}$ is empty, then ${_1t}$ and ${_2t}$ are adjacent in $\mathbf{u}$ whence so are the corresponding occurrences of the variable $\vartheta(t)$ in $\vartheta(\mathbf{u})=\mathbf{u}_n(m)$. This contradicts the property (P2). If $\mathbf{w}$ is nonempty, then it has less than $n$ variables since the whole $\mathbf{u}$ involves less than $n$ distinct variables and the variables of $\mathbf{w}$ are all distinct. As every variable of $\mathbf{w}$ is repeated in $\mathbf{u}$, its image under $\vartheta$ is a variable. Hence there are less than $n$ variables between the two occurrences of the variable $\vartheta(t)$ in $\vartheta(\mathbf{u})=\mathbf{u}_n(m)$ that correspond to ${_1t}$ and ${_2t}$. This again contradicts the property (P2).
\end{proof}

\begin{lemma}
\label{lem:sparseC5}
Every sparse word is an isoterm for the $i$-Catalan monoid $IC_4$.
\end{lemma}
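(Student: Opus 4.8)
The plan is to invoke the description $\Id IC_4=U_3$ from Proposition~\ref{prop:UMPIC} and reduce the claim to the following assertion: if $\mathbf{u}$ is sparse and $(\mathbf{u}\bumpeq\mathbf{v})\in U_3$, then $\mathbf{v}=\mathbf{u}$. First I would record the shape of a sparse word. Sparsity forces a linear variable between any two occurrences of a repeated variable, so no repeated variable can occur twice between two \emph{consecutive} linear variables; hence, writing the linear variables of $\mathbf{u}$ in their left-to-right order as $t_1,\dots,t_r$, one obtains a decomposition $\mathbf{u}=B_0\,t_1B_1\cdots t_rB_r$ in which each block $B_i$ is a word whose letters are pairwise distinct repeated variables.

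Next I would transfer this skeleton to $\mathbf{v}$. Condition (i) of $U_3$ together with the length-$1$ unambiguously scattered subwords (condition (ii)) shows that $\mathbf{u}$ and $\mathbf{v}$ have the same linear and the same repeated variables. For $i<j$ the two-letter word $t_it_j$ is unambiguously scattered in $\mathbf{u}$, since both letters are linear; by (ii) it is then unambiguously scattered, hence occurs, in $\mathbf{v}$, which forces the linear variables to appear in $\mathbf{v}$ in the same order $t_1,\dots,t_r$. Thus $\mathbf{v}=C_0\,t_1C_1\cdots t_rC_r$ with each $C_i$ a word over repeated variables. Applying condition (iii) to the unambiguously scattered subwords $t_it_{i+1}$, and to the singletons $t_1$ and $t_r$ for the two end blocks, identifies the intervening alphabets and yields $\alf(B_i)=\alf(C_i)$ for every $i$.

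The heart of the argument is to upgrade these alphabet equalities to equalities $B_i=C_i$ of sequences. The obvious idea---to compare two block letters $z,w$ by flanking them with the anchors $t_i$ and $t_{i+1}$---fails, since $t_izwt_{i+1}$ has length $4$ while $U_3$ controls only subwords of length $\le 3$; this length ceiling is the main obstacle, and it is precisely why $IC_4$ rather than $IC_3$ is needed. I would circumvent it by processing one block letter at a time. For an interior block and $z\in\alf(B_i)$, the length-$3$ word $t_izt_{i+1}$ is unambiguously scattered in $\mathbf{u}$: the only occurrence of $z$ lying strictly between the unique occurrences of $t_i$ and $t_{i+1}$ is the one inside $B_i$, because every other occurrence of $z$ sits in some other block and is therefore separated from $B_i$ by one of these two anchors. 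By (ii) the word $t_izt_{i+1}$ is unambiguously scattered in $\mathbf{v}$ as well, so $z$ occurs exactly once in $C_i$ (whence each $C_i$ is repetition-free), and condition (iii) applied to this subword equates the alphabets of the segments between $t_i$ and $z$; that is, the set of block letters preceding $z$ in $B_i$ coincides with the set of block letters preceding $z$ in $C_i$. The two end blocks are treated identically using the length-$2$ unambiguously scattered words $zt_1$ and $t_rz$.

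Since each block is repetition-free and $B_i,C_i$ range over the same finite set, knowing for every letter the set of letters preceding it determines the arrangement uniquely; hence $B_i=C_i$ for all $i$, and therefore $\mathbf{v}=\mathbf{u}$. The only genuinely delicate points are the unambiguity verifications for $t_izt_{i+1}$, $zt_1$, and $t_rz$, all of which reduce to the observation just made that outside-block occurrences of a repeated variable are screened off by the anchoring linear variables.
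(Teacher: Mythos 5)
Your proposal is correct and follows essentially the same route as the paper's proof: the same block decomposition of $\mathbf{u}$ and $\mathbf{v}$ along the linear variables, the same use of conditions (i)--(iii) of $U_3$ via the length-$1$, length-$2$ and length-$3$ unambiguously scattered words $t_i$, $t_it_{i+1}$, $zt_1$, $t_rz$, and $t_izt_{i+1}$, and the same final ordering argument (your ``set of letters preceding $z$'' formulation is exactly what the paper extracts from condition (iii) in its Step 6, and the paper handles the end blocks by a dummy-symbol convention rather than your explicit shorter words). No gaps; the unambiguity verifications you flag as delicate are argued just as in the paper.
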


\begin{proof}
Let $\mathbf{u}$ be a sparse word and suppose that $IC_4$ satisfies $\mathbf{u}\bumpeq\mathbf{v}$ for some $\mathbf{v}$. We have to prove that $\mathbf{v}=\mathbf{u}$.

We stepwise establish more and more similarities between the words $\mathbf{u}$ and $\mathbf{v}$, eventually showing that they coincide. In doing so, we use Proposition~\ref{prop:UMPIC} which ensures that the words $\mathbf{u}$ and $\mathbf{v}$ satisfy the conditions (i)--(iii) in the definition of the set $U_m$ with $m=3$. In particular, $\alf(\mathbf{u})=\alf(\mathbf{v})$ by the condition (i).

\begin{step}
If a variable is linear in $\mathbf{u}$, then it is so in $\mathbf{v}$, and vice versa.
\end{step}

\begin{proof}
If a variable $t$ is linear in $\mathbf{u}$, then the word $t$ of length 1 is unambiguously scattered in $\mathbf{u}$. By the condition (ii) $t$ is unambiguously scattered in $\mathbf{v}$, but this means that the variable $t$ is linear in $\mathbf{v}$. The same argument proves the converse statement.
\end{proof}

\begin{step}
The linear variables occur in $\mathbf{u}$ and $\mathbf{v}$ in the same order.
\end{step}

\begin{proof}
Take any linear variables $t_1$ and $t_2$. If $t_1$ precedes $t_2$ in $\mathbf{u}$, then the subword $t_1t_2$ of length 2 is unambiguously scattered in $\mathbf{u}$. By the condition (ii) $t_1t_2$ is unambiguously scattered in $\mathbf{v}$. Thus, $t_1$ precedes $t_2$ in $\mathbf{v}$ too. The same argument proves that if $t_1$ precedes $t_2$ in $\mathbf{v}$, then it does so in $\mathbf{u}$.
\end{proof}

From Steps 1 and 2, we get the following decompositions of $\mathbf{u}$ and $\mathbf{v}$:
\begin{align}
\mathbf{u} &= \mathbf{a}_0t_1 \mathbf{a}_1t_2 \cdots t_{k-1}\mathbf{a}_{k-1} t_k \mathbf{a}_k,\label{eq:dec1}\\
\mathbf{v} &= \mathbf{b}_0t_1 \mathbf{b}_1t_2 \cdots t_{k-1}\mathbf{b}_{k-1} t_k \mathbf{b}_k,\label{eq:dec2}
\end{align}
where  $t_1, \dots, t_k$ are the linear variables of $\mathbf{u}$ and $\mathbf{v}$ and the words $\mathbf{a}_0, \mathbf{a}_1, \dots, \mathbf{a}_{k-1},\mathbf{a}_k$ and $\mathbf{b}_0, \mathbf{b}_1, \dots, \mathbf{b}_{k-1},\mathbf{b}_k$ are either empty or involve only variables that are repeated in $\mathbf{u}$ and $\mathbf{v}$. It remains to prove that $\mathbf{a}_i = \mathbf{b}_i$ for each $i=0,1,\dots,k-1,k$. In the next steps, we fix such an index $i$. In order to treat the extreme cases when $i=0$ or $i=k$ in the same way as $i=1,\dots,k-1$, we adopt the convention that $t_0$ and $t_{k+1}$ are dummy symbols meaning the absence of a variable.

\begin{step}
Every variable occurs in the word $\mathbf{a}_i$ at most once.
\end{step}

\begin{proof}
This follows from the condition that the word $\mathbf{u}$ is sparse, combined with the fact that all variables that may occur in $\mathbf{a}_i$ are repeated in $\mathbf{u}$.
\end{proof}

\begin{step}
$\alf(\mathbf{a}_i) = \alf(\mathbf{b}_i)$.
\end{step}

\begin{proof}
This follows from the condition (iii) applied to the word $t_it_{i+1}$ of length $\le2$ which is unambiguously scattered in $\mathbf{u}$ and $\mathbf{v}$.
\end{proof}

\begin{step}
Every variable occurs in the word $\mathbf{b}_i$ at most once.
\end{step}

\begin{proof}
If a variable $z$ occurs in $\mathbf{b}_i$, then by Steps 4 and 3 it occurs in $\mathbf{a}_i$ exactly once. Then the word $t_izt_{i+1}$ of length $\le3$ is unambiguously scattered in $\mathbf{u}$. The condition (ii) yields that $t_izt_{i+1}$ is unambiguously scattered in $\mathbf{v}$, and this implies that $z$ occurs in $\mathbf{b}_i$ exactly once.
\end{proof}

\begin{step}
$\mathbf{a}_i = \mathbf{b}_i$.
\end{step}

\begin{proof}
Taking into account Steps 3--5, it remains to show that the variables forming the words $\mathbf{a}_i$ and $\mathbf{b}_i$ occur in these words in the same order. Take any variables $z_1$ and $z_2$ that occur in $\mathbf{a}_i$. Suppose that $z_1$ precedes $z_2$ in $\mathbf{a}_i$. The word $t_iz_2t_{i+1}$ of length $\le3$ is unambiguously scattered in $\mathbf{u}$. By the condition (ii) $t_iz_2t_{i+1}$  is unambiguously scattered in $\mathbf{v}$ as well. Applying to this word the condition (iii), we conclude that the same variables occur between $t_i$ and $z_2$ in $\mathbf{a}_i$ and $\mathbf{b}_i$. Since the variable $z_1$ appears between $t_i$ and $z_2$ in $\mathbf{a}_i$, it does so in $\mathbf{b}_i$. Hence, $z_1$ precedes $z_2$ in $\mathbf{b}_i$. The same argument proves that if $z_1$ precedes $z_2$ in $\mathbf{b}_i$, it does so in $\mathbf{a}_i$.
\end{proof}

From Step 6 and the decompositions \eqref{eq:dec1} and \eqref{eq:dec2}, we see that $\mathbf{u} = \mathbf{v}$.
\end{proof}

If a semigroup $T$ belongs to the variety generated by a semigroup $S$, then every identity holding in the latter semigroup also holds in the former. Therefore, every word that is an isoterm for $T$ is an isoterm for $S$ as well. By this observation, Lemma~\ref{lem:sparseC5} yields the following.
\begin{cor}
\label{cor:isoterm}
Sparse words are isoterms for any semigroup $S$ with $IC_4\in\var S$.
\end{cor}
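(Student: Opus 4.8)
The plan is to deduce the corollary directly from Lemma~\ref{lem:sparseC5} by means of a general monotonicity property of isoterms along variety inclusions. First I would record the precise form of that property: if $T\in\var S$, then every identity satisfied by $S$ is also satisfied by $T$, that is, $\Id S\subseteq\Id T$. This is immediate from the definition of $\var S$ as the class of all semigroups satisfying $\Id S$, since $T$ lies in that class by hypothesis.

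Next I would transfer this from identities to isoterms, being careful about the direction of the inclusion. Suppose $\mathbf{u}$ is an isoterm for $T$ and that $S$ satisfies some identity $\mathbf{u}\bumpeq\mathbf{v}$. Then $(\mathbf{u}\bumpeq\mathbf{v})\in\Id S\subseteq\Id T$, so $T$ satisfies $\mathbf{u}\bumpeq\mathbf{v}$ as well; since $\mathbf{u}$ is an isoterm for $T$, this forces $\mathbf{v}=\mathbf{u}$. Hence $\mathbf{u}$ is an isoterm for $S$. In other words, isoterms for a member $T$ of $\var S$ are automatically isoterms for $S$ itself. The one point worth flagging is precisely this direction---isoterm-hood propagates from the member $T$ of $\var S$ up to the generator $S$, not the reverse---which is exactly what the hypothesis $IC_4\in\var S$ makes available.

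Finally I would specialize to $T=IC_4$. By Lemma~\ref{lem:sparseC5}, every sparse word is an isoterm for $IC_4$. Since $IC_4\in\var S$ by hypothesis, the transfer principle of the previous paragraph applies with $T=IC_4$, yielding that every isoterm for $IC_4$---and in particular every sparse word---is an isoterm for $S$. This completes the argument. I do not expect any genuine obstacle here: all the combinatorial content has already been absorbed into the proof of Lemma~\ref{lem:sparseC5}, and the corollary is a purely formal consequence obtained by unwinding the definitions of \emph{variety} and \emph{isoterm} in the correct direction.
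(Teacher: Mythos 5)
Your proposal is correct and follows exactly the paper's own argument: the paper deduces the corollary from Lemma~\ref{lem:sparseC5} via the same observation that $IC_4\in\var S$ gives $\Id S\subseteq\Id IC_4$, so isoterms for $IC_4$ are isoterms for $S$. Your extra care about the direction of transfer is exactly the right point to flag, and nothing more is needed.
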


Next, we show that for each $n$ and any given finite $\mR$-trivial semigroup $S$, the word $\mathbf{u}_n(m)$ with sufficiently large $m$ is not an isoterm for $S$.

\begin{lemma}
\label{lem:noisoterm}
If $S$ is a finite $\mR$-trivial semigroup, then for every $n\ge1$, the identity $\mathbf{u}_n(|S|+1)\bumpeq\mathbf{u}_n(|S|+1)x$ holds in $S$.
\end{lemma}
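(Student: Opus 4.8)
The plan is to reduce to Proposition~\ref{prop:iden}, which is stated for $\mR$-trivial \emph{monoids}, by passing to the monoid $S^1$ obtained from $S$ by adjoining an identity element (and setting $S^1=S$ if $S$ already has one). Adjoining an identity preserves $\mR$-triviality, so $S^1$ is a finite $\mR$-trivial monoid, and $|S^1|\le|S|+1$. Since $S$ is a subsemigroup of $S^1$ and the satisfaction of identities is inherited by subsemigroups, it suffices to verify the identity $\mathbf{u}_n(|S|+1)\bumpeq\mathbf{u}_n(|S|+1)x$ in $S^1$. This is precisely what motivates the parameter $|S|+1$: the argument will require a decomposition of $\mathbf{u}_n(|S|+1)$ into $|S^1|$ nested blocks, and $|S^1|$ may be as large as $|S|+1$.

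Next I would exhibit the decomposition required by Proposition~\ref{prop:iden}, taking $\mathbf{v}=x$. Writing $m=|S|$ and $B_j=xf^j(\mathbf{y}_n)$ for $j=m,m-1,\dots,0$, the construction \eqref{eq:construct} presents $\mathbf{u}_n(m+1)=B_mB_{m-1}\cdots B_1B_0$ as a product of $m+1$ blocks. The key point is that each application of $f$ retains all variables already present and only inserts fresh ones, so $\alf\bigl(f^{j-1}(\mathbf{y}_n)\bigr)\subseteq\alf\bigl(f^{j}(\mathbf{y}_n)\bigr)$ for every $j$. Since $x$ does not occur in $f^m(\mathbf{y}_n)$ and the alphabets of the $f^j(\mathbf{y}_n)$ are nested, $x$ lies in no $\alf(f^j(\mathbf{y}_n))$, whence $\alf(B_j)=\{x\}\cup\alf(f^j(\mathbf{y}_n))$ and
\[
\alf(B_m)\supseteq\alf(B_{m-1})\supseteq\cdots\supseteq\alf(B_0)\supseteq\{x\}=\alf(x).
\]
Because $|S^1|\le m+1$, I would then coarsen this chain of $m+1$ nested blocks by merging adjacent blocks into exactly $|S^1|$ consecutive groups; merging preserves the inclusions, as a merged group inherits the alphabet of its leftmost and hence largest block, and it keeps $x$ in the last group. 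Applying Proposition~\ref{prop:iden} to the monoid $S^1$ with this decomposition and $\mathbf{v}=x$ yields that $S^1$ satisfies $\mathbf{u}_n(m+1)\bumpeq\mathbf{u}_n(m+1)x$, and therefore so does $S$.

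The argument is essentially bookkeeping, so I do not anticipate a genuine obstacle; the two points that require care are the passage from the semigroup $S$ to the monoid $S^1$, which is what forces the ``$+1$'' in $\mathbf{u}_n(|S|+1)$ and must be handled uniformly whether or not $S$ already has an identity, and the verification that the blocks $B_j$ have non-increasing alphabets. The latter hinges entirely on the ``freshness'' built into the map $f$: without it the inclusions $\alf\bigl(f^{j-1}(\mathbf{y}_n)\bigr)\subseteq\alf\bigl(f^{j}(\mathbf{y}_n)\bigr)$ could fail and Proposition~\ref{prop:iden} would not apply.
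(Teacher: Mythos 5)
Your proposal is correct and takes essentially the same route as the paper: adjoin an identity element to $S$, observe the result is a finite $\mR$-trivial monoid, and apply Proposition~\ref{prop:iden} with $\mathbf{v}=x$ to the block decomposition $\mathbf{u}_n(|S|+1)=xf^{|S|}(\mathbf{y}_n)\cdot xf^{|S|-1}(\mathbf{y}_n)\cdots xf(\mathbf{y}_n)\cdot x\mathbf{y}_n$, whose alphabets are nested by the freshness of the variables inserted by $f$. The only difference is bookkeeping: the paper always adjoins a \emph{fresh} identity (even when $S$ is already a monoid), so the monoid has exactly $|S|+1$ elements and the number of blocks matches its size on the nose, which makes your merging step unnecessary---though that step is itself carried out correctly.
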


\begin{proof}
Let $M=S\cup\{1\}$ where $1$ is a fresh symbol. If the multiplication in $S$ is extended to $M$ in a unique way such that $1$ becomes the identity element, then $M$ becomes an $\mR$-trivial monoid with $|S|+1$ elements. Denoting $|S|+1$ by $m$, we apply Proposition~\ref{prop:iden} (with $\mathbf{u}_n(m)$ and $x$ in the roles of $\mathbf{u}$ and, respectively, $\mathbf{v}$) to the monoid $M$ and the identity $\mathbf{u}_n(m)\bumpeq\mathbf{u}_n(m)x$. Indeed, \eqref{eq:construct} can be viewed as the decomposition
\[
\mathbf{u}=\mathbf{u}_n(m)=\underbrace{xf^{m-1}(\mathbf{y}_n)}_{\mathbf{u}_1}\cdot \underbrace{xf^{m-2}(\mathbf{y}_n)}_{\mathbf{u}_2} \cdots \underbrace{xf(\mathbf{y}_n)}_{\mathbf{u}_{m-1}}\cdot\underbrace{x\mathbf{y}_n}_{\mathbf{u}_m}
\]
with $\alf(\mathbf{u}_1) \supseteq \alf(\mathbf{u}_2)\supseteq \cdots \supseteq \alf(\mathbf{u}_m)\supseteq \alf(x)=\{x\}$, whence the identity $\mathbf{u}_n(m)\bumpeq\mathbf{u}_n(m)x$ holds in the monoid $M$ and so in the subsemigroup $S$.
\end{proof}

\begin{proof}[Proof of Theorem~\ref{thm:C5}]
Take any finite $\mR$-trivial semigroup $S$ such that $\var S$ contains the $i$-Catalan monoid $IC_4$; we have to prove that $S$ is \nfb. For this, we show that $S$ fulfills the conditions 1) and 2) in Proposition~\ref{prop:nfbcor}, with the words $\mathbf{u}_n(|S|+1)$ defined by \eqref{eq:construct} playing the role of the words $\mathbf{u}_n$, $n=1,2,\dotsc$. Indeed, the condition 1) is satisfied since $|\alf(\mathbf{u}_n(|S|+1))|\ge n$ by the construction and $\mathbf{u}_n(|S|+1)$ is not an isoterm for $S$ by Lemma~\ref{lem:noisoterm}. The condition 2) is satisfied because by Lemma~\ref{lem:sparse} every word $\mathbf{u}$ with $|\alf(\mathbf{u})|<n$ such that $\vartheta(\mathbf{u})=\mathbf{u}_n(|S|+1)$ for some substitution $\vartheta\colon \mathfrak A \rightarrow \mathfrak A ^+$ is sparse, and by Corollary~\ref{cor:isoterm} every sparse word is an isoterm for $S$. Hence, Proposition~\ref{prop:nfbcor} ensures that $S$ is \nfb.
\end{proof}

Using Proposition~\ref{prop:inclusion}, we immediately get the following handy fact:

\begin{cor}
\label{cor:C5}
The Catalan monoid $C_5$ is not contained in any \fb\ variety generated by a finite $\mR$-trivial semigroup.
\end{cor}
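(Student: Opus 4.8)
The plan is to deduce the statement for $C_5$ directly from Theorem~\ref{thm:C5} by exploiting the variety inclusion recorded in Proposition~\ref{prop:inclusion}. The essential point is that the variety generated by $C_5$ already contains the $i$-Catalan monoid $IC_4$, so any variety that captures $C_5$ is forced to capture $IC_4$ as well; the obstruction established in Theorem~\ref{thm:C5} then propagates from $IC_4$ up to $C_5$.

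Concretely, I would argue by contradiction. Suppose $C_5\in\var S$ for some finite $\mR$-trivial semigroup $S$ whose generated variety $\var S$ is \fb. Since $\var S$ is a variety containing $C_5$, it contains the whole variety $\var C_5$, and hence $\var C_5\subseteq\var S$. Next I would invoke Proposition~\ref{prop:inclusion} with $m=4$, which gives $\var IC_4\subseteq\var C_5$. Chaining the two inclusions yields $\var IC_4\subseteq\var S$, and in particular $IC_4\in\var S$. But this exhibits a finite $\mR$-trivial semigroup $S$ whose \fb\ generated variety contains $IC_4$, directly contradicting Theorem~\ref{thm:C5}. The contradiction shows that no such $S$ exists, which is exactly the assertion for $C_5$.

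I do not expect any genuine obstacle here: the whole substance of the result already sits in Theorem~\ref{thm:C5} (the isoterm machinery of Lemmas~\ref{lem:sparse}--\ref{lem:noisoterm} together with the sufficient condition of Proposition~\ref{prop:nfbcor}) and in the identity-theoretic inclusion $J_{m+1}\subseteq U_m$ of Lemma~\ref{lem:inclusion} underlying Proposition~\ref{prop:inclusion}. The only care required is bookkeeping about what ``contained in a \fb\ variety generated by a finite $\mR$-trivial semigroup'' means: one must read it as the nonexistence of a finite $\mR$-trivial $S$ that simultaneously generates a \fb\ variety and has $C_5$ among its members, and then note that the implication $C_5\in\var S\Rightarrow IC_4\in\var S$ preserves both the finiteness and the $\mR$-triviality of the generating semigroup (which it does trivially, since the semigroup $S$ itself is unchanged throughout the argument). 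This is why the fact follows immediately from what precedes it.
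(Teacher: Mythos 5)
Your proof is correct and follows exactly the paper's route: the paper likewise derives the corollary immediately from Theorem~\ref{thm:C5} via the inclusion $\var IC_4\subseteq\var C_5$ of Proposition~\ref{prop:inclusion} (the case $m=4$). Nothing is missing; your contradiction argument is just an explicit spelling-out of the paper's one-line deduction.
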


\section{Applications}
\label{sec:applications}

Due to Theorem~\ref{thm:C5} and Corollary~\ref{cor:C5}, in order to prove that a finite $\mR$-trivial semigroup $S$ is \nfb, it suffices to find the $i$-Catalan monoid $IC_4$ or the Catalan monoid $C_5$ in the variety $\var S$. This provides unified proofs for many known `negative' facts on the FBP for finite $\mR$- and $\mJ$-trivial semigroups and leads to several new `negative' results.

We start with a brief overview of known results deducible from Theorem~\ref{thm:C5} or Corollary~\ref{cor:C5} and then proceed with applications to certain monoids that have been considered in the literature but not yet from the viewpoint of the FBP.

\subsection{New proofs of known facts}
\label{subsec:known}

\paragraph{1.} The `negative' parts of items (a) and (b) in Proposition~\ref{prop:FBP} claim that the monoids $E_m$ and $C_m$ are \nfb\ whenever $m\ge 5$. By the definition, $C_m$ is a submonoid of $E_m$, and it is easy to see that $C_5$ is isomorphic to a submonoid of $C_m$ for each $m\ge 5$. Hence, $C_5$ lies in both $\var C_m$ and $\var E_m$ whenever $m\ge 5$, and Corollary~\ref{cor:C5} applies. The part of Proposition~\ref{prop:FBP}(c) dealing with $m>3$ similarly follows from Theorem~\ref{thm:C5}. (Notice that Proposition~\ref{prop:FBP} was not used in the proof of Theorem~\ref{thm:C5} so that there is no \emph{circulus in probando} here.)

\paragraph{2.} Semigroups $S$ and $T$ are called \emph{equationally equivalent} if $\Id S=\Id T$, that is, $S$ and $T$ satisfy the same identities. Several series of finite $\mathrsfs{J}$-trivial monoids parameterized by positive integers appear in the literature, and in spite of arising due to completely unrelated reasons and consisting of elements of a very different nature, it often turns out that the $m$-th monoid in each series is equationally equivalent to $C_m$ (or $C_{m+1}$ if the monoids in the series are indexed by the number of their generators). A (non-exhaustive) list of such monoids follows; we do not reproduce the definitions but provide two references for each series: the first gives the source where the series was introduced, and the second refers to the paper that proved the equational equivalence between the $m$-th monoid in the series and $C_m$ (or $C_{m+1}$):
\begin{itemize}\itemsep-1pt
  \item the monoid of all reflexive binary relations on an $m$-element sets \cite{St80}, \cite{Vo04};
  \item the monoid of all unitriangular Boolean $m\times m$-matrices \cite{St80}, \cite{Vo04};
  \item the Kiselman monoid with $m$ generators \cite{KuMa09}, \cite{AVZ15};
  \item the double Catalan monoid with $m$ generators \cite{MaSt12}, \cite{JoFe19};
  \item the gossip monoid with $m$ generators \cite{BDF15}, \cite{JoFe19};
  \item the stylic monoid with $m$ generators \cite{AbRe22}, \cite{Vo22}.
\end{itemize}
Once the equational equivalence is established, Proposition~\ref{prop:FBP}(b) gives the absence of a finite identity basis for monoids with $m\ge 5$ in the first two items of the list and $m\ge 4$ in the other items. Corollary~\ref{cor:C5} yields the same result but in an easier way since it requires only `one half' of the equational equivalence: it suffices to show that $\Id C_5$ contains the equational theory of the corresponding monoid. In some cases (say, for Kiselman, double Catalan, or stylic monoids), this is much simpler to show than the opposite inclusion.

\paragraph{3.} Goldberg~\cite{Go06a,Go06b,Go07} systematically studied the FBP for monoids of partial order preserving and/or extensive transformations. Along with the series $\{E_m\}_{m\ge1}$ and $\{IC_m\}_{m\ge1}$ that we discussed in Section~\ref{sec:intro2}, he examined the following transformation monoids on $[m]$:
\begin{itemize}\itemsep-1pt
  \item $PE_m$, the monoid of all partial extensive transformations;
  \item $PC_m$, the monoid of all partial extensive order preserving transformations;
  \item $IE_m$, the monoid of all partial extensive injections.
 \end{itemize}
They all were shown to be \nfb\ whenever $m\ge 4$. These results readily follow from Theorem~\ref{thm:C5} since for any $m\ge 4$, the $i$-Catalan monoid $IC_4$ is a submonoid in both $PC_m$ and $IE_m$, which in turn are submonoids in $PE_m$. Goldberg wrote \cite[p.102]{Go07}, ``Observe that the situation when in a sequence of finite transformation monoids (naturally indexed by the size of the base set) all monoids except a few ones at the beginning of the sequence are nonfinitely based is quite common. \dots\  It is very tempting to find out some general reason that forces `large enough' transformation monoids to be nonfinitely based.'' Our Theorem~\ref{thm:C5} reveals that a `general reason' sought by Goldberg is the presence of the $i$-Catalan monoid $IC_4$ in the varieties generated by transformation monoids he considered.

\subsection{Catalan monoids of acyclic graphs and stratifications of $\mathbf{R}$}
\label{subsec:strata}

Let $\Gamma=(V,E)$ with $E\subseteq V\times V$ be a directed graph (digraph); we refer to the elements of the sets $V$ and $E$ as the \emph{vertices} and, respectively, the \emph{edges} of $\Gamma$. Edges of the form $(v,v)$ are called \emph{loops}; as loops are useless for the objects that we are going to introduce, we assume that $\Gamma$ has no loops. For each edge $e=(p,q)\in E$, define the \emph{elementary transformation} $\tau_e$ of the set $V$ as the map that fixes all vertices $v\in V$ except $p$ that is sent to $q$:
\[
v\tau_e=\begin{cases}
q&\text{if \ $v=p$},\\
v&\text{if \ $v\ne p$}.
\end{cases}
\]
Solomon~\cite{So94} defined the \emph{Catalan monoid} of the digraph $\Gamma$, denoted by $C(\Gamma)$, as the submonoid generated by the set $\{\tau_e\mid e\in E\}$ in the monoid of all transformations of the set $V$. The Catalan monoids $C_m$ defined in Section~\ref{sec:intro2} are special instances of this construction: namely, the monoid $C_m$ can be identified with the monoid $C(P_m)$ where $P_m$ stands for the directed simple path with $m$ vertices:
\begin{equation}
\label{eq:path}
P_m: \ \ \stackrel{1}{\bullet}\ \longrightarrow \ \stackrel{2}{\bullet}\ \longrightarrow\dots\longrightarrow \stackrel{m{-}1}{\bullet}\longrightarrow \ \stackrel{m}{\bullet}.
\end{equation}
Therefore, if a digraph $\Gamma$ contains a directed simple path with at least five vertices, that is, a sequence $m\ge 5$ distinct vertices $v_1,v_2\dots,v_m$ such that $(v_i,v_{i+1})\in E$ for all $i=1,\dots,m-1$, then the Catalan monoid $C(\Gamma)$ has a submonoid isomorphic to $C_5$.

A digraph $\Gamma=(V,E)$ is said to be \emph{acyclic} if it has no directed cycles, that is, no vertex sequences $v_0,v_1\dots,v_{n-1}$ with $(v_i,v_{i+1\!\!\pmod n})\in E$ for all $i=0,1,\dots,n-1$. The Catalan monoid of a finite acyclic digraph is $\mR$-trivial \cite[Corollary 2.2]{So94}. Combining this fact and Corollary~\ref{cor:C5}, we get the following:
\begin{proposition}
\label{prop:digraph}
The Catalan monoid of every finite acyclic digraph containing a directed path with at least five vertices is \nfb.
\end{proposition}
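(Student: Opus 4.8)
The plan is to assemble Proposition~\ref{prop:digraph} from three ingredients already available in the excerpt, so the proof should be short and essentially a citation-chaining argument. First I would invoke the cited fact \cite[Corollary 2.2]{So94} that the Catalan monoid $C(\Gamma)$ of a finite acyclic digraph $\Gamma$ is $\mR$-trivial; this puts us squarely in the setting where Corollary~\ref{cor:C5} applies. The target is to produce a copy of $C_5$ inside $\var C(\Gamma)$, and since containment of a submonoid certainly yields containment in the generated variety, it suffices to exhibit $C_5$ as a submonoid of $C(\Gamma)$.

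The key step is the identification $C_m\cong C(P_m)$ together with the observation, already made in the paragraph preceding \eqref{eq:path}, that a directed simple path on $m$ vertices sitting inside $\Gamma$ gives rise to a submonoid of $C(\Gamma)$ isomorphic to $C(P_m)=C_m$. Concretely, if $v_1,\dots,v_m$ are distinct vertices with $(v_i,v_{i+1})\in E$ for $i=1,\dots,m-1$, then the elementary transformations $\tau_{(v_i,v_{i+1})}$ generate a submonoid of $C(\Gamma)$ that is isomorphic to the submonoid of $C(P_m)$ generated by the corresponding path edges, namely $C_m$ itself. Taking $m=5$ (which is possible by the hypothesis that $\Gamma$ contains a directed path with at least five vertices, after truncating a longer path to exactly five vertices) yields a submonoid of $C(\Gamma)$ isomorphic to $C_5$.

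I would then close the argument by applying Corollary~\ref{cor:C5}: since $C_5\in\var C(\Gamma)$ and $C(\Gamma)$ is a finite $\mR$-trivial semigroup, the monoid $C(\Gamma)$ cannot be \fb, which is exactly the claim. In writing this up I would phrase the final sentence as: ``By Corollary~\ref{cor:C5}, the finite $\mR$-trivial monoid $C(\Gamma)$ is \nfb.''

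The only point requiring genuine care is the embedding claim $C(P_5)\hookrightarrow C(\Gamma)$. One must check that the submonoid of $C(\Gamma)$ generated by the five-edge-path's elementary transformations behaves exactly like $C(P_5)$, i.e.\ that the extra vertices and edges of $\Gamma$ do not merge distinct elements or create unexpected relations among the chosen generators. This is intuitively clear because each $\tau_{(v_i,v_{i+1})}$ acts nontrivially only on the single vertex $v_i$ within the path, and the relevant products are determined entirely by how they move the path vertices $v_1,\dots,v_5$; but it is the one spot where a careful reader might demand a line of justification rather than a bare assertion. In the excerpt this subtlety is already waved through (``the Catalan monoid $C(\Gamma)$ has a submonoid isomorphic to $C_5$''), so the write-up can reasonably cite that sentence and keep the proof to essentially two lines.
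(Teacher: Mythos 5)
Your proposal is correct and follows exactly the paper's argument: the paper likewise combines the $\mR$-triviality of $C(\Gamma)$ for acyclic $\Gamma$ (citing \cite[Corollary 2.2]{So94}), the observation that a directed simple path on five vertices yields a submonoid of $C(\Gamma)$ isomorphic to $C(P_5)\cong C_5$, and Corollary~\ref{cor:C5}. Your extra remark on verifying the embedding $C(P_5)\hookrightarrow C(\Gamma)$ is a sound (and easily justified) elaboration of a point the paper indeed treats as immediate, since the path vertices form an invariant set on which the generators act as in $C(P_5)$ while fixing everything else.
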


In~\cite{So94}, Catalan monoids of digraphs arose as a tool for constructing stratifications of the class of all finite $\mR$-trivial monoids. A class $\mathbf{P}$ of finite monoids is called a \emph{pseudovariety} if $\mathbf{P}$ is closed under forming finite direct products and taking divisors of monoids from $\mathbf{P}$. The classes $\mathbf{R}$, $\mathbf{J}$, and $\mathbf{J}\cap\mathbf{Ecom}$ introduced in Section~\ref{sec:intro2} all constitute pseudovarieties; also, for any semigroup variety, its \emph{trace}, that is, the class of all its finite monoids, is a pseudovariety. A \emph{stratification} of a pseudovariety $\mathbf{P}$ is an infinite, strictly increasing sequence of traces (called \emph{strata})
\[
\mathbf{P}_1\subset\mathbf{P}_2\subset\cdots\subset\mathbf{P}_n\subset\cdots
\]
such that $\mathbf{P}=\bigcup\limits_{n=1}^{\infty}\mathbf{P}_n$. The idea is that the strata $\mathbf{P}_1,\mathbf{P}_2,\dots,\mathbf{P}_n,\dots$ can be easier to deal with so that studying them layer by layer can turn out to be a reasonable way to gradually gain fine-grained information about $\mathbf{P}$. This approach has been applied to some other pseudovarieties of importance; for example, Simon used it to study the pseudovariety $\mathbf{J}$ in his thesis~\cite{Si72} (where the term `hierarchy' was used for what is called `stratification' here).

Departing from Eilenberg's characterization of the class of languages corresponding to the pseudovariety $\mathbf{R}$ of all finite $\mR$-trivial monoids (see \cite[Theorem IV.3.3]{Pin86}),  Solomon~\cite[Section~2]{So94} introduced a stratification of $\mathbf{R}$ that he called \emph{Catalan}. The $n$-th stratum of the Catalan stratification is the trace of the variety $\var C(\Gamma_n)$ where $\Gamma_n$ is the acyclic digraph shown in Fig.~\ref{fig:tree}.
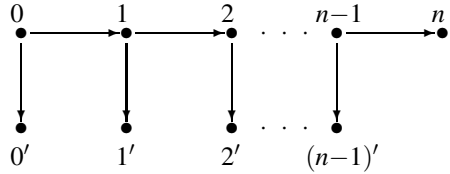
\begin{figure}[ht]
\unitlength=1.4mm
\begin{picture}(60,14)(-12,9)
\multiput(13,20)(10,0){5}{\circle*{1}}
\multiput(13.9,20)(10,0){2}{\vector(1,0){8.4}}
\multiput(13,19)(10,0){4}{\vector(0,-1){7.4}}
\put(43.9,20){\vector(1,0){8.4}}
\multiput(13,11)(10,0){4}{\circle*{1}}
\multiput(36,11)(2,0){3}{\circle*{0.2}}
\multiput(36,20)(2,0){3}{\circle*{0.2}}
\small
\put(12,21){0}
\put(22,21){1}
\put(32,21){2}
\put(41,21){$n{-}1$}
\put(52,21){$n$}
\put(40,7.5){$(n{-}1)'$}
\put(32,7.5){$2'$}
\put(22,7.5){$1'$}
\put(12,7.5){$0'$}
\end{picture}
\caption{The digraph $\Gamma_n$}\label{fig:tree}
\end{figure}

Goldberg \cite[Theorem 3.1]{Go06b} proved that the monoid $C(\Gamma_n)$ is \nfb\ for each $n\ge4$. Of course, this is a special instance of Proposition~\ref{prop:digraph} since the digraph $\Gamma_n$ is acyclic and contains a directed path with $n+1$ vertices. Theorem~\ref{thm:C5} implies a similar fact for any stratification of the pseudovariety $\mathbf{R}$ whose strata are traces of varieties generated by a finite monoid, and moreover, the result requires no a priori information on the structure of the generating monoids. Indeed, consider such a stratification $\{\mathbf{R}_n\}_{n\ge 1}$ of $\mathbf{R}$. Since $\mathbf{R}_1\subset\mathbf{R}_2\subset\cdots\subset\mathbf{R}_n\subset\cdots$ and  $\mathbf{R}=\bigcup\limits_{n=1}^{\infty}\mathbf{R}_n$, the $i$-Catalan monoid $IC_4$, which is $\mJ$-trivial, and hence, $\mR$-triv\-i\-al, must belong to each $\mathbf{R}_n$ with $n$ greater than certain $n_0$. Let $M_n$ be a finite monoid such that $\mathbf{R}_n$ is the trace of the variety $\var M_n$. Then $M_n\in\mathbf{R}_n$ by the definition of a trace whence $M_n$ is $\mR$-trivial. Therefore, for each $n>n_0$, the monoid $M_n$ is \nfb\ by Theorem~\ref{thm:C5}.

Clearly, the same argument applies to any stratification of $\mathbf{J}$ or $\mathbf{J}\cap\mathbf{Ecom}$ whose strata are traces of varieties generated by a finite monoid.

\subsection{Free tree monoids}
\label{subsec:tree}

Ayyer et al.\ \cite{ASST15} have developed a general theory of Markov chains realizable as random walks on $\mR$-trivial monoids, thus providing an elegant and uniform treatment of many classical examples and their generalizations. An essential role in the considerations in \cite{ASST15} is played by a novel series of finite $\mR$-trivial monoids, so-called free tree monoids. The FBP for these monoids does not seem to have been studied so far, but here we demonstrate that it is quite amenable to our approach.

From now on we assume the reader's acquaintance with presenting of monoids in terms of generators and relations; see~\cite[Section 1.12]{ClPr61} or \cite[Section 1.6]{Ho95}. We will frequently use the following fact which is a specialization of Dyck's Theorem (see, e.g., \cite[Theorem III.8.3]{Cohn}) to the case of monoids.
\begin{lemma}\label{lem:dyck}
Let $M$ and $N$ be monoids such that $M$ is generated by a set $A$ subject to relations $R$ and $N$ is generated by $\varphi(A)$ for some map $\varphi\colon A\to N$. If all relations obtained from $R$ by substituting each $a\in A$ with $\varphi(a)$ hold in $N$, then the map $\varphi$ extends to a homomorphism of $M$ onto $N$.
\end{lemma}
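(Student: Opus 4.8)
The plan is to run the standard universal-property argument that underlies Dyck's theorem, specialised to monoids. First I would fix a concrete realisation of the presentation: writing $A^*$ for the free monoid on $A$, the hypothesis that $M$ is generated by $A$ subject to $R$ means that $M$ is isomorphic to $A^*/\rho$, where $\rho$ is the smallest monoid congruence on $A^*$ containing $R$ (each relation being read as a pair of words over $A$). Let $\pi\colon A^*\to M$ be the associated quotient homomorphism, so that $\pi$ sends each letter $a\in A$ to the corresponding generator of $M$.

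Next I would invoke the universal property of the free monoid: the set map $\varphi\colon A\to N$ extends uniquely to a monoid homomorphism $\bar\varphi\colon A^*\to N$ sending a word $a_1\cdots a_k$ to $\varphi(a_1)\cdots\varphi(a_k)$. Since $N$ is generated by $\varphi(A)$, the image of $\bar\varphi$ is all of $N$, so $\bar\varphi$ is surjective.

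The crucial step is to compare $\rho$ with the kernel congruence $\ker\bar\varphi=\{(u,v)\in A^*\times A^*\mid \bar\varphi(u)=\bar\varphi(v)\}$. The hypothesis that every relation of $R$ holds in $N$ after substituting each $a$ by $\varphi(a)$ says precisely that $\bar\varphi(u)=\bar\varphi(v)$ for each defining relation $(u,v)\in R$; hence $R\subseteq\ker\bar\varphi$. Because $\ker\bar\varphi$ is a congruence and $\rho$ is the least congruence containing $R$, it follows that $\rho\subseteq\ker\bar\varphi$.

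Finally I would apply the homomorphism theorem for monoids: since $\rho\subseteq\ker\bar\varphi$, the map $\bar\varphi$ factors through the quotient $A^*/\rho\cong M$, yielding a homomorphism $\psi\colon M\to N$ with $\psi\circ\pi=\bar\varphi$. This $\psi$ sends the image of each generator $a$ to $\varphi(a)$, so it extends $\varphi$ in the required sense, and it is onto because $\bar\varphi$ is. I do not anticipate a genuine obstacle here: the only point demanding care is the bookkeeping that identifies the abstract presentation $\langle A\mid R\rangle$ with $A^*/\rho$ and recognises that ``the relations hold in $N$'' is literally the statement $R\subseteq\ker\bar\varphi$; everything else is the routine factorisation of a homomorphism through a congruence contained in its kernel.
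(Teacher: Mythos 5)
Your proof is correct. Note that the paper itself does not prove this lemma at all: it is stated as a specialization of Dyck's Theorem with a citation to Cohn's \emph{Universal Algebra} (Theorem III.8.3), so there is no in-paper argument to compare against. What you have written is precisely the standard argument that the citation stands in for: realise $M$ as $A^*/\rho$ with $\rho$ the least congruence containing $R$, lift $\varphi$ to $\bar\varphi\colon A^*\to N$ by the universal property of the free monoid, observe that the hypothesis says exactly $R\subseteq\ker\bar\varphi$ so that $\rho\subseteq\ker\bar\varphi$, and factor through the quotient; surjectivity follows since $\varphi(A)$ generates $N$ and the image of a homomorphism from $A^*$ is the submonoid generated by the images of the letters (the empty word supplying the identity). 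Each step is sound, and your closing remark correctly identifies the only delicate point, namely the identification of ``the relations hold in $N$'' with the containment $R\subseteq\ker\bar\varphi$.
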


A convenient presentation for the Catalan monoid $C_m$ was found by Solomon \cite[Section 9]{So96}; see also \cite{GaMa11} for a short argument. Namely, $C_m$ can be identified with the monoid generated by $a_1,a_2,\dots,a_{m-1}$ subject to the relations
\begin{align}
&a_i^2=a_i                  &&\text{for each } i=1,\dots,m-1;\label{eq:idempotent2}\\
&a_{i}a_{k}=a_{k}a_{i}      &&\text{if }|i-k|\ge 2,\ i,k=1,\dots,m-1;\label{eq:catalan1}\\
&a_ia_{i+1}a_i=a_{i+1}a_ia_{i+1}=a_{i+1}a_i &&\text{for each } i=1,\dots,m-2.\label{eq:catalan2}
\end{align}
In the incarnation of $C_m$ as $C(P_m)$, the Catalan monoid of the directed path $P_m$ (see~\eqref{eq:path}), the role of the generators $a_i$, $i=1,\dots,m-1$, is played by the elementary transformations $\tau_{(i,i+1)}$.

In \cite{ASST15} the monoid generated by $a_1,a_2,\dots,a_n$ subject to the relations
\begin{align}
&a_i^2=a_i                  &&\text{for each } i=1,\dots,n;\label{eq:idempotent3}\\
&a_{k}a_{i}a_{k}=a_{k}a_{i} &&\text{if }1\le i<k\le n,\label{eq:treerel}
\end{align}
is named the \emph{free tree monoid}; we denote it by $FT_n$. (The name comes from the fact that the elements of $FT_n$ are in a 1-1 correspondence with certain trees.) If $t(n)=|FT_n|$ for $n\ge1$ and $t(0) = 1$, then the sequence $\{t(n)\}_{n\ge0}$ satisfies the recursion $t(n) = t(n-1)(t(n-1)+1)$; see \cite[Section 5.1]{ASST15} for details. Hence the first six free tree monoids have cardinalities
\[
2,\ 6,\ 42,\ 1806,\ 3263442,\ 10650056950806.
\]

The next straightforward observation establishes a connection between the free tree monoids and the Catalan monoids:
\begin{lemma}
\label{lem:tree-mapsonto-cat}
For each $n=1,2,\dotsc$, the Catalan monoid $C_{n+1}$ is a homomorphic image of the free tree monoid $FT_n$.
\end{lemma}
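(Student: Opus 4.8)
The plan is to use Dyck's Theorem (Lemma~\ref{lem:dyck}) directly. Both monoids are given by presentations, so to exhibit $C_{n+1}$ as a homomorphic image of $FT_n$ it suffices to define a map $\varphi$ from the generators $a_1,\dots,a_n$ of $FT_n$ to $C_{n+1}$ so that the images generate $C_{n+1}$ and satisfy the images of the defining relations \eqref{eq:idempotent3}--\eqref{eq:treerel} of $FT_n$. First I would recall Solomon's presentation of $C_{n+1}$: it is generated by $a_1,\dots,a_n$ (here I use the index range $1,\dots,(n{+}1){-}1=n$) subject to \eqref{eq:idempotent2}--\eqref{eq:catalan2}. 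The natural guess is therefore the identity-on-generators map $\varphi$ sending the $i$-th generator of $FT_n$ to the $i$-th generator of $C_{n+1}$; since this map is onto the generating set of $C_{n+1}$, surjectivity of the resulting homomorphism is automatic once $\varphi$ is shown to respect the relations.

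The key verification is that the two relation families of $FT_n$ hold in $C_{n+1}$ under $\varphi$. The idempotent relations \eqref{eq:idempotent3} map to \eqref{eq:idempotent2}, which hold verbatim, so that step is immediate. The substance is the tree relation \eqref{eq:treerel}, namely $a_ka_ia_k=a_ka_i$ for $1\le i<k\le n$; I must show this is a consequence of Solomon's relations \eqref{eq:catalan1} and \eqref{eq:catalan2} in $C_{n+1}$. I would split into two cases according to the distance $|i-k|$. When $k-i\ge 2$, the generators commute by \eqref{eq:catalan1}, so $a_ka_ia_k=a_ia_k^2=a_ia_k=a_ka_i$ using idempotency, which gives exactly \eqref{eq:treerel}. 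When $k=i+1$, the relation to prove is $a_{i+1}a_ia_{i+1}=a_{i+1}a_i$, and this is literally one of the equalities stated in \eqref{eq:catalan2}. Thus both cases reduce to relations already available in $C_{n+1}$, and by Lemma~\ref{lem:dyck} the map $\varphi$ extends to a surjective homomorphism $FT_n\twoheadrightarrow C_{n+1}$.

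I expect no serious obstacle: the whole proof is a presentation-to-presentation comparison, and the only place requiring a moment's care is bookkeeping the index conventions. One should note that Solomon's presentation of $C_{n+1}$ uses $n$ generators $a_1,\dots,a_n$ (i.e.\ $m-1=n$ with $m=n+1$), which matches the generator count of $FT_n$ exactly, so the identity map on index sets is well defined. The remaining subtlety is simply to confirm that the distance-two case genuinely needs idempotency of $a_k$ (to collapse $a_k^2$ to $a_k$), which is why the idempotent relations \eqref{eq:idempotent2} are used in addition to the commuting relations. Beyond that, the argument is routine and the word ``straightforward'' in the statement is accurate.
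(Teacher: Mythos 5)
Your proposal is correct and matches the paper's proof essentially verbatim: both invoke Lemma~\ref{lem:dyck} with the identity-on-generators map, dispatch \eqref{eq:idempotent3} via \eqref{eq:idempotent2}, and verify \eqref{eq:treerel} by the same two-case split ($k-i\ge 2$ using \eqref{eq:catalan1} plus idempotency, and $k=i+1$ reading the relation directly off \eqref{eq:catalan2}). The only difference is cosmetic---in the distance-two case you commute $a_k$ past $a_i$ on the left rather than commuting the trailing $a_ia_k$, which is the same calculation.
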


\begin{proof}
The monoids $FT_n$ and $C_{n+1}$ are both generated by the set $\{a_1,a_2,\dots,a_n\}$. By Lemma~\ref{lem:dyck}, to show that the identity map on this set extends to a homomorphism of $FT_n$ onto $C_{n+1}$, it suffices to verify that the generators $a_1,a_2,\dots,a_n$ of $C_{n+1}$ satisfy the relations \eqref{eq:idempotent3} and \eqref{eq:treerel}. It is clear for  \eqref{eq:idempotent3} in view of  \eqref{eq:idempotent2}. To verify \eqref{eq:treerel}, let $1\le i<k\le n$. If $i+1<k$, then $k-i\ge2$, and $a_{k}a_{i}a_{k}\stackrel{\eqref{eq:catalan1}}{=}a_{k}a_{k}a_{i}\stackrel{\eqref{eq:idempotent2}}{=}a_{k}a_{i}$. If $i+1=k$, then $a_{k}a_{i}a_{k}=a_{i+1}a_ia_{i+1}\stackrel{\eqref{eq:catalan2}}{=}a_{i+1}a_i=a_{k}a_{i}$.
\end{proof}

From all properties of $FT_n$ established in \cite[Section 5.1]{ASST15}, we need only the following which is a part of Corollary 5.2 in \cite{ASST15}:
\begin{lemma}
\label{lem:tree-rtriv}
The free tree monoid $FT_n$ is $\mR$-trivial.
\end{lemma}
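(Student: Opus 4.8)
The plan is to prove that $FT_n$ is $\mR$-trivial directly from its presentation \eqref{eq:idempotent3}--\eqref{eq:treerel}, by analyzing the structure of its elements as reduced words. First I would identify a convenient normal form for elements of $FT_n$. The relation \eqref{eq:idempotent3} says each generator is idempotent, and the relation \eqref{eq:treerel} lets us delete a later occurrence of $a_k$ whenever it is preceded by some $a_i$ with $i<k$ and then an $a_k$ again, i.e. $a_k a_i a_k = a_k a_i$. The natural idea is to show that every element of $FT_n$ can be written uniquely (or at least canonically) as a product $a_{j_1}a_{j_2}\cdots a_{j_\ell}$ subject to a combinatorial constraint on the index sequence, so that the relevant right-multiplication behaviour becomes transparent. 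The indexing of elements by trees mentioned in the excerpt suggests precisely such a normal form exists.

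The core of the argument is to verify the implication \eqref{eq:rtriv} defining $\mR$-triviality: if $aM\cup\{a\}=bM\cup\{b\}$ then $a=b$, where $M=FT_n$. Equivalently, since $M$ is a finite monoid, it suffices to show that the $\mR$-classes are singletons, i.e. $a\,\mR\,b$ implies $a=b$. The standard reduction is that $a\,\mR\,b$ means $a=bs$ and $b=at$ for some $s,t\in M$, so each of $a,b$ is obtainable from the other by right multiplication. I would introduce a well-founded invariant on elements of $M$ that is nonincreasing under right multiplication and strictly decreasing unless nothing changes. A promising candidate is the \emph{content read from the right}, or more precisely the ordered sequence of ``new'' generators encountered when reading a reduced word, since the defining relations only ever delete redundant trailing occurrences and never introduce genuinely new combinatorial information on the right. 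If I can show that such an invariant determines the element and can only weakly decrease under right multiplication, then $a\,\mR\,b$ forces the invariants to coincide, and coincidence of invariants forces $a=b$.

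Concretely, the steps in order are: (1) establish a confluent rewriting system from \eqref{eq:idempotent3}--\eqref{eq:treerel} (orienting $a_i a_i \to a_i$ and $a_k a_i a_k \to a_k a_i$ for $i<k$) and argue it is terminating and locally confluent, so each element has a unique normal form; (2) characterize the normal-form words by a simple forbidden-factor condition; (3) read off from the normal form an invariant preserved under the $\mR$-relation; and (4) conclude that this invariant is injective on $M$, so $\mR$ is trivial. The main obstacle I anticipate is step (1): verifying local confluence requires checking all overlaps of the rewriting rules, and the overlap between two instances of the length-reducing rule $a_k a_i a_k \to a_k a_i$ with different indices may create critical pairs whose resolution is not immediate. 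Since the statement is cited as a known result (part of \cite[Corollary 5.2]{ASST15}), an alternative and perhaps cleaner route is simply to appeal to the cited source, using Lemma~\ref{lem:tree-mapsonto-cat} and the established $\mR$-triviality of the relevant transformation realization rather than re-deriving the normal form from scratch; the rewriting analysis above is the self-contained backup in case a direct argument is preferred.
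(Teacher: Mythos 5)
The paper offers no proof of this lemma at all: it states the lemma as a quotation of a known result, namely part of Corollary 5.2 of the cited paper of Ayyer, Schilling, Steinberg, and Thi\'ery. So your fallback option (``simply appeal to the cited source'') is exactly what the paper does. Note, however, that the other half of your fallback sentence is backwards: Lemma~\ref{lem:tree-mapsonto-cat} gives a surjection of $FT_n$ \emph{onto} $C_{n+1}$, and $\mR$-triviality passes from a monoid to its homomorphic images, not the other way around, so that lemma can contribute nothing toward proving that $FT_n$ itself is $\mR$-trivial.

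Your primary, self-contained route has a genuine gap precisely where you anticipated trouble. The rewriting system obtained by orienting the defining relations, with rules $a_ia_i \to a_i$ and $a_ka_ia_k \to a_ka_i$ for $i<k$, is terminating (length decreases) but \emph{not} locally confluent, so step (1) fails as stated. Concretely, take $i,j<k$ and the word $a_ka_ia_ka_ja_k$: reducing the left-hand occurrence of $a_ka_ia_k$ yields $a_ka_ia_ja_k$, which is irreducible under your rules, while reducing the right-hand occurrence $a_ka_ja_k$ yields $a_ka_ia_ka_j$ and then $a_ka_ia_j$. These are two distinct irreducible words representing the same element of $FT_n$, since
\[
a_ka_ia_ja_k=(a_ka_ia_k)a_ja_k=a_ka_i(a_ka_ja_k)=(a_ka_ia_k)a_j=a_ka_ia_j .
\]
To repair this one must complete the system, e.g.\ by adding the family of rules $a_k\mathbf{w}a_k\to a_k\mathbf{w}$ for every nonempty word $\mathbf{w}$ over $\{a_1,\dots,a_{k-1}\}$, and then re-verify termination and confluence of the resulting infinite system; none of that is carried out, and the invariant promised in steps (3)--(4) is never actually defined. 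So as written the self-contained argument does not go through, and the only complete route in your proposal is the citation, which coincides with the paper's treatment.
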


We are ready to solve the FBP for almost all free tree monoids.
\begin{proposition}
\label{prop:tree}
For any $n\ge 4$, the free tree monoid $FT_n$ is \nfb.
\end{proposition}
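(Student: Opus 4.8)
The plan is to reduce Proposition~\ref{prop:tree} to Corollary~\ref{cor:C5} by locating the Catalan monoid $C_5$ in the variety generated by $FT_n$ for every $n\ge4$, and then invoking that $FT_n$ is $\mR$-trivial by Lemma~\ref{lem:tree-rtriv}. Recall that Corollary~\ref{cor:C5} states precisely that a finite $\mR$-trivial semigroup whose generated variety contains $C_5$ must be \nfb. So the entire task is to verify the hypotheses of that corollary for $S=FT_n$.

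First I would observe that $FT_n$ is $\mR$-trivial (Lemma~\ref{lem:tree-rtriv}), which settles one hypothesis immediately and for all $n$. The remaining point is that $C_5\in\var FT_n$ whenever $n\ge4$. For this I would use Lemma~\ref{lem:tree-mapsonto-cat}, which gives a surjective homomorphism $FT_n\twoheadrightarrow C_{n+1}$. Since homomorphic images lie in the generated variety, we get $C_{n+1}\in\var FT_n$, and thus $\var C_{n+1}\subseteq\var FT_n$. For $n\ge4$ we have $n+1\ge5$, and as noted in Subsection~\ref{subsec:known} (item 1), $C_5$ embeds as a submonoid of $C_m$ for every $m\ge5$; hence $C_5\in\var C_{n+1}\subseteq\var FT_n$.

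Putting these together: for $n\ge4$, the monoid $FT_n$ is a finite $\mR$-trivial semigroup with $C_5\in\var FT_n$, so Corollary~\ref{cor:C5} yields that $FT_n$ is \nfb. The argument is clean because all the heavy lifting has already been packaged into the earlier lemmas — the presentation-level check that $FT_n$ maps onto $C_{n+1}$ (Lemma~\ref{lem:tree-mapsonto-cat}, proved via Dyck's Theorem) and the $\mR$-triviality (Lemma~\ref{lem:tree-rtriv}).

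I do not anticipate a serious obstacle here, since the statement is essentially a corollary of the machinery already developed. The only point requiring a moment's care is the chain of variety inclusions: one must remember that $\var$ is monotone under the existence of a surjection (a homomorphic image of $S$ belongs to $\var S$) and that the submonoid embedding $C_5\hookrightarrow C_{n+1}$ likewise forces $C_5\in\var C_{n+1}$, so that the inclusions compose to place $C_5$ inside $\var FT_n$. Once this is spelled out, applying Corollary~\ref{cor:C5} is immediate. (One could alternatively route through $IC_4$ and Theorem~\ref{thm:C5} directly, but going through $C_5$ and the already-cited embedding $C_5\hookrightarrow C_{n+1}$ is the most economical path.)
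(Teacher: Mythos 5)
Your proof is correct and rests on the same three pillars as the paper's own proof: Lemma~\ref{lem:tree-rtriv}, Lemma~\ref{lem:tree-mapsonto-cat}, and Corollary~\ref{cor:C5}. The only divergence is in how $C_5$ gets placed inside $\var FT_n$ for general $n\ge4$. The paper applies Lemma~\ref{lem:tree-mapsonto-cat} only at $n=4$ to obtain $C_5\in\var FT_4$, and then invokes the claim that the submonoid of $FT_n$ generated by $a_1,a_2,a_3,a_4$ is isomorphic to $FT_4$; you instead apply Lemma~\ref{lem:tree-mapsonto-cat} at full generality to obtain $C_{n+1}\in\var FT_n$ and then use the embedding $C_5\hookrightarrow C_{n+1}$ already recorded in Subsection~\ref{subsec:known}. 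The two routes compose the same two kinds of divisor relations (homomorphic image and submonoid), just in opposite orders. Yours has a modest advantage: the embedding $C_5\hookrightarrow C_{n+1}$ is a concrete, easily verified fact about transformation monoids (extend an order preserving extensive map of $[5]$ by fixing the remaining points), whereas the paper's claim $FT_4\hookrightarrow FT_n$ is a statement about presented monoids---that a subset of the generators generates a submonoid isomorphic to the monoid defined by the induced sub-presentation---which is true for free tree monoids but is not automatic for arbitrary presentations and is dismissed as ``clearly'' in the paper. Either way, the hypotheses of Corollary~\ref{cor:C5} are met and the conclusion follows.
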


\begin{proof}
By Lemma~\ref{lem:tree-rtriv} the monoid $FT_n$ is $\mR$-trivial, and Lemma~\ref{lem:tree-mapsonto-cat} implies that the Catalan monoid $C_5$ belongs to the variety $\var FT_4$. Clearly, for every $n\ge 4$, the submonoid of $FT_n$ generated by $a_1,a_2,a_3,a_4$ is isomorphic to $FT_4$ whence $C_5$ lies in the variety $\var FT_n$. Therefore, Corollary~\ref{cor:C5} applies.
\end{proof}

Amongst the free tree monoids not covered by Proposition~\ref{prop:tree}, the 2-element monoid $FT_1$ is obviously \fb. The fact that the 6-element monoid $FT_2$ also is \fb\ follows from \cite{LeLi11} where it is shown that only two 6-element monoids are \nfb. The two exceptional monoids are not $\mR$-trivial whence neither is isomorphic to $FT_2$. These observations and Proposition~\ref{prop:tree} reduce the FBP for the free tree monoids to the question of whether or not the 42-element monoid $FT_3$ is \fb. This question is still open.

In fact, Ayyer et al.\ \cite{ASST15} have introduced and studied many more finite $\mR$-trivial monoids useful for the theory of Markov chains. Our technique applies to the FBP for a good deal of such monoids, but we have restricted ourselves to just one typical application to avoid introducing plenty of extra notions.

\subsection{0-Hecke monoids}
\label{subsec:hecke}

The concept of a 0-Hecke monoid comes from the theory of Coxeter groups, classical objects binding algebra, geometry and combinatorics; see \cite{BjBr05} for an accessible introduction into that rich area. Recall the definition of Coxeter groups in terms of generators and relations.

Let $\mathbb{N}^\infty$ stand for the set all positive integers with the extra symbol $+\infty$ added. A symmetric matrix $CD=(m_{ij})_{n\times n}$ with entries in $\mathbb{N}^\infty$ is called a \emph{Coxeter matrix} if $m_{ii}=1$ for all $i$ and $m_{ij}\ge 2$ for all $i\ne j$. We depict such a matrix as the graph (called \emph{Coxeter diagram}) with vertices $1,2,\dots,n$ that has the edge ${i}{\begin{picture}(14,7)\linethickness{1pt}\put(3,3){\line(1,0){10}}\end{picture}}{j}$ if and only if $m_{ij}\ge3$; in addition, if $m_{ij}>3$, then the edge is labeled $m_{ij}$. For instance, the Coxeter matrix $\left(\begin{smallmatrix}
 1 &  4 &  2 \\
 4 &  1 &  3 \\
 2 &  3 &  1
\end{smallmatrix}\right)$ is depicted by the Coxeter diagram $B_3\colon \bullet{\begin{picture}(14,7)\linethickness{1pt}\put(3,3){\line(1,0){10}}\put(5.5,-4.5){\footnotesize4}\end{picture}}\bullet{\begin{picture}(14,7)\linethickness{1pt}\put(2.5,3){\line(1,0){10}}\end{picture}}\bullet$. (As it is common, we omit the vertex names whenever they are clear.)

If $CD=(m_{ij})_{n\times n}$ is a Coxeter matrix, then the \emph{Coxeter group} $W(CD)$ is the group generated by $s_1,s_2,\dots,s_n$ subject to the relations
\begin{equation}\label{eq:coxeter}
(s_is_j)^{m_{ij}}=1\ \text{ for all }\ i,j=1,2,\dots,n\ \text{ such that }\ m_{ij}\ne+\infty.
\end{equation}
Since $m_{ii}=1$, the relations \eqref{eq:coxeter} for $i=j$ mean $s_i^2=1$, that is, each generator $s_i$ is an involution. Using this, one can rewrite the relations \eqref{eq:coxeter} for $i\ne j$ as
\begin{equation}\label{eq:braid}
\underbrace{s_is_j\cdots}_{\text{$m_{ij}$ factors}}=\underbrace{s_js_i\cdots}_{\text{$m_{ij}$ factors}}.
\end{equation}
Continuing our example, the Coxeter group $W(B_3)$ is generated by $s_1,s_2,s_3$ subject to the following six relations:
\[
s_1^2=s_2^2=s_3^2=1,\quad s_1s_2s_1s_2=s_2s_1s_2s_1,\quad s_1s_3=s_3s_1,\quad s_2s_3s_2=s_3s_2s_3.
\]
It has order 48 and is realizable as the group of all symmetries of the usual cube.

The 0-\emph{Hecke monoid} of the group $W(CD)$ is the monoid $H_0(CD)$ generated by $s_1,s_2,\dots,s_n$ subject to the relations \eqref{eq:braid} for all $i\ne j$ such that $m_{ij}\ne+\infty$ and
\begin{equation}\label{eq:idempotent}
s_i^2=s_i \ \text{ for all }\ i=1,2,\dots,n.
\end{equation}
Thus, one passes from $W(CD)$ to $H_0(CD)$ by merely converting each involution $s_i$ into an idempotent with the same name.

Even though the 0-Hecke monoid of a Coxeter group radically differs from the group as an algebraic object, the monoid and the group share many combinatorial features. The reason for this is that the elements of $W(CD)$ and $H_0(CD)$ can be shown to be representable as the same reduced words in the generators $s_1,s_2,\dots,s_n$, albeit with different multiplication rules (see \cite[Theorem 1]{Ts90} where 0-Hecke monoids appear as \emph{Coxeter monoids}). In particular, the Coxeter group $W(CD)$ is finite if and only if so is its 0-Hecke monoid $H_0(CD)$, and moreover, $|H_0(CD)|=|W(CD)|$.

The following property of finite 0-Hecke monoids is explicitly mentioned, e.g., in \cite{DHST11}, see Sections 2.3 and 2.4 of that paper.
\begin{lemma}
\label{lem:hecke}
Each finite $0$-Hecke monoid is $\mJ$-trivial.
\end{lemma}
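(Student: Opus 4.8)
The plan is to establish the $\mJ$-triviality of a finite 0-Hecke monoid $H_0(CD)$ by exhibiting a suitable order on its elements that the multiplication respects, and then using finiteness to collapse the defining implication \eqref{eq:jtriv} to an equality. The most natural structure to exploit is the \emph{Bruhat order} (or equivalently, the weak orders) on the underlying Coxeter group $W(CD)$, transported to $H_0(CD)$ via the fact, recalled in the excerpt, that elements of both $W(CD)$ and $H_0(CD)$ are represented by the same reduced words. Recall that $\mJ$-triviality of a finite monoid $M$ is equivalent to saying that the quasi-order $a\le b\iff a\in MbM\cup Mb\cup bM\cup\{b\}$ is a partial order (antisymmetric); the premise of \eqref{eq:jtriv} asserts exactly that $a\le b$ and $b\le a$, so antisymmetry of this quasi-order is precisely what we need.

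\medskip

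First I would recall that in $H_0(CD)$ every element $w$ has a well-defined \emph{length} $\ell(w)$, namely the common length of all reduced words representing $w$; this is inherited from the Coxeter group since the two structures share the same reduced words. The key monotonicity property is that left or right multiplication by a generator $s_i$ never decreases length: for each $i$ one has $\ell(ws_i)\ge\ell(w)$ and $\ell(s_iw)\ge\ell(w)$, with equality holding exactly when $ws_i=w$ (respectively $s_iw=w$). This follows because in $H_0(CD)$ the idempotent relation \eqref{eq:idempotent} forces $ws_i=w$ whenever appending $s_i$ to a reduced word for $w$ produces a non-reduced word; otherwise the word lengthens by exactly one. Consequently, for \emph{any} element $a\in M$ we have $\ell(ma)\ge\ell(a)$ and $\ell(am)\ge\ell(a)$ for all $m\in M$, simply by writing $m$ as a product of generators and applying the length estimate one generator at a time.

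\medskip

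Next I would use this length bound to control the quasi-order. Suppose both $a\le b$ and $b\le a$ hold, that is, $a\in MbM\cup Mb\cup bM\cup\{b\}$ and symmetrically. In each of the four possibilities, writing $a=pbq$ with $p,q\in M\cup\{1\}$ and applying the length bound twice gives $\ell(a)\ge\ell(b)$; the symmetric relation gives $\ell(b)\ge\ell(a)$, so $\ell(a)=\ell(b)$. Equality of lengths then forces all the intervening multiplications to be length-preserving, which by the equality case above means each generator in the factors $p,q$ acts trivially, so $a=b$. (The cleanest packaging is to observe that $\ell(pbq)=\ell(b)$ together with the strict-increase-unless-fixed dichotomy forces $pb=b$ and $bq=b$ at each step, hence $a=pbq=b$.) This establishes antisymmetry and therefore \eqref{eq:jtriv}, proving $\mJ$-triviality.

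\medskip

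\textbf{Main obstacle.} The technical heart is the length dichotomy $\ell(ws_i)\ge\ell(w)$ with the sharp characterization of when equality occurs, including the assertion $ws_i=w$ in the equality case. This is precisely the matching-condition/exchange property for 0-Hecke monoids, and a fully rigorous treatment requires either invoking the theory of reduced words for Coxeter monoids (as in the cited \cite{Ts90}) or establishing a normal-form result for $H_0(CD)$ directly. I expect this to be the step one must either cite carefully or prove via the braid relations \eqref{eq:braid} and idempotency \eqref{eq:idempotent}; since the excerpt already records in the text preceding the lemma that $W(CD)$ and $H_0(CD)$ are representable by the same reduced words with $|H_0(CD)|=|W(CD)|$, the cleanest route is to cite that fact and derive the length function and its monotonicity from it, after which the antisymmetry argument above is routine.
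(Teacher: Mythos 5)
Your proof is correct, but it is worth pointing out that the paper itself does not prove Lemma~\ref{lem:hecke} at all: the lemma is dispatched by a citation to \cite{DHST11}, Sections 2.3--2.4, so your proposal supplies an argument where the paper offers only a reference. Your route is essentially the standard one underlying that reference: transport the Coxeter length function to $H_0(CD)$ via the shared-reduced-words theorem of \cite{Ts90} (which the paper recalls immediately before the lemma), establish the dichotomy that multiplication by a generator on either side never decreases length and preserves it exactly when it fixes the element, extend this to arbitrary factors by writing them as products of generators, and deduce antisymmetry of the $\mJ$-preorder: $a=pbq$ and $b=ras$ force $\ell(a)=\ell(b)$, and the equality case of the dichotomy, applied generator by generator first to $q$ (giving $bq=b$) and then to $p$ (giving $pb=b$), yields $a=b$. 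Your reduction of the premise of \eqref{eq:jtriv} to mutual divisibility is also sound, since in a monoid the principal ideal of $b$ is $MbM$ and equality of principal ideals is equivalent to mutual membership. Two remarks: first, the dichotomy genuinely needs the exchange property of the Coxeter group in addition to idempotency \eqref{eq:idempotent} (one must know that if appending $s_i$ to a reduced word of $w$ is non-reduced, then $w$ has a reduced word \emph{ending} in $s_i$, so that \eqref{eq:idempotent} can absorb the extra letter); you correctly flag this as the technical heart and the place where \cite{Ts90} must be invoked. Second, your argument nowhere uses finiteness, so it actually proves that \emph{every} $0$-Hecke monoid is $\mJ$-trivial, slightly more than the stated lemma; in \cite{DHST11} the same monotonicity is packaged in the language of ordered monoids, so your proof and the fact cited by the paper coincide in substance, with yours having the advantage of being self-contained modulo the reduced-word theorem the paper already quotes.
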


Let $A_n$ stand for the unlabeled simple path with $n$ vertices:
\[
A_n\colon \ \ \underbrace{{\bullet}\rule[2pt]{14pt}{1pt}{\bullet}\rule[2pt]{14pt}{1pt}\cdots\rule[2pt]{14pt}{1pt}{\bullet}\rule[2pt]{14pt}{1pt}{\bullet}}_{\text{$n$ vertices}}.
\]
The relations \eqref{eq:coxeter} defined by the Coxeter diagram $A_n$ are nothing but Moore's classical relations \cite[Theorem A]{Moore97} for the symmetric group $\mathbb{S}_{n+1}$ so that the Coxeter group $W(A_n)$ is isomorphic to $\mathbb{S}_{n+1}$. It was observed in the literature that the 0-Hecke monoid $H_0(A_n)$ projects onto the Catalan monoid $C_{n+1}$; see, e.g., \cite[Theorem 1(viii)]{GaMa11} or \cite[Section 5]{HiTh09}\footnote{A transformation $\alpha\colon[m]\to[m]$ is called \emph{decreasing} or \emph{parking} if $i\alpha\le i$ for all $i\in[m]$. In \cite{GaMa11} and \cite{HiTh09}, their authors work with monoids of order preserving decreasing transformations but this makes no difference since the monoid of all such transformations on $[n+1]$ is isomorphic to $C_{n+1}$.}. The same argument yields a more general fact:
\begin{lemma}
\label{lem:subdiagram}
Suppose that a Coxeter diagram $CD$ has a simple path with $n$ vertices \textup(whose edges may bear labels\textup). Then the Catalan monoid $C_{n+1}$ is a divisor of the $0$-Hecke monoid $H_0(CD)$.
\end{lemma}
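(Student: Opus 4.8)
The plan is to prove Lemma~\ref{lem:subdiagram} by exhibiting the Catalan monoid $C_{n+1}$ as a homomorphic image of a \emph{submonoid} of $H_0(CD)$, which is exactly what it means for $C_{n+1}$ to be a divisor. Suppose the simple path in $CD$ visits the vertices $i_1, i_2, \dots, i_n$ in order, so that consecutive vertices are joined by edges (possibly labeled). Let $N$ be the submonoid of $H_0(CD)$ generated by the corresponding idempotent generators $s_{i_1}, s_{i_2}, \dots, s_{i_n}$. The goal is to show that $N$ maps onto $C_{n+1}$ under the map sending $s_{i_j}\mapsto a_j$, where $a_1,\dots,a_n$ are the Solomon generators of $C_{n+1}$ from \eqref{eq:idempotent2}--\eqref{eq:catalan2}. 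By Lemma~\ref{lem:dyck} (Dyck's theorem), it suffices to verify that the images $a_1,\dots,a_n$ satisfy in $C_{n+1}$ all the defining relations that $s_{i_1},\dots,s_{i_n}$ satisfy in $N$.

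First I would pin down exactly which relations hold among the generators $s_{i_1},\dots,s_{i_n}$ of $N$. Each $s_{i_j}$ is idempotent by \eqref{eq:idempotent}, matching \eqref{eq:idempotent2}. For two path-generators $s_{i_j}, s_{i_{j+1}}$ that are adjacent on the path, the edge between $i_j$ and $i_{j+1}$ carries some label $m_{i_j i_{j+1}}\ge 3$; if this label is exactly $3$ the braid relation \eqref{eq:braid} reads $s_{i_j}s_{i_{j+1}}s_{i_j}=s_{i_{j+1}}s_{i_j}s_{i_{j+1}}$, which together with idempotency must reduce to the Catalan relation \eqref{eq:catalan2}. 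The key subtlety is that the path may have edges labeled by $m_{i_j i_{j+1}}>3$, so I would need to check that even a longer braid relation, once combined with the idempotent relations \eqref{eq:idempotent2}, is a \emph{consequence} of the Catalan relations \eqref{eq:idempotent2}--\eqref{eq:catalan2} in $C_{n+1}$; this amounts to verifying that in $C_{n+1}$ the words $\underbrace{a_j a_{j+1}\cdots}_{\ell}$ and $\underbrace{a_{j+1} a_j\cdots}_{\ell}$ coincide for every $\ell\ge 3$, which follows by a short induction from \eqref{eq:catalan2}. For non-adjacent path-vertices $i_j, i_k$ with $|j-k|\ge 2$, I must confront that $CD$ need not make them non-adjacent as a graph: there could be a chord. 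The cleanest way around this is to observe that the relations of $N$ are \emph{exactly} the relations inherited from $H_0(CD)$ restricted to the chosen generators, and that any relation forced by a chord is again a braid relation, which by the same computation is derivable from the Catalan relations; when the path-vertices happen to be non-adjacent in $CD$ the generators commute, matching \eqref{eq:catalan1}.

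The main obstacle I anticipate is precisely this bookkeeping about chords and high labels: I must ensure that \emph{every} relation valid among $s_{i_1},\dots,s_{i_n}$ in $N$—not merely the path relations—maps to a valid relation in $C_{n+1}$, since Lemma~\ref{lem:dyck} requires a presentation of $N$, and the induced presentation may carry extra relations beyond those of a plain $A_n$ diagram. I would handle this by noting that all defining relations of $H_0(CD)$ are either idempotent relations \eqref{eq:idempotent} or braid relations \eqref{eq:braid}, and that \emph{both} kinds, when transported to $C_{n+1}$ via $s_{i_j}\mapsto a_j$, become consequences of \eqref{eq:idempotent2}--\eqref{eq:catalan2}: the idempotent case is immediate, while any braid relation of length $m\ge 3$ collapses, using idempotency, to one of the Catalan relations \eqref{eq:catalan2} or to a commutation \eqref{eq:catalan1}. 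Once these verifications are complete, Lemma~\ref{lem:dyck} yields a surjective homomorphism from (a quotient presentation of) $N$ onto $C_{n+1}$, and since $N$ is a submonoid of $H_0(CD)$, the monoid $C_{n+1}$ is a divisor of $H_0(CD)$, as claimed.
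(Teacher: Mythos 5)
Your proposal is correct and takes essentially the same route as the paper: the paper also passes to the generators along the path (identifying the submonoid they generate with the $0$-Hecke monoid of the induced subdiagram, which may contain chords and labels $m_{ij}>3$), applies Lemma~\ref{lem:dyck} to the map $s_{i_j}\mapsto a_j$, and verifies the transported relations exactly as you do---idempotency trivially, chord/non-adjacent braid relations by commutation \eqref{eq:catalan1} plus idempotency, and braid relations on path edges by induction on the label starting from \eqref{eq:catalan2}. Your appeal to ``the relations of $N$ inherited from $H_0(CD)$'' is precisely the paper's (equally unproved, standard) assertion that $H_0(CD_n)$ sits inside $H_0(CD)$ as a submonoid, so the two arguments match point for point.
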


\begin{proof}
Renumbering the vertices of the diagram $CD$ if necessary, we may assume that the path from the premise of the lemma is formed by the vertices $1,2,\dots,n$. Consider the subgraph $CD_n$ induced by $CD$ on these $n$ vertices. Then $CD_n$ also is a Coxeter diagram, and the $0$-Hecke monoid $H_0(CD_n)$ is a submonoid in $H_0(CD)$.

Consider the bijection $s_i\mapsto a_i$ between the generators of $H_0(CD_n)$ and $C_{n+1}$. By Lemma~\ref{lem:dyck}, it extends to a homomorphism of $H_0(CD_n)$ onto $C_{n+1}$ if $a_1,a_2,\dots,a_n$ satisfy the relations \eqref{eq:idempotent} and \eqref{eq:braid} that are imposed on $s_1,s_2,\dots,s_n$ in the definition of $H_0(CD_n)$. It is clear for \eqref{eq:idempotent} in view of \eqref{eq:idempotent2}. Thus, it remains to verify that
\begin{equation}\label{eq:braid for a}
\underbrace{a_ia_j\cdots}_{\text{$m_{ij}$ factors}}=\underbrace{a_ja_i\cdots}_{\text{$m_{ij}$ factors}} \text{ for all }\ i\ne j\ \text{ such that }\ m_{ij}\ne+\infty.
\end{equation}
If $|i-j|\ge 2$, then $a_i$ and $a_j$ commute by \eqref{eq:catalan1}. Using this and \eqref{eq:idempotent2}, we obtain that both sides of \eqref{eq:braid for a} are equal to $a_ia_j$. If $|i-j|=1$, then $i$ and $j$ are adjacent in the path formed by $1,2,\dots,n$. By the definition of a Coxeter diagram, it means that $m_{ij}\ge 3$. We prove  \eqref{eq:braid for a}, inducting on $m_{ij}$. If $m_{ij}=3$, then \eqref{eq:braid for a} reduces to $a_ja_ia_j=a_ia_ja_i$, which equality holds in $C_{n+1}$ because of \eqref{eq:catalan1}. If $m_{ij}>3$, then using $a_ja_ia_j=a_ia_ja_i$ and \eqref{eq:idempotent2}, we obtain
\[
\underbrace{a_ia_ja_ia_j\cdots}_{\text{$m_{ij}$ factors}}=\underbrace{a_ja_ia_ja_j\cdots}_{\text{$m_{ij}$ factors}}=\underbrace{a_ja_ia_j\cdots}_{\text{$m_{ij}-1$ factors}},
\]
and similarly,
\[
\underbrace{a_ja_ia_ja_i\cdots}_{\text{$m_{ij}$ factors}}=\underbrace{a_ia_ja_ia_i\cdots}_{\text{$m_{ij}$ factors}}=\underbrace{a_ia_ja_i\cdots}_{\text{$m_{ij}-1$ factors}}.
\]
Now the induction assumption applies.
\end{proof}

\begin{proposition}
\label{prop:0hecke}
A finite $0$-Hecke monoid is \nfb\ whenever a connected component of its Coxeter diagram has at least four vertices and is not $D_4$ \textup(see Fig.~\ref{fig:d4}\textup).
\begin{figure}[ht]
\unitlength=1mm
\linethickness{1pt}
\begin{picture}(50,5)(-40,0)
\multiput(13,0)(10,0){3}{\circle*{1.5}}
\multiput(14,0)(10,0){2}{\line(1,0){8}}
\put(23,1){\line(0,1){8}}
\put(23,10){\circle*{1.5}}
\end{picture}
\caption{The Coxeter diagram $D_4$}\label{fig:d4}
\end{figure}
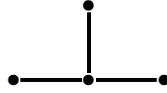
\end{proposition}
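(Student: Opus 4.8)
The plan is to reduce the statement, by means of the tools already assembled, to a purely combinatorial fact about Coxeter diagrams and then to settle that fact. By Lemma~\ref{lem:hecke} the monoid $H_0(CD)$ is $\mJ$-trivial, hence $\mR$-trivial, and by hypothesis it is finite. Thus Corollary~\ref{cor:C5} applies the moment we exhibit the Catalan monoid $C_5$ inside $\var H_0(CD)$. By Lemma~\ref{lem:subdiagram}, $C_5$ is a divisor of $H_0(CD)$---and therefore lies in $\var H_0(CD)$, since varieties are closed under divisors---as soon as $CD$ contains a simple path on four vertices. Because a simple path inside a connected component $CD'$ is in particular a simple path in $CD$, the whole proof boils down to the following claim: \emph{if $CD'$ is a connected component of $CD$ with at least four vertices and $CD'\ne D_4$, then $CD'$ contains a simple path on four vertices.}

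To prove the claim I would first forget the Coxeter labels and treat $CD'$ as an ordinary graph, establishing the combinatorial dichotomy that a connected graph $G$ on at least four vertices either contains a simple path on four vertices or is a star $K_{1,m}$ with $m\ge 3$. The argument is short. If $G$ had a cycle of length $\ge 4$, that cycle would already be a path on four vertices; if $G$ had a triangle, then---using connectedness and $|V(G)|\ge 4$---some vertex outside the triangle is joined to a triangle vertex, again producing a path on four vertices. Hence $G$ is a tree. A tree with no path on four vertices has diameter at most $2$, and a tree of diameter $\le 2$ on at least four vertices is exactly a star $K_{1,m}$ with $m\ge 3$; one sees this by taking a longest path $v_1v_2v_3$ and observing that every remaining vertex must be adjacent to the center $v_2$, for otherwise a path on four vertices reappears.

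It then remains to rule out the stars, and this is where the finiteness hypothesis enters. The graph $D_4$ is precisely the star $K_{1,3}$, so, the excluded case aside, the only obstruction to finding a path on four vertices would be $CD'=K_{1,m}$ with $m\ge 4$. The standard parabolic subgroup of $W(CD)$ generated by the reflections indexed by the vertices of $CD'$ is itself the Coxeter group $W(CD')$, a subgroup of $W(CD)$; and since $|H_0(CD)|=|W(CD)|$, finiteness of $H_0(CD)$ forces $W(CD)$, hence $W(CD')$, to be finite. But $K_{1,4}$ is the affine Coxeter diagram $\tilde{D}_4$, whose Coxeter group is infinite, and $K_{1,m}$ for $m\ge 4$ contains $\tilde{D}_4$ as a subdiagram, so $W(K_{1,m})$ is infinite as well. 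A finite group cannot contain an infinite subgroup, so $CD'$ cannot be such a star. Therefore $CD'$ contains a simple path on four vertices, and the claim---hence the proposition---follows.

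The main obstacle is exactly this last step: without finiteness the statement is false, since the stars $K_{1,m}$ with $m\ge 4$ are connected, have at least four vertices, differ from $D_4$, yet contain no path on four vertices. One must therefore invoke some input from the classification of finite Coxeter groups---minimally, that the affine diagram $\tilde{D}_4=K_{1,4}$ yields an infinite group---in order to discard precisely these stars. Equivalently, one may cite the classification outright: among the connected finite-type diagrams $A_n,B_n,D_n,E_6,E_7,E_8,F_4,H_3,H_4,I_2(m)$, the only one on at least four vertices lacking a simple path on four vertices is $D_4$, which gives the same conclusion through a routine inspection of the list.
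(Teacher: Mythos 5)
Your reduction of the proposition to a combinatorial statement about diagrams is exactly the paper's: Lemma~\ref{lem:hecke} gives $\mJ$-triviality (hence $\mR$-triviality), Lemma~\ref{lem:subdiagram} converts a simple path on four vertices into a copy of $C_5$ dividing the $0$-Hecke monoid, and Corollary~\ref{cor:C5} finishes. The paper then settles the combinatorial statement by inspecting Coxeter's classification of connected diagrams of finite type; your closing sentence, which cites that classification outright, is precisely the paper's proof and is correct. The problem lies in the route you present as the main one, which tries to shrink the classification input to the single fact that $\tilde D_4$ is affine: it forgets the edge labels of Coxeter diagrams, and the labels matter exactly at the point where you match stars against the excluded diagram $D_4$.

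Concretely, $D_4$ is the star $K_{1,3}$ \emph{with all three edges unlabeled}. A Coxeter diagram whose underlying graph is $K_{1,3}$ but which carries a label $\ge 4$ on some edge (for label $4$ this is the affine diagram $\tilde B_3$) is \emph{not} $D_4$, hence satisfies the hypothesis of the proposition, yet it is a star and contains no simple path on four vertices; your argument files all $K_{1,3}$'s under ``the excluded case'' and therefore proves nothing about these diagrams. (They cause no harm only because their Coxeter groups are infinite, so the corresponding $0$-Hecke monoids are infinite and the proposition is vacuous for them---but that is exactly what requires an argument.) The same issue affects $K_{1,m}$ with $m\ge 4$: the parabolic subdiagram on the center plus four leaves is a $K_{1,4}$ \emph{with labels}, and it equals $\tilde D_4$ only when those labels are all $3$; otherwise you need the (true, but nowhere justified here) fact that increasing labels preserves infiniteness. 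Both gaps can be closed uniformly, for instance by the positive-definiteness criterion for finiteness: for a star with center $0$ and leaves $1,\dots,m$, the cosine matrix has determinant $1-\sum_{i=1}^{m}\cos^{2}(\pi/m_{0i})$, which is $\le 0$ for every labeled star on at least four vertices other than $D_4$. Or, more simply, one falls back on the full classification, as in your final sentence---which is the paper's own argument.
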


\begin{proof}
Connected Coxeter diagrams giving rise to finite Coxeter groups (and hence, to finite 0-Hecke monoids) were classified by Coxeter \cite{Co35}; the diagrams are listed, e.g., in \cite[Appendix A1, Table I]{BjBr05}. Inspecting the list readily shows that $D_4$ is the only connected Coxeter diagram with at least four vertices that has no simple path with four vertices. Now Lemma~\ref{lem:subdiagram} implies that if a finite 0-Hecke monoid $H_0$ satisfies the premise of the proposition, then the Catalan monoid $C_5$ is a divisor of $H_0$ whence $C_5$ lies in $\var H_0$. By Lemma~\ref{lem:hecke} $H_0$ is a $\mJ$-trivial monoid. We are therefore in a position to invoke Corollary~\ref{cor:C5}, which implies the claim.
\end{proof}

Finite 0-Hecke monoids with connected Coxeter diagrams whose FBP is covered by neither Proposition~\ref{prop:0hecke} nor the classification of \fb\ monoids with $\le6$ elements from \cite{LeLi11} restrict to the four monoids whose diagrams are $A_3$, $B_3$, $H_3\colon\bullet{\begin{picture}(13,7)\linethickness{1pt}\put(2,3){\line(1,0){10}}\put(5.5,-4.5){\footnotesize5}\end{picture}}\bullet{\begin{picture}(13,7)\linethickness{1pt}\put(2,3){\line(1,0){9}}\end{picture}}\bullet$, and $D_4$, plus the one-parameter series of $2n$-element monoids whose diagrams are $I_n\colon \bullet{\begin{picture}(13,7)\linethickness{1pt}\put(2,3){\line(1,0){10}}\put(5.5,-4.5){\footnotesize$n$}\end{picture}}\bullet$, $n=4,5,\dotsc$; these are the 0-Hecke monoids of the corresponding dihedral groups.

For sake of completeness, we mention that the monoids $H(I_n)$ with $n\ge 6$ are \nfb. This fact does not seem to have been registered in the literature, but it is an easy consequence of the first-named author's results on Lee monoids~\cite{Sa18}. The Lee monoids $L_n^1$, $n=3,4,\dotsc$, are given by the following monoid presentation:
\begin{equation}\label{eq:leemonoid}
L_n^1=\langle e,f\mid e^2=e,\ f^2=f,\ \underbrace{efe\cdots}_{\text{$n$ factors}}=\underbrace{fefe\cdots}_{\text{$n+1$ factors}}=\underbrace{efef\cdots}_{\text{$n+1$ factors}}\rangle.
\end{equation}
Comparing the relations in \eqref{eq:leemonoid} with the relations
\[
s_1^2=s_1,\ s_2^2=s_2,\ \underbrace{s_1s_2\cdots}_{\text{$n+1$ factors}}=\underbrace{s_2s_1\cdots}_{\text{$n+1$ factors}}
\]
of the 0-Hecke monoid $H(I_{n+1})$, one readily obtains from Lemma~\ref{lem:dyck} that the bijection $\begin{cases}s_1\mapsto e\\ s_2\mapsto f\end{cases}$ extends to a homomorphism of $H(I_{n+1})$ onto $L_n^1$. On the other hand, in $H(I_n)$ one has the relation
\begin{equation}\label{eq:In}
\underbrace{s_1s_2\cdots}_{\text{$n$ factors}}=\underbrace{s_2s_1\cdots}_{\text{$n$ factors}}.
\end{equation}
Multiplying \eqref{eq:In} through on the left by $s_1$ and by $s_2$, one gets $\underbrace{s_1s_2\cdots}_{\text{$n$ factors}}=\underbrace{s_1s_2s_1\cdots}_{\text{$n+1$ factors}}$ and, respectively, $\underbrace{s_2s_1s_2\cdots}_{\text{$n+1$ factors}}=\underbrace{s_2s_1\cdots}_{\text{$n$ factors}}$. Combining these two equalities with \eqref{eq:In}, we see that the generators $s_1$ and $s_2$ of $H(I_n)$ fulfill
\[
\underbrace{s_1s_2\cdots}_{\text{$n$ factors}}=\underbrace{s_2s_1s_2\cdots}_{\text{$n+1$ factors}}=\underbrace{s_1s_2s_1\cdots}_{\text{$n+1$ factors}}.
\]
In view of \eqref{eq:leemonoid}, Lemma~\ref{lem:dyck} implies that the bijection $\begin{cases}e\mapsto s_1\\ f\mapsto s_2\end{cases}$ extends to a homomorphism of $L_n^1$ onto $H(I_n)$. We conclude that $\var L_{n-1}^1\subseteq\var H(I_n) \subseteq\var L_n^1$ for all $n=4,5,\dotsc$. By \cite[Corollary 2.5]{Sa18}, every monoid $M$ such that
\[
\var L_5^1 \subseteq \var M \subseteq \var L_n^1 \ \text{ for some $n$}
\]
is \nfb. Hence, for each $n\ge 6$, the 0-Hecke monoid $H(I_n)$ is \nfb.

Summarizing, we see the FBP remains open for only six finite 0-Hecke monoids with connected Coxeter diagrams; the corresponding diagrams are $I_4$, $I_5$, $A_3$, $B_3$, $H_3$ and $D_4$. The sizes of these six monoids are 8, 10, 24, 48, 120, and 192.

\subsection{Monoids of unitary subsets}
\label{subsec:unitary}

For an arbitrary monoid $M$, one can multiply its subsets element-wise: for any $A,B\subseteq M$, put $A\cdot B=\{ab\mid a\in A,\ b\in B\}$. It is known and easy to verify that this multiplication is associative and has the singleton $\{1\}$ as the identity element.  Thus, the powerset $\mathcal{P}(M)$ is a monoid on its own. Restricted to finite monoids, this construction has many applications in the algebraic theory of regular languages; see \cite[Chapter 11]{Almeida:95} and references therein.

We call a subset $A$ of a monoid $M$ \emph{unitary} if $1\in A$. Obviously, the set $\mathcal{P}_1(M)$ of all unitary subsets of $M$ forms a submonoid of the monoid $\mathcal{P}(M)$. Finite monoids of the form $\mathcal{P}_1(M)$ also have language-theoretic applications as discussed by Margolis and Pin~\cite{MaPi84}. They also made the following observation:
\begin{lemma}[\!{\mdseries\cite[Proposition 3.1]{MaPi84}}]
\label{lem:unitary}
If $M$ is a finite monoid, then the monoid $\mathcal{P}_1(M)$ of its unitary subsets is $\mJ$-trivial.
\end{lemma}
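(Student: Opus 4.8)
The plan is to exploit the defining feature of unitary subsets—that each of them contains the identity $1$ of $M$—to show that multiplication in $\mathcal{P}_1(M)$ can only \emph{enlarge} a set with respect to inclusion. This monotonicity forces the two-sided ideal ordering to coincide with reverse inclusion, and $\mJ$-triviality then follows immediately from the antisymmetry of $\subseteq$.

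First I would record the key observation: for any $A,B\in\mathcal{P}_1(M)$ one has $A\subseteq AB$ and $B\subseteq AB$. Indeed, since $1\in B$, every $a\in A$ equals $a\cdot1\in AB$, and since $1\in A$, every $b\in B$ equals $1\cdot b\in AB$. Iterating, for all $X,Y\in\mathcal{P}_1(M)$ we obtain $A\subseteq XA$, $A\subseteq AY$, and hence $A\subseteq XAY$; that is, multiplying an element of $\mathcal{P}_1(M)$ by any element on either side never shrinks the underlying set.

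Now suppose $A,B\in\mathcal{P}_1(M)$ generate the same principal two-sided ideal, i.e.\ the premise of the implication \eqref{eq:jtriv} holds with $S=\mathcal{P}_1(M)$ and with $A,B$ in the roles of $a,b$. Then $A$ lies in the principal ideal generated by $B$, so $A$ equals one of $B$, $XB$, $BY$, or $XBY$ for suitable $X,Y\in\mathcal{P}_1(M)$. In every one of these four cases the monotonicity established above yields $B\subseteq A$. By the symmetric argument, exchanging the roles of $A$ and $B$, we get $A\subseteq B$. Antisymmetry of set inclusion now gives $A=B$, which is exactly the conclusion of \eqref{eq:jtriv}.

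Since the argument uses nothing beyond the presence of $1$ in every unitary subset, there is no substantial obstacle here; the only point requiring a little care is to verify that all four cases arising from membership in the principal ideal collapse to the single inclusion $B\subseteq A$, and to note that the empty-multiplier case $A=B$ is already covered. I expect the whole proof to occupy only a few lines once the monotonicity observation is in place.
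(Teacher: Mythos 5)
Your proof is correct. Note that the paper does not actually prove this lemma---it is quoted verbatim from Margolis and Pin \cite[Proposition 3.1]{MaPi84}---so there is no in-paper argument to compare against; your monotonicity argument (every unitary $A$ satisfies $A\subseteq XAY$, $A\subseteq XA$, $A\subseteq AY$ for unitary $X,Y$, so equality of principal ideals in the sense of \eqref{eq:jtriv} forces mutual inclusion and hence equality) is precisely the standard proof of the cited result, and all four membership cases are handled correctly. A small bonus worth recording: your argument nowhere uses finiteness of $M$, so it in fact shows that $\mathcal{P}_1(M)$ is $\mJ$-trivial for an arbitrary monoid $M$.
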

Thus, we have another natural family of finite $\mJ$-trivial monoids, and it seems that the FBP for this class has remained completely unexplored so far. A systematic study of monoids of unitary subsets from the viewpoint of the FBP goes beyond the scope of this paper. Here we restrict ourselves to two statements, demonstrating that Theorem~\ref{thm:C5} and Corollary~\ref{cor:C5} efficiently work for many such monoids.

A finite monoid $M$ is called \emph{aperiodic} if no non-singleton subsemigroup of $M$ is a group. An equivalent alternative definition is that for each $a\in M$, there is a positive integer $k$ such that $a^k=a^{k+1}$.
\begin{proposition}
\label{prop:p1aperiodic}
For any noncommutative aperiodic monoid $M$, there exists a positive integer $n_0$ such that for all $n\ge n_0$, the monoid of unitary subsets of the $n$-th direct power of $M$ is \nfb.
\end{proposition}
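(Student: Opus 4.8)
The plan is to combine Lemma~\ref{lem:unitary} with Corollary~\ref{cor:C5}. By Lemma~\ref{lem:unitary} each monoid $\mathcal{P}_1(M^n)$ is $\mJ$-trivial, hence $\mR$-trivial, so by Corollary~\ref{cor:C5} it suffices to show that the Catalan monoid $C_5$ lies in $\var\mathcal{P}_1(M^n)$ once $n$ is large enough. Two easy reductions streamline this. First, the map $A\mapsto A\times\{1\}$ is an injective homomorphism of $\mathcal{P}_1(M^n)$ into $\mathcal{P}_1(M^{n+1})$, so the varieties $\var\mathcal{P}_1(M^n)$ form a nondecreasing chain; it therefore suffices to produce a single $n_0$ with $C_5\in\var\mathcal{P}_1(M^{n_0})$. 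Second, since $M$ is noncommutative, I would fix $a,b\in M$ with $ab\ne ba$ and pass to the submonoid $\langle a,b\rangle$: as $\mathcal{P}_1$ sends submonoids to submonoids and surjections to surjections, $\mathcal{P}_1(\langle a,b\rangle^{n})$ divides $\mathcal{P}_1(M^{n})$, and it is enough to treat the two-generated monoid $\langle a,b\rangle$. Thus the whole problem rests on a single noncommuting relation.

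The key is the way set-valued substitutions into $\mathcal{P}_1(M^n)$ record scattered subwords. For a substitution $\varphi\colon x\mapsto A_x\in\mathcal{P}_1(M^n)$ and a word $\mathbf{w}=x_{i_1}\cdots x_{i_s}$, the value $\varphi(\mathbf{w})$ is the set of all products $g_1\cdots g_s$ with $g_p\in A_{i_p}$; since $1\in A_{i_p}$ for every $p$, choosing $g_p=1$ amounts to deleting the $p$-th letter, so $\varphi(\mathbf{w})$ is exactly the set of products obtained over the scattered subwords of $\mathbf{w}$ (ranging over non-identity elements along each subword). Crucially, distinct occurrences of one variable may use different elements of its set, which lets the products feel repeated letters. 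By Proposition~\ref{prop:JMC}, $\Id C_5=J_4$, so $C_5\in\var\mathcal{P}_1(M^n)$ is equivalent to $\Id\mathcal{P}_1(M^n)\subseteq J_4$; equivalently, every identity whose two sides differ in some scattered subword of length $\le 4$ must fail in $\mathcal{P}_1(M^n)$. The plan is therefore to build, for $n$ large, a single substitution $\varphi$ whose values separate any two words that differ in their scattered subwords of length at most $4$.

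I would realize such a separating substitution by a coordinatewise gadget construction driven by the relation $ab\ne ba$. The idea is to devote a block of coordinates of $M^n$ to each of the finitely many scattered patterns $\mathbf{u}$ of length $\le 4$ over the variables in play and, inside each block, to use the non-commuting pair $a,b$ so that the product over a scattered occurrence of $\mathbf{u}$ yields a distinguished element that no shorter or differently ordered selection can produce. The order-sensitivity needed here is precisely what noncommutativity supplies: in the degenerate case where $\langle a,b\rangle$ is a band, such a monoid has a two-element left- or right-zero divisor, so a product in a power is governed coordinatewise by its first (respectively last) active factor, and this left/right domination, combined with the freedom to assign different set elements to different occurrences of a variable, certifies the presence and relative order of the letters of $\mathbf{u}$; when $\langle a,b\rangle$ is not a band, the genuinely new products formed from $a$ and $b$ make the detection more direct. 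Summing the block sizes over the relevant patterns fixes $n_0$, and for $n\ge n_0$ the substitution separates all pairs of words with different length-$\le 4$ scattered subwords, giving $\Id\mathcal{P}_1(M^n)\subseteq J_4$ and hence $C_5\in\var\mathcal{P}_1(M^n)$; Corollary~\ref{cor:C5} then yields that $\mathcal{P}_1(M^n)$ is \nfb.

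I expect this gadget construction to be the main obstacle. The mechanism turning scattered subwords into products is clean, but engineering one substitution that simultaneously detects \emph{all} length-$\le 4$ patterns — including those with repeated variables such as $xyx$ or $xyxy$, whose detection via first/last domination is the most delicate — and doing so uniformly for every noncommutative aperiodic $M$ rather than a hand-picked example, is where the real combinatorial work lies. A convenient way to organize it would be to isolate, whenever possible, a minimal noncommutative aperiodic divisor of $\langle a,b\rangle$ (for a band, a two-element left- or right-zero semigroup with identity adjoined), carry out the construction there, and transport the conclusion back through the division $\mathcal{P}_1(\langle a,b\rangle^{n})\mid\mathcal{P}_1(M^{n})$ established in the first paragraph.
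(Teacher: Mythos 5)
Your overall frame is right---Lemma~\ref{lem:unitary} makes $\mathcal{P}_1(M^{(n)})$ $\mJ$-trivial, hence $\mR$-trivial, and it then suffices to place $C_5$ (or $IC_4$) in $\var\mathcal{P}_1(M^{(n)})$ for all large $n$---and your two preliminary reductions are sound: the embedding $A\mapsto A\times\{1\}$ does make the varieties $\var\mathcal{P}_1(M^{(n)})$ nondecreasing in $n$, and $\mathcal{P}_1(\langle a,b\rangle^{n})$ is indeed a submonoid of $\mathcal{P}_1(M^{n})$. But the heart of the argument is missing. Everything hinges on the ``gadget construction'' of your third paragraph, which you yourself flag as ``the main obstacle'' and ``where the real combinatorial work lies''; what you give is a plan for a proof, not a proof. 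Showing that a single unitary-subset substitution into $\mathcal{P}_1(M^{n})$ separates any two words differing in a scattered subword of length $\le 4$---in particular for patterns with repeated variables such as $xyx$ and $xyxy$, and uniformly over every noncommutative aperiodic $M$, including the band case where only left/right-zero ``domination'' is available---is a genuinely hard combinatorial statement, not a routine verification, and nothing in your sketch establishes it.

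The paper does not attempt this construction at all: its proof consists of citing Corollary 3.8 of Margolis and Pin~\cite{MaPi84}, which says precisely that for a noncommutative aperiodic monoid $M$, every finite $\mJ$-trivial monoid $N$ divides $\mathcal{P}_1(M^{(n_0)})$ for some $n_0$; applying this with $N=IC_4$ and invoking Theorem~\ref{thm:C5} finishes the proof in a few lines. In effect, your proposal reduces the proposition to re-proving that result of~\cite{MaPi84} (in the special case of $C_5$ and two-generated $M$) and then stops short of doing so. Note also that the paper's Remark~\ref{rem:quantifier} explicitly defers the analysis of minimal noncommutative aperiodic divisors---essentially the case analysis your final paragraph gestures at---to a separate publication because it ``requires more structure theory of semigroups,'' which is further evidence that this step cannot be waved through. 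To repair your argument, either cite \cite[Corollary 3.8]{MaPi84} at the point where you need $C_5\in\var\mathcal{P}_1(M^{n})$, or actually carry out the separating-substitution construction in full.
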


\begin{proof}
Let $M^{(n)}$ stand for the $n$-th direct power of $M$. Corollary 3.8 from~\cite{MaPi84} says that for any $\mJ$-trivial monoid $N$, there exists a positive integer $n_0$ such that $N$ is a divisor of the monoid $\mathcal{P}_1(M^{(n_0)})$. Applying this to the $i$-Catalan monoid $IC_4$, we see that $IC_4\in\var\mathcal{P}_1(M^{(n)})$ for all $n$ greater than or equal to a certain $n_0$. By Lemma~\ref{lem:unitary}, the monoid $\mathcal{P}_1(M^{(n)})$ is $\mJ$-trivial. Hence, the monoid $\mathcal{P}_1(M^{(n)})$ is \nfb\ by Theorem~\ref{thm:C5}.
\end{proof}

\begin{remark}
\label{rem:quantifier}
The parameter $n_0$ in Proposition~\ref{prop:p1aperiodic} depends on the monoid $M$. In fact, one can prove a stronger statement: there exists a positive integer $n_0$ such that for any noncommutative aperiodic monoid $M$, the monoid $\mathcal{P}_1(M^{(n)})$ is \nfb\ for all $n\ge n_0$. For this, one should find all \emph{minimal noncommutative aperiodic divisors}, that is, noncommutative aperiodic monoids $N_i$ minimal with the property that at least one of them occurs as a divisor of any given noncommutative aperiodic monoid $M$, and show that the number of the monoids $N_i$ is finite. It is easy to see that if $N$ is a divisor of $M$, then $\mathcal{P}_1(N^{(n)})$ is a divisor of $\mathcal{P}_1(M^{(n)})$ for all positive integers $n$. Therefore, we get the required $n_0$ by choosing $n_0=\max_i\{n_i\}$ where $n_i$ is chosen  for each minimal noncommutative aperiodic divisor $N_i$ so that the $i$-Catalan monoid $IC_4$ is a divisor of $\mathcal{P}_1(N_i^{(n_i)})$. Moreover, the precise values of the numbers $n_i$'s, and hence, of $n_0$ can be computed. These results will be published elsewhere as they require more structure theory of semigroups than was assumed in this paper.
\end{remark}

Now we turn to the FBP for monoids of unitary subsets whose `parent' monoids contain non-singleton subgroups. Here we are in a position to utilize the main result of the preceding section. This is ensured by the next observation that comes from~\cite[Theorem 1]{Ts90}; see also \cite[Proposition 1]{MaSt12} for a short argument for the Coxeter group $W(A_n)\cong\mathbb{S}_{n+1}$ which readily generalizes to any Coxeter group.
\begin{lemma}
\label{lem:hecke-untary}
If $CD$ is a Coxeter matrix and $s_1,s_2,\dots,s_n$ are the generators of the Coxeter group $W(CD)$, then the submonoid of the monoid $\mathcal{P}_1(W(CD))$ generated by the subsets $\{1,s_i\}$, $i=1,2,\dots,n$, is isomorphic to the $0$-Hecke monoid $H_0(CD)$.
\end{lemma}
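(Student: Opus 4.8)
The plan is to build a surjective homomorphism from $H_0(CD)$ onto the submonoid $N:=\langle A_1,\dots,A_n\rangle$, where $A_i:=\{1,s_i\}$, and then to prove that it is injective. The starting observation, on which everything rests, is that multiplying the generators unfolds into taking subwords: for any indices $i_1,\dots,i_k$ the product $A_{i_1}\cdots A_{i_k}$ equals $\{x_1\cdots x_k : x_t\in\{1,s_{i_t}\}\}$, which is exactly the set of elements of $W(CD)$ obtainable as a subword of $s_{i_1}\cdots s_{i_k}$ (choosing $x_t=s_{i_t}$ or $x_t=1$ according to whether position $t$ is kept or dropped). The whole argument is read off from this description.

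First I would check that the $A_i$ satisfy the defining relations \eqref{eq:braid} and \eqref{eq:idempotent} of $H_0(CD)$, so that Lemma~\ref{lem:dyck} yields a homomorphism $\varphi\colon H_0(CD)\to N$ with $s_i\mapsto A_i$ which is automatically onto. Idempotency is immediate, since $A_iA_i=\{1,s_i,s_i^2\}=\{1,s_i\}=A_i$ because $s_i^2=1$ in $W(CD)$. For a braid relation with $m_{ij}<\infty$, note that the parabolic subgroup $\langle s_i,s_j\rangle$ is the dihedral group of order $2m_{ij}$, whose elements are precisely the alternating products $\underbrace{s_is_j\cdots}$ and $\underbrace{s_js_i\cdots}$ of length at most $m_{ij}$. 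Every such element arises as a subword of the alternating word $\underbrace{s_is_j\cdots}_{m_{ij}}$ (the $s_i$-leading ones as its prefixes, the $s_j$-leading ones as the blocks starting at the second letter, and the longest element as the full word), while conversely every subword of that word lies in $\langle s_i,s_j\rangle$. Hence $\underbrace{A_iA_j\cdots}_{m_{ij}}=\langle s_i,s_j\rangle$, and by symmetry $\underbrace{A_jA_i\cdots}_{m_{ij}}$ equals the same set, so \eqref{eq:braid} holds in $N$.

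It remains to establish injectivity of $\varphi$, and here I would use the identification of $H_0(CD)$ with $W(CD)$ by reduced words from \cite{Ts90}: an element $w\in H_0(CD)$ is represented by a reduced word $s_{i_1}\cdots s_{i_\ell}$ for the corresponding $\bar w\in W(CD)$. By the subword description above, $\varphi(w)=A_{i_1}\cdots A_{i_\ell}$ is the set of all subword products of $s_{i_1}\cdots s_{i_\ell}$, and I claim this set is the Bruhat interval $\{u\in W(CD):u\le\bar w\}$. The subword property of Bruhat order realizes every such $u$ as a reduced subexpression, and conversely any subword product can be shortened to a reduced subword by the deletion property, so it too is $\le\bar w$. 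Consequently $\varphi(w)$ is a finite unitary subset whose unique Bruhat-maximal element is $\bar w$, so that $\bar w$, and hence $w$, is recovered from $\varphi(w)$. Thus $\varphi$ is injective and therefore an isomorphism.

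The main obstacle is precisely this injectivity step: pinning down the image $\varphi(w)$ exactly, rather than merely as some subset containing $\bar w$. The clean answer is that $\varphi(w)$ is the principal Bruhat down-set $[e,\bar w]$, and its unique maximum is what separates distinct elements. I would emphasize that this argument needs no finiteness of $W(CD)$: each reduced word is finite and each interval $[e,\bar w]$ is finite, so $N$ consists of finite unitary subsets and the isomorphism holds for every Coxeter matrix $CD$, matching the generality of the statement. (For finite $W(CD)$ one could instead finish by the cardinality count $|N|\le|H_0(CD)|=|W(CD)|$, but the Bruhat-interval argument is both sharper and uniform, and it dovetails with Lemma~\ref{lem:unitary} guaranteeing that $N$ is $\mJ$-trivial.)
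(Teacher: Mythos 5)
Your proof is correct, and it is worth noting that the paper itself contains no proof of Lemma~\ref{lem:hecke-untary}: the lemma is imported from \cite[Theorem 1]{Ts90}, with a pointer to \cite[Proposition 1]{MaSt12} for a short argument in the case $W(A_n)\cong\mathbb{S}_{n+1}$ that is said to generalize readily. What you wrote is essentially that generalized argument spelled out, so you have supplied the proof the paper outsources. Your two halves are the natural ones: the relation check (idempotency of the $A_i=\{1,s_i\}$, and both alternating products of length $m_{ij}$ equal to the dihedral parabolic subgroup $\langle s_i,s_j\rangle$) feeds into Lemma~\ref{lem:dyck} to give a homomorphism $\varphi$ of $H_0(CD)$ onto the submonoid $N$, and injectivity follows by identifying $\varphi(w)$, for $w$ represented by a reduced word $s_{i_1}\cdots s_{i_\ell}$, with the Bruhat interval $\{u\in W(CD)\mid u\le\bar w\}$ --- the subword property gives one inclusion, the deletion property the other --- whose unique maximal element recovers $\bar w$. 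Your external inputs are exactly those the paper endorses: the reduced-word identification of $H_0(CD)$ with $W(CD)$ is quoted from \cite{Ts90} in Section~\ref{subsec:hecke}, and the subword and deletion properties are standard Coxeter facts from \cite{BjBr05}; your remark that no finiteness of $W(CD)$ is needed matches the generality of the statement. Two minor points. First, injectivity can be obtained even more cheaply: $\bar w$ is the unique element of $\varphi(w)$ of Coxeter length $\ell$, since a proper subword of a reduced word multiplies out to an element of length at most $\ell-1$; this avoids Bruhat order entirely, though your interval description is sharper. Second, your parenthetical cardinality alternative for finite $W(CD)$ is incomplete as stated, since surjectivity of $\varphi$ only gives $|N|\le|H_0(CD)|$ and one would still need distinct elements of $W(CD)$ to yield distinct subsets; as your main argument does not rely on it, this is harmless.
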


\begin{proposition}
\label{prop:p1subgroups}
Suppose that a finite monoid $M$ has a Coxeter subgroup $W(CD)$ such that $CD$ has a connected component with at least four vertices and not equal to $D_4$. Then the monoid of unitary subsets of $M$ is \nfb.
\end{proposition}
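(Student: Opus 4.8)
The plan is to reduce Proposition~\ref{prop:p1subgroups} to Corollary~\ref{cor:C5} by locating the Catalan monoid $C_5$ inside the variety generated by $\mathcal{P}_1(M)$ and then verifying that $\mathcal{P}_1(M)$ is $\mR$-trivial. The chain of divisions I would assemble is the following: starting from the Coxeter subgroup $W(CD)\le M$, I first want to pass to the monoid of unitary subsets, then to the $0$-Hecke monoid living inside it, and finally down to $C_5$.

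First I would use the hypothesis that $CD$ has a connected component with at least four vertices not equal to $D_4$. Exactly as in the proof of Proposition~\ref{prop:0hecke}, inspecting Coxeter's classification shows such a component contains a simple path with four vertices, so by Lemma~\ref{lem:subdiagram} the Catalan monoid $C_5$ is a divisor of the $0$-Hecke monoid $H_0(CD)$. Next I would invoke Lemma~\ref{lem:hecke-untary}: since $W(CD)$ is a subgroup of $M$, the monoid $\mathcal{P}_1(W(CD))$ is a submonoid of $\mathcal{P}_1(M)$, and inside $\mathcal{P}_1(W(CD))$ the subsets $\{1,s_i\}$ generate a copy of $H_0(CD)$. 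Composing these two facts, $C_5$ is a divisor of $\mathcal{P}_1(M)$, so $C_5\in\var\mathcal{P}_1(M)$. I should be slightly careful that a subgroup $W(CD)\le M$ really yields the \emph{submonoid} $\mathcal{P}_1(W(CD))\le\mathcal{P}_1(M)$: the identity of $W(CD)$ must coincide with that of $M$ for unitary subsets to match up, which is the natural reading of ``Coxeter subgroup'' and which I would state explicitly.

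Second, I would establish that $\mathcal{P}_1(M)$ is $\mR$-trivial. This is handed to me almost for free by Lemma~\ref{lem:unitary}, which asserts that $\mathcal{P}_1(M)$ is $\mJ$-trivial for any finite monoid $M$; and as recalled in Section~\ref{sec:intro2}, every $\mJ$-trivial semigroup is $\mR$-trivial. Thus $\mathcal{P}_1(M)$ is a finite $\mR$-trivial monoid whose generated variety contains $C_5$. Corollary~\ref{cor:C5} then applies directly and yields that $\mathcal{P}_1(M)$ is \nfb, which is exactly the claim.

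The argument is essentially a bookkeeping of divisions, so there is no deep obstacle; the only point requiring genuine care is the interface in the first step, namely making sure the three ``$C_5$ is a divisor of \dots'' assertions chain correctly. Division is transitive (a divisor of a divisor is a divisor), so $C_5 \mid H_0(CD)$ together with $H_0(CD)\le\mathcal{P}_1(W(CD))\le\mathcal{P}_1(M)$ gives $C_5\mid\mathcal{P}_1(M)$, and membership in the generated variety follows since $\var$ is closed under taking divisors. I expect this transitivity-of-division step to be the part most in need of an explicit one-line justification, whereas the appeals to Lemmas~\ref{lem:subdiagram}, \ref{lem:hecke-untary}, and \ref{lem:unitary} and to Corollary~\ref{cor:C5} are direct citations of results already in hand.
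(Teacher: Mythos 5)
Your proof is correct and follows essentially the same route as the paper's: locate $H_0(CD)$ as a submonoid of $\mathcal{P}_1(M)$ via Lemma~\ref{lem:hecke-untary}, obtain $C_5$ as a divisor of $H_0(CD)$ exactly as in the proof of Proposition~\ref{prop:0hecke}, note that $\mathcal{P}_1(M)$ is $\mJ$-trivial (hence $\mR$-trivial) by Lemma~\ref{lem:unitary}, and invoke Corollary~\ref{cor:C5}. Your explicit remark that the identity of the Coxeter subgroup should coincide with the identity of $M$ addresses a point the paper leaves implicit, but it does not alter the argument.
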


\begin{proof}
Lemma~\ref{lem:hecke-untary} implies that the monoid $\mathcal{P}_1(M)$ has a submonoid isomorphic to the $0$-Hecke monoid $H_0(CD)$. The proof of Proposition~\ref{prop:0hecke} shows that the Catalan monoid $C_5$ is a divisor of $H_0(CD)$ whence $C_5$ lies in $\var\mathcal{P}_1(M)$. By Lemma~\ref{lem:unitary} $\mathcal{P}_1(M)$ is $\mJ$-trivial. Thus,  Corollary~\ref{cor:C5} applies to the monoid $\mathcal{P}_1(M)$.
\end{proof}

\section{Discussion and future work}
\label{sec:discussion}

We have shown that the 42-element monoids $IC_4$ and $C_5$ are inherently \nfb\ relative to finite $\mR$-trivial semigroups and demonstrated a number of applications of this result. What is next?

Having as a model Mark Sapir's work on `absolutely' inherently \nfb\ semigroups \cite{Sa87a,Sa87b}, one can set the goal of a characterization of all semigroups that are inherently \nfb\ relative to finite $\mR$-trivial semigroups. To understand how such a characterization might look like, recall the combinatorial characterization of finite inherently \nfb\ semigroups from \cite{Sa87a}.

Let $x_1,x_2,\dots,x_n,\dots$ be a sequence of variables. The sequence $\{Z_n\}_{n=1,2,\dots}$ of \emph{Zimin words} is defined inductively by $Z_1=x_1$, $Z_{n+1}=Z_nx_{n+1}Z_n$.
\begin{proposition}[\!{\mdseries\cite[Proposition 7]{Sa87a}}]
\label{prop:zimin}
A finite semigroup $S$ is inherently \nfb\ if and only if all Zimin words $Z_n$, $n=1,2,\dotsc$, are isoterms for $S$.
\end{proposition}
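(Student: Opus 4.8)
The plan is to prove both implications by contraposition, with the combinatorial engine in each direction being \emph{Zimin's unavoidability theorem}: for every $n$ there is a bound $N(n,k)$ such that any word of length $\ge N(n,k)$ over a $k$-letter alphabet \emph{encounters} $Z_n$, i.e.\ contains a factor of the form $\sigma(Z_n)$ for some substitution $\sigma$. I would also record at the outset the transfer principle that underlies everything: if $S\in\mathbf V$ then $\Id\mathbf V\subseteq\Id S$, so every word that is an isoterm for $S$ is automatically an isoterm for the whole variety $\mathbf V$.

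First, for the implication ``all $Z_n$ are isoterms for $S$ $\Rightarrow$ $S$ is inherently \nfb'' I argue contrapositively: let $S\in\mathbf V$ with $\mathbf V$ both \fb\ and locally finite. By the transfer principle every $Z_n$ is then an isoterm for $\mathbf V$, so it suffices to prove the \textbf{core lemma}: \emph{a \fb\ variety in which every Zimin word is an isoterm is not locally finite}. (The \fb\ hypothesis is indispensable: the variety generated by an `absolutely' inherently \nfb\ finite semigroup is locally finite yet, by the very theorem being proved, collapses no $Z_n$; so local finiteness alone cannot suffice.) To prove the core lemma I fix a basis of $\mathbf V$ consisting of identities of length $<d$, a two-letter alphabet $\{a,b\}$, and I substitute the variables of each $Z_n$ by words over $\{a,b\}$ to obtain words $\mathbf w_n$. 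The claim is that the $\mathbf w_n$ are pairwise distinct in the two-generated relatively free algebra $F_{\mathbf V}(a,b)$, so that this algebra is infinite and $\mathbf V$ is not locally finite. Distinctness is where Zimin's theorem does the real work: a coincidence $\mathbf w_i=\mathbf w_j$ in $F_{\mathbf V}(a,b)$ is a nontrivial identity of $\mathbf V$, hence deducible from the length-$<d$ relations, and by tracking such a deduction while exploiting that long words encounter $Z_n$, one would extract a nontrivial identity with some $Z_n$ on one side holding in $\mathbf V$ — contradicting that $Z_n$ is an isoterm for $\mathbf V$.

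Next I would treat ``$S$ inherently \nfb\ $\Rightarrow$ all $Z_n$ are isoterms for $S$'', again contrapositively. Suppose some $Z_m$ fails to be an isoterm, so $S$ satisfies a nontrivial identity collapsing $Z_m$. From this I would manufacture a \fb\ locally finite variety $\mathbf V\ni S$, contradicting inherent non-finite-basedness. After normalizing the collapsing identity to a length-nonincreasing one, the variety it defines (or a suitable finitely based refinement of it) is the candidate, and $S$ lies in it by construction. The crux is local finiteness of $\mathbf V$: since $Z_m$ is unavoidable, every sufficiently long word over a finite alphabet contains a factor $\sigma(Z_m)$, which the defining identity rewrites to $\sigma(\mathbf w)$; arranging this rewriting to strictly decrease a well-founded complexity measure (possible precisely because $\mathbf w\ne Z_m$) bounds the reduced forms, so the relatively free algebras of $\mathbf V$ are finite.

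The main obstacle in both directions is the same quantitative combinatorics tying together unavoidability, the finite-basis bound, and local finiteness. Concretely, the delicate points are: in the first direction, engineering the two-letter substitution of the $Z_n$ and tracing deductions from the short relations so that distinctness of the images \emph{provably} survives in any \fb\ variety in which the $Z_n$ are isoterms — this is exactly where Zimin's theorem is used essentially; and in the second direction, verifying that the normalized Zimin-collapsing identity yields a \emph{terminating} rewriting system, and hence genuine finiteness of the free algebras of $\mathbf V$. Everything else — the two contrapositive reductions and the transfer principle $\Id\mathbf V\subseteq\Id S$ — is routine.
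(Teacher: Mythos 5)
You should first note that the paper does not prove this proposition at all: it is imported verbatim from Sapir's paper \cite{Sa87a}, so your attempt has to be judged against what a complete proof of Sapir's theorem requires rather than against anything in the text. Your global architecture (both implications by contraposition, the transfer principle $\Id\mathbf{V}\subseteq\Id S$, unavoidability as the combinatorial engine) is the right frame, and your parenthetical remark that the finite-basis hypothesis is indispensable in the first direction is correct and well taken. But in that first direction the entire difficulty is concealed in the clause ``by tracking such a deduction \dots one would extract a nontrivial identity with some $Z_n$ on one side''. What a deduction actually gives you is a first nontrivial application of a basis identity $\mathbf{p}\bumpeq\mathbf{q}$ to the encoded word $\mathbf{w}_i=\sigma(Z_i)$, i.e.\ a factorization $\sigma(Z_i)=\mathbf{a}\,\tau(\mathbf{p})\,\mathbf{b}$ with $\tau(\mathbf{p})\ne\tau(\mathbf{q})$. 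This does \emph{not} lift to a nontrivial application to any genuine Zimin word: the encoding $\sigma$ identifies variables, so the blocks of $Z_i$ lying above the occurrences in $\tau(\mathbf{p})$ of a repeated variable of $\mathbf{p}$ need not be equal words, and hence no substitution $\tau'$ with $\tau'(\mathbf{p})$ a factor of some $Z_N$ is obtained. Designing an encoding for which this lifting can be forced is the technical heart of Sapir's argument, and it is exactly what is missing.

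The second direction, as described, would fail, not merely remain incomplete. Take $S$ to be the cyclic group of odd order $q\ge 665$. Then $Z_1=x_1$ is not an isoterm for $S$, witnessed by $x\bumpeq x^{q+1}$, which is already ``length-nonincreasing'' when oriented as $x^{q+1}\to x$. Yet the variety defined by this identity is not locally finite: it contains every group of exponent dividing $q$, in particular the free Burnside groups $B(k,q)$, which are infinite and finitely generated by Novikov--Adian. Your rewriting mechanism breaks at its load-bearing point: unavoidability is a property of the Zimin word $Z_m$, but once the identity is oriented so as to decrease a measure, the word you must locate inside an arbitrary long word is the \emph{other} side, which is in general avoidable (here, long cube-free words contain no factor of the form $u^{q+1}$), so long words need not be reducible at all; and no choice of well-founded measure can repair this, since the candidate variety itself fails to be locally finite. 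The hedge ``a suitable finitely based refinement'' thus carries the whole theorem: the actual proof takes the finite set of \emph{all} identities of $S$ in boundedly many variables and of bounded length, and establishes local finiteness of that variety via Sapir's Burnside-type structure theory for semigroups together with the Oates--Powell theorem (to control the subgroups of $S$) --- ingredients that cannot be replaced by a termination argument on words. So both cruxes are genuinely open in your proposal, and the second one needs a different method, not just more detail.
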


Analyzing our proof of Theorem~\ref{thm:C5}, one sees that the only property of the monoid $IC_4$ that has been used is the fact (established in Lemma~\ref{lem:sparseC5}) that all sparse words are isoterms for $IC_4$.
Thus, the proof actually yields the following result parallel to the `if' part of Proposition~\ref{prop:zimin}:
\begin{proposition}
\label{prop:sufficient}
A finite $\mR$-trivial semigroup $S$ is inherently \nfb\ relative to finite $\mR$-trivial semigroups if all sparse words are isoterms for $S$.
\end{proposition}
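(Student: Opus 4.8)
The plan is to mirror, as closely as possible, the argument already carried out in the proof of Theorem~\ref{thm:C5}, isolating exactly the point at which the special nature of $IC_4$ entered and replacing it by the hypothesis of the present proposition. Recall that ``inherently \nfb\ relative to finite $\mR$-trivial semigroups'' means: for every finite $\mR$-trivial semigroup $T$, if $S\in\var T$, then $T$ is \nfb. So first I would fix an arbitrary finite $\mR$-trivial semigroup $T$ with $S\in\var T$ and aim to prove $T$ is \nfb\ by verifying the two conditions of Proposition~\ref{prop:nfbcor}, using once again the family of words $\mathbf{u}_n(|T|+1)$ defined in \eqref{eq:construct} as the witnesses $\mathbf{u}_n$.

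The key observation making this work is that the two lemmas feeding into Theorem~\ref{thm:C5} split cleanly into an $\mR$-trivial part and an ``$IC_4$-specific'' part. Lemma~\ref{lem:sparse} is purely about the combinatorics of the words $\mathbf{u}_n(m)$ and the substitution $\vartheta$, with no reference to any semigroup, so it applies verbatim: any word $\mathbf{u}$ with $|\alf(\mathbf{u})|<n$ and $\vartheta(\mathbf{u})=\mathbf{u}_n(|T|+1)$ is sparse. Likewise Lemma~\ref{lem:noisoterm} uses nothing about $T$ beyond its being a finite $\mR$-trivial semigroup, so $\mathbf{u}_n(|T|+1)\bumpeq\mathbf{u}_n(|T|+1)x$ holds in $T$, giving condition~1) of Proposition~\ref{prop:nfbcor} exactly as before. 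The only place where $IC_4$ was genuinely invoked was Corollary~\ref{cor:isoterm}, which derived from Lemma~\ref{lem:sparseC5} the fact that sparse words are isoterms for any $S'$ with $IC_4\in\var S'$. This is precisely the step I would replace by the hypothesis of the proposition together with an elementary transfer-of-isoterms argument.

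Concretely, the hypothesis hands us that all sparse words are isoterms for $S$ itself. To get condition~2), I need that every sparse word is an isoterm for $T$, not merely for $S$. Here I would use the same remark stated just before Corollary~\ref{cor:isoterm}: since $S\in\var T$, every identity holding in $T$ also holds in $S$, and consequently every word that is an isoterm for $S$ is \emph{a fortiori} an isoterm for $T$. Thus sparse words, being isoterms for $S$ by hypothesis, are isoterms for $T$ as well. Combining this with Lemma~\ref{lem:sparse} yields condition~2): any $\mathbf{u}$ with $|\alf(\mathbf{u})|<n$ and $\vartheta(\mathbf{u})=\mathbf{u}_n(|T|+1)$ is sparse, hence an isoterm for $T$. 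With both conditions of Proposition~\ref{prop:nfbcor} in hand, $T$ is \nfb, which is exactly what relative inherent nonfinite basedness demands.

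I do not anticipate a serious obstacle, since every ingredient is already proved in the excerpt and the argument is essentially a re-bracketing of the proof of Theorem~\ref{thm:C5}. The one subtlety worth double-checking is the direction of the isoterm transfer: the inclusion $S\in\var T$ propagates identities \emph{from} $T$ \emph{to} $S$, hence isoterms \emph{from} $S$ \emph{to} $T$, which is precisely the direction I need (I want isoterms for $T$, and I am given them for $S$). This is the same logical orientation used in Corollary~\ref{cor:isoterm}, so it is safe. The only other point to state carefully is that $S$ being $\mR$-trivial is not actually used for $S$ in the argument; what the hypothesis ``$S$ is $\mR$-trivial'' guarantees is merely that $S$ is an eligible object for the relative notion, and the real work is done by the assumption that sparse words are isoterms for $S$ — exactly as the final sentence of the paper anticipates.
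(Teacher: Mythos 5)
Your proposal is correct and follows essentially the same route as the paper, which proves Proposition~\ref{prop:sufficient} precisely by observing that the proof of Theorem~\ref{thm:C5} uses only the fact that sparse words are isoterms for $IC_4$ (via Lemma~\ref{lem:sparseC5} and Corollary~\ref{cor:isoterm}), so that substituting the hypothesis on $S$ for that lemma and rerunning the argument with $\mathbf{u}_n(|T|+1)$, Lemma~\ref{lem:sparse}, Lemma~\ref{lem:noisoterm}, and Proposition~\ref{prop:nfbcor} yields the claim. Your check of the direction of the isoterm transfer (from $S$ to $T$ when $S\in\var T$) and your remark that the $\mR$-triviality of $S$ itself is never used are both accurate.
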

We do not know whether or not the condition of Proposition~\ref{prop:sufficient} is necessary. If it is, then we would get a combinatorial characterization of semigroups that are inherently \nfb\ relative to finite $\mR$-trivial semigroups in the flavour of Mark Sapir's result for the `absolute' case. To make the analogy even more apparent, one can restate Proposition~\ref{prop:sufficient}, requiring that only a suitable sequence of `typical' sparse words consists of isoterms for $S$.

As for the structural  characterization of finite inherently \nfb\ semigroups from \cite{Sa87b}, we have no possible analogue for our case in sight. Nor do we know whether or not 42 is the minimum cardinality of a semigroup that is inherently \nfb\ relative to finite $\mR$-trivial semigroups. There exist much smaller \nfb\ $\mJ$-trivial semigroups, the smallest being the 6-element semigroup $L_3$ given by the semigroup presentation
\[
\langle e,f\mid e^2=e,\ f^2=f,\ efe=(ef)^2=(fe)^2\rangle;
\]
see \cite{ZhLu11}. For some of such smaller examples (for instance, for $L_3$), we know that they are not inherently \nfb\ relative to finite $\mR$-trivial semigroups, but for many instances the question is still open. The smallest such instance is the 8-element semigroup $L_4$ defined by the the semigroup presentation
\[
\langle e,f\mid e^2=e,\ f^2=f,\ (ef)^2=(ef)^2e=(fe)^2f\rangle.
\]
The fact that $L_4$ is \nfb\ follows from~\cite[Theorem 6.2]{Lee17}.

Still, it is very tempting to conjecture that the 42-element monoids $IC_4$ and $C_5$ are the only semigroups of minimum size that are inherently \nfb\ relative to finite $\mR$-trivial semigroups (and not only because of the special role of the number 42 known from Douglas Adams's `The Hitchhiker's Guide to the Galaxy'). This would make quite a perfect analogy with the `absolute' case where there exists exactly two inherently \nfb\ semigroups of minimum size. These two are the 6-element \emph{Brandt monoid} $B_2^1$ (which is, quoting from \cite{JaZh21}, `perhaps the most ubiquitous harbinger of complex behaviour in all finite semigroups') and another 6-element monoid commonly denoted by $A_2^1$. Both $B_2^1$ and $A_2^1$ have nice monoid presentations and convenient faithful representations by zero-one $2\times 2$-matrices, but here we prefer to define them as certain monoids of order preserving transformations of a chain to align their definitions with those of $IC_4$ and $C_5$. Namely, the Brandt monoid $B_2^1$ is nothing but the monoid of all partial order preserving injections of the 2-element chain, see Fig.~\ref{fig:Brandt}.
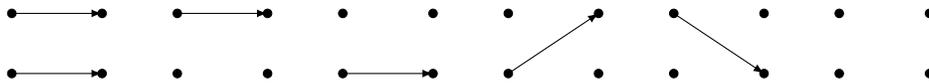
\begin{figure}[htb]
\vspace*{15pt}
\centering
\begin{tikzpicture}
[scale=0.8]
\foreach \x in {0.75,2.25,3.5,5,6.25,7.75,9,10.5,11.75,13.25,14.5,16} \foreach \y in {0.5,1.5} \filldraw (\x,\y) circle (2pt);
	\draw (0.75,0.5) edge[-latex] (2.25,0.5);
	\draw (0.75,1.5) edge[-latex] (2.25,1.5);
	\draw (3.5,1.5) edge[-latex] (5,1.5);
	\draw (6.25,0.5) edge[-latex] (7.75,0.5);
	\draw (9,0.5) edge[-latex] (10.5,1.5);
	\draw (11.75,1.5) edge[-latex] (13.25,0.5);
\end{tikzpicture}
\caption{The six partial order preserving injections forming ${B}^1_2$}\label{fig:Brandt}
\end{figure}
For comparison, the $i$-Catalan monoid $IC_4$ consists of all partial order preserving \emph{and extensive} injections of the 4-element chain. Similarly, the monoid $A_2^1$ can be defined as the monoid of all total order preserving transformations of the 3-element chain that fix the greatest element of the chain, see Fig.~\ref{fig:A2}.
\begin{figure}[htb]
\vspace*{15pt}
\centering
\begin{tikzpicture}
[scale=0.8]
\foreach \x in {0.75,2.25,3.5,5,6.25,7.75,9,10.5,11.75,13.25,14.5,16} \foreach \y in {0.5,1.5,2.5} \filldraw (\x,\y) circle (2pt);
	\draw (0.75,0.5) edge[-latex] (2.25,0.5);
	\draw (0.75,1.5) edge[-latex] (2.25,1.5);
    \draw (0.75,2.5) edge[-latex] (2.25,2.5);
	\draw (3.5,0.5) edge[-latex] (5,1.5);
	\draw (3.5,1.5) edge[-latex] (5,1.5);
    \draw (3.5,2.5) edge[-latex] (5,2.5);
	\draw (6.25,0.5) edge[-latex] (7.75,0.5);
	\draw (6.25,1.5) edge[-latex] (7.75,2.5);
    \draw (6.25,2.5) edge[-latex] (7.75,2.5);
	\draw (9,0.5) edge[-latex] (10.5,1.5);
	\draw (9,1.5) edge[-latex] (10.5,2.5);
    \draw (9,2.5) edge[-latex] (10.5,2.5);
	\draw (11.75,0.5) edge[-latex] (13.25,0.5);
	\draw (11.75,1.5) edge[-latex] (13.25,0.5);
    \draw (11.75,2.5) edge[-latex] (13.25,2.5);
    \draw (14.5,0.5) edge[-latex] (16,2.5);
	\draw (14.5,1.5) edge[-latex] (16,2.5);
    \draw (14.5,2.5) edge[-latex] (16,2.5);
\end{tikzpicture}
\caption{The six order preserving transformations forming ${A}^1_2$}\label{fig:A2}
\end{figure}
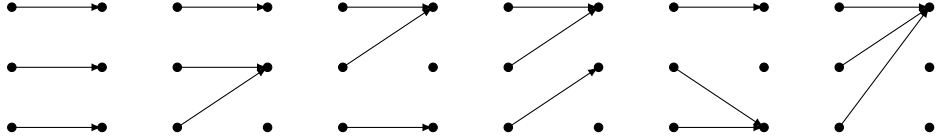
Again, for comparison, the Catalan monoid $C_5$ consists of all total order preserving \emph{and extensive} transformations of the 5-element chain, and the extensity implies that all transformations of $C_5$ fix the greatest element of the chain. We see that to get the definitions of $IC_4$ and $C_5$ from those of $B_2^1$ and $A_2^1$ respectively, one just adds two points to the base sets and includes the extensity requirement. The analogy extends even further, say, both $C_5$ and $A_2^1$ are generated by their idempotents while in both $IC_4$ and $B_2^1$ the idempotents commute. Also, the strict inclusion $\var IC_4\subset\var C_5$ that follows from Proposition~\ref{prop:inclusion} perfectly parallels the well-known inclusion $\var B^1_2\subset\var A^1_2$.

Along with trying to find semigroups that are inherently \nfb\ relative to finite $\mR$-trivial semigroups and are smaller in size than $IC_4$ and $C_5$, one can attempt to advance into the opposite direction: to enlarge the class of finite semigroups relative to which $IC_4$ and $C_5$ are inherently \nfb. Yet another look at the proof of Theorem~\ref{thm:C5} tells us that the only property of finite $\mR$-trivial semigroups that the proof exploits is Lemma~\ref{lem:noisoterm} showing that for any such semigroup, all words from a specific infinite series are non-isoterms. Besides, further analysis shows that rather than a concrete form of these words, their properties (P1) and (P2) stated after the construction \eqref{eq:construct} make the proof work. These considerations lead to the following result.
\begin{proposition}
\label{prop:newclass}
Suppose that $\mathbf{C}$ is a class of finite semigroups and for each semigroup $S\in\mathbf{C}$, there exists an infinite sequence of non-isoterms $\{\mathbf{w}_n\}$ such that
\begin{itemize}
  \item[\emph{(P1)}] for all variables $y,z$, the word $yz$ occurs in $\mathbf{w}_n$ as a factor at most once;
  \item[\emph{(P2)}] for every variable $z$, there are at least $n$ pairwise distinct variables between any two occurrences of $z$ in $\mathbf{w}_n$.
\end{itemize}
Then the $i$-Catalan monoid $IC_4$ is inherently \nfb\ relative to $\mathbf{C}$.
\end{proposition}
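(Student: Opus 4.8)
The plan is to follow the proof of Theorem~\ref{thm:C5} almost verbatim, replacing the concrete words $\mathbf{u}_n(|S|+1)$ by the abstract words $\mathbf{w}_n$ supplied in the hypothesis, after observing that the properties (P1) and (P2) are literally all that the original argument ever exploited about those words. Recall that ``$IC_4$ is inherently \nfb\ relative to $\mathbf{C}$'' means that $IC_4$ lies in no \fb\ variety generated by a semigroup from $\mathbf{C}$; equivalently, every $S\in\mathbf{C}$ with $IC_4\in\var S$ is \nfb. So I would fix such an $S$, take the sequence $\{\mathbf{w}_n\}$ of non-isoterms granted by the hypothesis, and aim to show that $S$ fulfils the conditions 1) and 2) of Proposition~\ref{prop:nfbcor}, with $\mathbf{w}_n$ playing the role of $\mathbf{u}_n$.

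For condition 1), the non-isoterm part is immediate, so only the bound $|\alf(\mathbf{w}_n)|\ge n$ needs an argument. This is the single extra observation to insert: a word with no repeated variable is trivially sparse, hence an isoterm for $S$ by Corollary~\ref{cor:isoterm} (which applies precisely because $IC_4\in\var S$). Since $\mathbf{w}_n$ is a non-isoterm, it must therefore contain a repeated variable $z$, and then (P2) guarantees at least $n$ pairwise distinct variables between two occurrences of $z$, whence $|\alf(\mathbf{w}_n)|\ge n$.

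For condition 2), I would invoke the proof of Lemma~\ref{lem:sparse} essentially word for word. That proof derives the sparseness of any word $\mathbf{u}$ with $|\alf(\mathbf{u})|<n$ and $\vartheta(\mathbf{u})=\mathbf{u}_n(m)$ using nothing about $\mathbf{u}_n(m)$ beyond (P1) --- which forces the image under $\vartheta$ of each repeated variable of $\mathbf{u}$ to be a single variable --- and (P2) --- which rules out adjacent, or insufficiently separated, occurrences of such an image. Rereading that argument with $\mathbf{w}_n$ in place of $\mathbf{u}_n(m)$ shows that any $\mathbf{u}$ with $|\alf(\mathbf{u})|<n$ and $\vartheta(\mathbf{u})=\mathbf{w}_n$ is sparse, and Corollary~\ref{cor:isoterm} then makes it an isoterm for $S$, which is exactly condition 2). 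With both conditions verified, Proposition~\ref{prop:nfbcor} yields that $S$ is \nfb; as $S$ was an arbitrary member of $\mathbf{C}$ containing $IC_4$ in its variety, the proposition follows.

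I do not expect a genuine obstacle here, since the heavy lifting --- Lemma~\ref{lem:sparseC5} and, through it, Corollary~\ref{cor:isoterm}, together with the combinatorial core of Lemma~\ref{lem:sparse} --- is already done and is insensitive to the concrete shape of the words. The only place that requires an addition not literally present in the proof of Theorem~\ref{thm:C5} is the cardinality estimate $|\alf(\mathbf{w}_n)|\ge n$ in condition 1): for $\mathbf{u}_n(m)$ this was built into the construction~\eqref{eq:construct}, whereas here it must be recovered from (P2) together with the remark that a non-isoterm for $S$ cannot be linear. The bulk of the write-up is thus a careful reformulation of the earlier lemmas in their intrinsic generality rather than any new mathematics.
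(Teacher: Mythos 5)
Your proposal is correct and takes exactly the paper's approach: the paper presents Proposition~\ref{prop:newclass} precisely as the outcome of rereading the proof of Theorem~\ref{thm:C5} with properties (P1) and (P2) abstracted from the concrete words $\mathbf{u}_n(m)$, noting that Lemma~\ref{lem:sparse} and Corollary~\ref{cor:isoterm} are the only ingredients that touch those words. Your added argument for condition 1) --- that a non-isoterm for $S$ must contain a repeated variable, since a word without repeated variables is sparse and hence an isoterm by Corollary~\ref{cor:isoterm}, so that (P2) then yields $|\alf(\mathbf{w}_n)|\ge n$ --- is sound and is exactly the detail the paper leaves implicit.
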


We give an example of a natural class of finite monoids that satisfies the conditions of Proposition~\ref{prop:newclass} and strictly contains the class $\mathbf{R}$ of all finite $\mR$-trivial monoids\footnote{This example can be easily modified to provide a similar result in the semigroup setting.}. Recall the notion dual to $\mR$-triviality: a semigroup $S$ is called $\mL$-\emph{trivial} if every principal left ideal of $S$ has a unique generator, that is, for all $a,b\in S$,
\[
Sa\cup\{a\}=Sb\cup\{b\}\to a=b.
\]
The class $\mathbf{L}$ of all finite $\mL$-trivial monoids is a pseudovariety. The class
\[
\mathbf{R}\vee\mathbf{L}=\{S\mid S \text{ is a divisor of } R\times L\ \text{ for some }\ R\in\mathbf{R},\ L\in\mathbf{L}\}
\]
is the smallest pseudovariety containing both $\mathbf{R}$ and $\mathbf{L}$. The pseudovariety $\mathbf{R}\vee\mathbf{L}$ arose in the study of formal languages in \cite{Ko85} and was further investigated in~\cite{AlAz89,KuLa12}.
\par Recall that a semigroup satisfying the identity $x^2\bumpeq x$ is called a \emph{band}. The class $\mathbf{B}$ of all finite band monoids also forms a pseudovariety, and we can consider $\mathbf{R}\vee\mathbf{L}\vee\mathbf{B}$, the smallest pseudovariety containing both $\mathbf{R}\vee\mathbf{L}$ and $\mathbf{B}$. It admits a similar description in terms of division:
\[
\mathbf{R}\vee\mathbf{L}\vee\mathbf{B}=\{S\mid S \text{ is a divisor of } R\times L\times B\ \text{ for some }\ R\in\mathbf{R},\ L\in\mathbf{L},\ B\in\mathbf{B}\}.
\]
Of course, $\mathbf{R}\vee\mathbf{L}\vee\mathbf{B}$ strictly contains $\mathbf{R}$.

\begin{proposition}
\label{prop:rjoinl}
The $i$-Catalan monoid $IC_4$ is inherently \nfb\ relative to the pseudovariety $\mathbf{R}\vee\mathbf{L}\vee\mathbf{B}$.
\end{proposition}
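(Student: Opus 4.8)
The plan is to apply Proposition~\ref{prop:newclass}, so it suffices to produce, for each $S\in\mathbf{R}\vee\mathbf{L}\vee\mathbf{B}$, an infinite family of non-isoterms satisfying (P1) and (P2). Using the division description of $\mathbf{R}\vee\mathbf{L}\vee\mathbf{B}$ recorded above, I fix $R\in\mathbf{R}$, $L\in\mathbf{L}$, $B\in\mathbf{B}$ with $S$ a divisor of $T:=R\times L\times B$. Since satisfaction of identities is inherited by divisors, every non-isoterm for $T$ is a non-isoterm for $S$, so I only need the required words for $T$. As $T$ satisfies an identity if and only if each of $R,L,B$ does, I will seek a single family $\{\mathbf{w}_n\}$ each of whose members satisfies one nontrivial identity \emph{simultaneously} in $R$, in $L$, and in $B$.

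First I record the mechanism that makes one identity serve all three factors at once. Call a word \emph{$K$-full} if it is a product $\mathbf{w}=\mathbf{c}_1\mathbf{c}_2\cdots\mathbf{c}_K$ of $K$ consecutive factors with $\alf(\mathbf{c}_j)=\alf(\mathbf{w})$ for each $j$. I claim that if $\mathbf{w}$ is $K$-full with $K\ge\max(|R|,|L|)$, then $T$ satisfies $\mathbf{w}\bumpeq\mathbf{w}\mathbf{w}$. For $B$ this is immediate: every element of a band is idempotent, so $\varphi(\mathbf{w}\mathbf{w})=\varphi(\mathbf{w})^2=\varphi(\mathbf{w})$ for every substitution $\varphi$ into $B$. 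For $R$, group the $K$ full factors into $|R|$ consecutive blocks $\mathbf{u}_1,\dots,\mathbf{u}_{|R|}$; each block still has full content, so $\alf(\mathbf{u}_1)\supseteq\cdots\supseteq\alf(\mathbf{u}_{|R|})\supseteq\alf(\mathbf{w})$ (all being equal to $\alf(\mathbf{w})$), and Proposition~\ref{prop:iden} applied with $\mathbf{u}=\mathbf{w}$ and $\mathbf{v}=\mathbf{w}$ gives $R\models\mathbf{w}\bumpeq\mathbf{w}\mathbf{w}$. For $L$ the same argument run through the left–right dual of Proposition~\ref{prop:iden} (equivalently, applied to the opposite $\mR$-trivial monoid $L^{\mathrm{op}}$ and the reversed word, noting that reversal turns a full partition into a full partition) yields $L\models\mathbf{w}\bumpeq\mathbf{w}\mathbf{w}$. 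Hence $\mathbf{w}\bumpeq\mathbf{w}\mathbf{w}$ holds in $T$, and since $\mathbf{w}\ne\mathbf{w}\mathbf{w}$ as words, every $K$-full word is a non-isoterm for $T$.

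It remains to construct, for each $n$, a $K$-full word $\mathbf{w}_n$ (with $K=\max(|R|,|L|)$ fixed) that also satisfies (P1) and (P2); this is the technical heart. Note that such a word can have \emph{no} linear variables, so the amplification trick with the map $f$ is unavailable and the spread demanded by (P2) must be built into a full-content word directly. I would take a large alphabet $A$ and set $\mathbf{w}_n=\pi_1\pi_2\cdots\pi_K$, where each $\pi_j$ is a permutation word listing all of $A$; then $\mathbf{w}_n$ is automatically $K$-full and each variable occurs exactly $K$ times. The remaining two requirements pull in opposite directions: (P1) forbids any biletter from repeating, forcing the rounds $\pi_j$ to have pairwise (and boundary) disjoint adjacency sets, i.e.\ to be edge-disjoint Hamiltonian paths in the complete digraph on $A$; while (P2) asks that consecutive occurrences of each variable be separated by at least $n$ distinct letters, which wants the occurrences spread evenly. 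The main obstacle is reconciling these: I expect that for $|A|$ large relative to $K$ and $n$ (so that the forbidden biletters and the "short-gap" events form only a small fraction of the available structure) a greedy or probabilistic choice of the $K$ permutations meets both conditions, and the same word may be used for all small indices as well, its gaps being at least $K$. Granting such $\{\mathbf{w}_n\}$, each is a non-isoterm for $S$ satisfying (P1) and (P2), so Proposition~\ref{prop:newclass} shows that $IC_4$ is inherently \nfb\ relative to $\mathbf{R}\vee\mathbf{L}\vee\mathbf{B}$.
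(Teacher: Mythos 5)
Your reduction to Proposition~\ref{prop:newclass} is legitimate, and your mechanism for producing non-isoterms is sound: grouping the $K\ge\max(|R|,|L|)$ full-content blocks into $|R|$ (resp.\ $|L|$) consecutive blocks and invoking Proposition~\ref{prop:iden} (resp.\ its left--right dual), together with idempotency of every element of the band $B$, does show that every $K$-full word $\mathbf{w}$ satisfies $\mathbf{w}\bumpeq\mathbf{w}\mathbf{w}$ in $R\times L\times B$, hence is a non-isoterm for every divisor $S$. The genuine gap is exactly where you flag it: you never prove that $K$-full words satisfying (P1) and (P2) exist. ``A greedy or probabilistic choice should work'' is not a proof, and it is not a routine first-moment computation either: with $K$ permutations of an $N$-letter alphabet there are about $KN$ biletter slots and pairwise collision probability of order $1/N^2$, so the expected number of (P1)-violations is of order $K^2$, which does not tend to zero; one needs an alteration or local-lemma argument, or an explicit algebraic construction (for instance $\pi_j$ listing $\mathbb{Z}_N$, $N$ prime, in arithmetic progressions with distinct steps $s_j$, chosen so that consecutive permutations also have the spread property required by (P2)). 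The existence claim is in fact true and fillable along such lines, but as written you have reduced the proposition to an unproven combinatorial lemma, and that lemma is the entire content of the proof.

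It is instructive to see how the paper sidesteps this difficulty: it chooses a different witnessing identity. Instead of doubling a full-content word, it takes $\mathbf{w}_n=\mathbf{u}_{2n}(m)\mathbf{v}_{2n}(m)$, a ``head'' as in \eqref{eq:construct1} followed by a mirrored ``tail'' \eqref{eq:construct2} built from the shuffled word $\mathbf{y}'_{2n}$ and the reversed insertion map $\overline{f}$, and proves the identity $\mathbf{u}_{2n}(m)x\mathbf{v}_{2n}(m)\bumpeq\mathbf{w}_n$: the inserted $x$ is absorbed on the right of the head by Proposition~\ref{prop:iden} applied to $R$, on the left of the tail by the dual statement applied to $L$, and by the Green--Rees fact that bands satisfy $\mathbf{u}x\mathbf{v}\bumpeq\mathbf{u}\mathbf{v}$ whenever $x\in\alf(\mathbf{u})=\alf(\mathbf{v})$. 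This identity needs nested contents only inside the head (for $R$) and only inside the tail (for $L$), not full content in every block; consequently the fresh padding variables of $f$ remain usable---each occurs once in the head and once, in mirrored position, in the tail---and (P1), (P2) can be read off the explicit construction. Your doubling identity $\mathbf{w}\bumpeq\mathbf{w}\mathbf{w}$ forces full content in every block, which is precisely what makes (P1) and (P2) hard to arrange and pushes the whole burden onto the missing combinatorial lemma.
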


\begin{proof}
In view of Proposition~\ref{prop:newclass}, it suffices to exhibit, for any given monoid $S\in\mathbf{R}\vee\mathbf{L}\vee\mathbf{B}$ an infinite sequence of non-isoterms $\{\mathbf{w}_n\}$ satisfying (P1) and (P2). So, take any $S\in\mathbf{R}\vee\mathbf{L}\vee\mathbf{B}$, fix some $R\in\mathbf{R}$, $L\in\mathbf{L}$, and $B\in\mathbf{B}$ such that $S$ is a divisor of $R\times L\times B$, and let $m=\max\{|R|,|L|\}$. For each $n=2,3,\dotsc$, we set
\[
\mathbf{w}_n=\mathbf{u}_{2n}(m)\mathbf{v}_{2n}(m),
\]
where the `head' $\mathbf{u}_{2n}(m)$ and the `tail' $\mathbf{v}_{2n}(m)$ are obtained by suitable modifications of the construction \eqref{eq:construct}. Namely,
\begin{equation}
\label{eq:construct1}
\mathbf{u}_{2n}(m)=xf^{m-1}(\mathbf{y}_{2n})x f^{m-2}(\mathbf{y}_{2n}) \cdots xf(\mathbf{y}_{2n}) x\mathbf{y}_{2n},
\end{equation}
where $\mathbf{y}_{2n}=y_1 y_2 \cdots y_{2n}$, the variables $y_1,y_2,\dots,y_{2n}$ are all distinct,  $f$ is the function defined after Proposition~\ref{prop:nfbcor} (recall that $f$ adds a `fresh' variable before the first variable, between each pair of adjacent variables, and after the last variable of its argument), and the variable $x$ does not occur in the word $f^{m-1}(\mathbf{y}_{2n})$. For constructing the `tail', let $\mathbf{y}'_{2n}=y_1 y_3 \cdots y_{2n-1}\cdot y_2 y_4\cdots y_{2n}$ and define
\begin{equation}
\label{eq:construct2}
\mathbf{v}_{2n}(m)=\mathbf{y}'_{2n}x\overline{f}(\mathbf{y}'_{2n})\cdots \overline{f}^{m-2}(\mathbf{y}'_{2n})x\overline{f}^{m-1}(\mathbf{y}'_{2n}),
\end{equation}
where the function $\overline{f}$ inserts the same `fresh' variables as $f$ but in the opposite order. For instance, if $m=n=2$, then we have
\[
\mathbf{w}_2=\underbrace{xp_0y_1p_1y_2p_2y_3p_3y_4p_4xy_1y_2y_3y_4}_{\mathbf{u}_{4}(2)}\cdot\underbrace{y_1y_3y_2y_4xp_4y_1p_3y_3p_2y_2p_1y_4p_0x}_{\mathbf{v}_{4}(2)}.
\]
It readily follows from the construction of the words $\mathbf{w}_n$, $n=2,3,\dotsc$, that they satisfy  the properties (P1) and (P2). It remains to verify that all these words are non-isoterms for the monoid $S$.

As in the proof of Lemma~\ref{lem:noisoterm}, we apply Proposition~\ref{prop:iden} to the $\mR$-trivial monoid $R$ with $\mathbf{u}_{2n}(m)$ and $x$ in the roles of $\mathbf{u}$ and, respectively, $\mathbf{v}$ and get that the identity $\mathbf{u}_{2n}(m)x\bumpeq\mathbf{u}_{2n}(m)$ holds in $R$. Multiplying this identity through by $\mathbf{v}_{2n}(m)$ on the right, we deduce that $R$ satisfies the identity
\begin{equation}\label{eq:product}
\mathbf{u}_{2n}(m)x\mathbf{v}_{2n}(m)\bumpeq\mathbf{u}_{2n}(m)\mathbf{v}_{2n}(m)=\mathbf{w}_n.
\end{equation}
Using the left-right symmetry, we apply the dual of Proposition~\ref{prop:iden} to the $\mL$-trivial monoid $L$ with $\mathbf{v}_{2n}(m)$ and $x$ in the roles of $\mathbf{u}$ and, respectively, $\mathbf{v}$ and get that the identity $\mathbf{v}_{2n}(m)\bumpeq x\mathbf{v}_{2n}(m)$ holds in $L$. Multiplying this identity through by $\mathbf{u}_{2n}(m)$ on the left, we deduce that $L$ also satisfies \eqref{eq:product}.

As shown by Green and Rees \cite{GrRe52} (see also \cite[\S4.5]{Ho95}), every band satisfies all identities of the form $\mathbf{u}x\mathbf{v}\bumpeq\mathbf{u}\mathbf{v}$ where $x\in\alf(\mathbf{u})=\alf(\mathbf{v})$. By the construction, $x\in\alf(\mathbf{u}_{2n}(m))=\alf(\mathbf{v}_{2n}(m))$, whence the identity \eqref{eq:product} holds in the band $B$.

Since \eqref{eq:product} holds in $R$, $L$, and $B$, it holds in the direct product $R\times L\times B$ of these monoids. As $S$ is a divisor of this product, it also satisfies \eqref{eq:product}. Thus, each of the words $\mathbf{w}_n$, $n=2,3,\dotsc$, is a non-isoterm for $S$.
\end{proof}

Even though Proposition~\ref{prop:rjoinl} has not brought new concrete applications so far, we think that it deserves attention as it demonstrates the idea of extending the range of our approach at work and also shows the price to be paid---wider coverage requires a more cumbersome construction. The ultimate goal at which one can aim here is the class $\mathbf{W}$ of all weakly finitely based semigroups. (Recall that a finite semigroup is \emph{weakly \fb} if it is not inherently \nfb.) The class $\mathbf{W}$ is a semigroup pseudovariety, which was a surprising consequence of \cite[Theorem 1]{Sa87b}; moreover, the second-named author has found a finite axiomatization of $\mathbf{W}$ in terms of so-called pseudoidentities~\cite[Proposition 4.4]{Vo00}. As Proposition~\ref{prop:newclass} shows, to prove that the $i$-Catalan monoid $IC_4$ is inherently \nfb\ relative to the pseudovariety $\mathbf{W}$, it suffices to construct an infinite sequence of non-isoterms fulfilling (P1) and (P2) for each weakly finitely based semigroup. If one succeeds, the monoid $IC_4$ will be the first example of a semigroup that is not inherently \nfb\ (in the `absolute' sense) but is not contained in any \fb\ variety generated by a finite semigroup. The question of whether or not such an example exists is a well-known open problem; see~\cite[Problem 4.4]{Vo01}.

As a final remark, observe that Proposition~\ref{prop:nfbcor}, the key tool behind all `non-finiteness' arguments in this paper, does not restrict to finite semigroups only. Therefore, our technique can be applied to show that certain infinite semigroups are \nfb. Say, for every commutative semigroup (finite or infinite), its direct product with $IC_4$ or $C_5$ is \nfb. Applications of this kind will be presented in a subsequent paper.

\paragraph{Acknowledgements.} The authors thank Edmond W. H. Lee for a number of valuable remarks.

\appendix

\section{Cardinality of the monoid $IC_m$}

Recall that $[m]$ stands for the set of the first $m$ positive integers ordered in the usual way: $1<2<\dots<m$. Here we exhibit a bijection between the monoid $IC_m$ of all partial order preserving and extensive injections of $[m]$ and the monoid $C_{m+1}$ of all total order preserving and extensive transformations of $[m+1]$.

Given a partial injection $\alpha$ of $[m]$, define a transformation $\overline\alpha\colon[m+1]\to[m+1]$, using backward induction on $k\in[m+1]$:
\begin{itemize}
  \item $(m+1)\overline\alpha=m+1$;
  \item if $k\le m$ and $k\alpha$ is defined, then $k\overline\alpha=k\alpha$; otherwise, $k\overline\alpha=(k+1)\overline\alpha$.
\end{itemize}
One can unfold the second line in definition of $\overline\alpha$ as follows: for $k\le m$,
\begin{equation}\label{eq:appendix}
k\overline\alpha=\begin{cases}
\ell\alpha &\text{if $\ell$ is the least with $k\le\ell$ in $\dom\alpha$},\\
m+1        &\text{if there is no $\ell$ with $k\le\ell$ in $\dom\alpha$}.
\end{cases}
\end{equation}

 To show that the map $\alpha\mapsto\overline\alpha$ is one-to-one, consider for each transformation $\beta\colon[m+1]\to[m+1]$ that fixes $m+1$, its restriction $\widehat\beta$ to the set
\begin{equation}
\label{eq:domain}
\{k\mid k\beta\ne m+1 \text{ and } k\ge i \text{ for all } i \text{ such that } k\beta=i\beta\}.
\end{equation}
Since $(m+1)\beta=m+1$, the set \eqref{eq:domain}, that is, the domain of $\widehat\beta$ is contained in $[m]$. Then $\widehat\beta$ can be thought as a partial injection of $[m]$, and it is easy to see that $\alpha=\widehat{\overline\alpha}$.

\begin{lemma}
\label{lem:preserving}
If a partial injection $\alpha$ is order preserving or extensive, then so is the transformation $\overline\alpha$.
\end{lemma}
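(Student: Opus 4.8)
The plan is to prove the two properties separately, working directly from the unfolded description of $\overline\alpha$ given in \eqref{eq:appendix}, since that formula makes the value $k\overline\alpha$ explicit in terms of the ``next'' element of $\dom\alpha$ at or above $k$. For each $k\in[m]$ I introduce the auxiliary quantity
\[
\lambda(k)=\begin{cases}\text{the least }\ell\ge k\text{ in }\dom\alpha,\\ \text{undefined if no such }\ell\text{ exists,}\end{cases}
\]
so that $k\overline\alpha=\lambda(k)\alpha$ when $\lambda(k)$ is defined and $k\overline\alpha=m+1$ otherwise, while $(m+1)\overline\alpha=m+1$ by definition. The elementary monotonicity fact I will use repeatedly is that $\lambda$ itself is order preserving on its domain of definition and, more precisely, that $k\le k'$ forces $\lambda(k)\le\lambda(k')$ whenever both are defined, and that $\lambda(k')$ undefined forces $\lambda(k)$ undefined for $k\ge k'$.

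First I would treat \emph{order preservation}. Assume $\alpha$ is order preserving and take $k\le k'$ in $[m+1]$; I must show $k\overline\alpha\le k'\overline\alpha$. The case $k'=m+1$ is immediate since $k'\overline\alpha=m+1$ is the largest possible value. For $k\le k'\le m$ I split according to whether $\lambda(k')$ is defined. If it is not, then $k'\overline\alpha=m+1$ and there is nothing to prove. If $\lambda(k')$ is defined, then since $k\le k'$ also $\lambda(k)$ is defined with $\lambda(k)\le\lambda(k')$; both lie in $\dom\alpha$, so order preservation of $\alpha$ gives $\lambda(k)\alpha\le\lambda(k')\alpha$, that is $k\overline\alpha\le k'\overline\alpha$.

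Next I would treat \emph{extensivity}. Assume $\alpha$ is extensive and fix $k\in[m+1]$; I must show $k\le k\overline\alpha$. For $k=m+1$ this is equality. For $k\le m$, if $\lambda(k)$ is undefined then $k\overline\alpha=m+1\ge k$ and we are done. If $\lambda(k)$ is defined then by its definition $k\le\lambda(k)$, and extensivity of $\alpha$ gives $\lambda(k)\le\lambda(k)\alpha=k\overline\alpha$; chaining the two inequalities yields $k\le k\overline\alpha$, as required.

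The argument is essentially bookkeeping, and no single step is genuinely hard; the only place demanding care is the case analysis on whether $\lambda(k)$ (equivalently, whether there is an element of $\dom\alpha$ at or above $k$) is defined, since both the order-preservation and extensivity verifications degenerate trivially into the value $m+1$ in the undefined case and must invoke the corresponding property of $\alpha$ only in the defined case. I expect the main (modest) obstacle to be stating the monotonicity of $\lambda$ cleanly enough that both verifications read off from it without repeating the unfolding of \eqref{eq:appendix}; once that lemma-within-the-proof is isolated, each property follows in a couple of lines.
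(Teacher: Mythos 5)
Your proposal is correct and follows essentially the same argument as the paper: both work from the unfolded formula \eqref{eq:appendix}, split into cases according to whether an element of $\dom\alpha$ at or above the given point exists (equivalently, whether the value is $m+1$), and in the nontrivial case use the monotonicity of the ``least such $\ell$'' assignment (your $\lambda$, the paper's $\ell_1\le\ell_2$) together with the corresponding property of $\alpha$. Your only departure is notational—isolating $\lambda$ as an explicit auxiliary map—which the paper handles inline.
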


\begin{proof}
First suppose that $\alpha$ is order preserving. Take $k_1,k_2\in[m+1]$ with $k_1\le k_2$; we have to verify that $k_1\overline\alpha\le k_2\overline\alpha$. If $k_2\overline\alpha=m+1$, then the claim holds. If $k_2\overline\alpha\ne m+1$, then according to \eqref{eq:appendix}, there exists a number $\ell\in\dom\alpha$ with $k_2\le\ell$ and $k_2\overline\alpha=\ell_2\alpha$ where $\ell_2$ is the least such number. Since $k_1\le k_2\le\ell$, we also have
$k_1\overline\alpha=\ell_1\alpha$ where $\ell_1$ is the least number in $\dom\alpha$ such that $k_1\le\ell_1$. The choice of $\ell_1$ and $\ell_2$ and the inequality $k_1\le k_2$ imply $\ell_1\le\ell_2$. Since $\alpha$ is order preserving, we have $\ell_1\alpha\le \ell_2\alpha$ whence $k_1\overline\alpha=\ell_1\alpha\le\ell_2\alpha=k_2\overline\alpha$.

The case where $\alpha$ is extensive is even simpler. Take $k\in[m+1]$; we have to show that $k\le k\overline\alpha$. If $k\overline\alpha=m+1$, then the claim holds. If $k\overline\alpha\ne m+1$, then according to \eqref{eq:appendix}, $k\overline\alpha=\ell\alpha$ where $\ell$ is the least number in $\dom\alpha$ such that $k\le\ell$. Since $\alpha$ is extensive, we have $\ell\le\ell\alpha$, which gives $k\le\ell\le\ell\alpha=k\overline\alpha$.
\end{proof}

\begin{lemma}
\label{lem:inverse}
If a transformation $\beta\colon[m+1]\to[m+1]$ that fixes $m+1$ is order preserving, then so is the partial injection $\widehat\beta$, and $\overline{\widehat\beta}=\beta$. In addition, if $\beta$ is extensive, then so is $\widehat\beta$.
\end{lemma}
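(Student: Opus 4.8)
The plan is to dispose quickly of the two ``preservation'' assertions and then concentrate on the identity $\overline{\widehat\beta}=\beta$, which carries all the content. Since $\widehat\beta$ is by construction a restriction of $\beta$, order-preservation and extensivity of $\widehat\beta$ are inherited from $\beta$ verbatim: for $k_1\le k_2$ in $\dom\widehat\beta$ one has $k_1\widehat\beta=k_1\beta\le k_2\beta=k_2\widehat\beta$, and for $k\in\dom\widehat\beta$ one has $k\widehat\beta=k\beta\ge k$ whenever $\beta$ is extensive. So no further work is needed for these parts, and everything reduces to the claimed identity.

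For that identity, the key structural observation is that an order-preserving $\beta$ fixing $m+1$ has \emph{interval fibers}: each preimage set $\{i\mid i\beta=v\}$ is a block of consecutive integers, and these blocks are arranged from left to right in increasing order of their common value. Comparing with \eqref{eq:domain}, the domain of $\widehat\beta$ is then exactly the set of right endpoints of those fiber-blocks whose value differs from $m+1$, and on such a right endpoint $\ell$ one has $\ell\widehat\beta=\ell\beta$. With this picture I would verify $k\overline{\widehat\beta}=k\beta$ for every $k\in[m+1]$ using the unfolded formula \eqref{eq:appendix}. The case $k=m+1$ is immediate since both sides equal $m+1$. If $k\le m$ and $k\beta=m+1$, then every $\ell\ge k$ satisfies $\ell\beta\ge k\beta=m+1$, forcing $\ell\beta=m+1$ and hence $\ell\notin\dom\widehat\beta$; so there is no $\ell\ge k$ in $\dom\widehat\beta$, and \eqref{eq:appendix} gives $k\overline{\widehat\beta}=m+1=k\beta$.

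The remaining case $k\le m$, $k\beta=v\ne m+1$ is where the only real bookkeeping occurs, and I expect it to be the main (though mild) obstacle. Let $\ell^*$ be the maximum of the fiber-block containing $k$, so that $\ell^*\in\dom\widehat\beta$ and $k\le\ell^*$. I would then argue that $\ell^*$ is the \emph{least} element of $\dom\widehat\beta$ that is $\ge k$: every integer strictly between $k$ and $\ell^*$ lies in the same fiber-block as $k$, hence is not that block's right endpoint and therefore not in $\dom\widehat\beta$. Invoking \eqref{eq:appendix} with this $\ell^*$ yields $k\overline{\widehat\beta}=\ell^*\widehat\beta=\ell^*\beta=v=k\beta$, completing the pointwise verification and hence $\overline{\widehat\beta}=\beta$. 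The whole difficulty is thus reduced to the elementary fact that order-preservation forces the fibers to be intervals, after which every step is a direct reading of the definitions.
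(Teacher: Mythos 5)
Your proof is correct and takes essentially the same route as the paper's: the preservation claims are dispatched by the restriction argument, and $\overline{\widehat\beta}=\beta$ is verified pointwise with the same case split on whether $k\beta=m+1$, with the maximum of the fiber of $k$ playing the decisive role. The only cosmetic difference is in the main case: where you use the interval-fiber picture to identify the least element of $\dom\widehat\beta$ above $k$ as exactly the right endpoint $\ell^*$ of the fiber of $k$, the paper leaves that least element $\ell$ unidentified and instead sandwiches it, $k\le\ell\le j$ with $j=\max\{i\mid i\beta=k\beta\}$, so that order preservation forces $\ell\beta=k\beta$ directly.
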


\begin{proof}
Since $\widehat\beta$ is a restriction of $\beta$, the claims that $\widehat\beta$ is order preserving or extensive whenever so is $\beta$ follow immediately.

Let us check the equality $\overline{\widehat\beta}=\beta$. Take any $k\in[m+1]$; we have to verify that $k\overline{\widehat\beta}=k\beta$. If $k\beta=m+1$, then $\ell\beta=m+1$ for all $\ell\ge k$ since the transformation  $\beta$ preserves order. From \eqref{eq:domain} we see that no $\ell$ with $\ell\ge k$ lies in $\dom\widehat\beta$. Hence, according to \eqref{eq:appendix}, we get $k\overline{\widehat\beta}=m+1$, that is, $k\overline{\widehat\beta}=k\beta$.

So, assume that $k\beta\ne m+1$. Since $(m+1)\beta=m+1$, this implies that $k\in[m]$. According to \eqref{eq:appendix}, $k\overline{\widehat\beta}=\ell\widehat\beta$ where $\ell$ is the least number in $\dom\widehat\beta$ with $k\le\ell$. Since $\widehat\beta$ is a restriction of $\beta$ and $\ell$ lies in $\dom\widehat\beta$, we have $\ell\widehat\beta=\ell\beta$. Let
\[
j=\max\{i \mid i\beta=k\beta\}.
\]
Then $j\ge k$ and $j\in\dom\widehat\beta$ by \eqref{eq:domain}. Since $\ell$ is the least number with these two properties, we conclude that $\ell\le j$. Thus, $k\le\ell\le j$ whence $k\beta\le\ell\beta\le j\beta$ as $\beta$ preserves order. But $j\beta=k\beta$, and therefore, $\ell\beta=k\beta$. Hence $k\overline{\widehat\beta}=\ell\beta=k\beta$.
\end{proof}

Lemmas~\ref{lem:preserving} and~\ref{lem:inverse} imply that the map $\alpha\mapsto\overline\alpha$ is a bijection of $IC_m$ onto $C_{m+1}$. In particular, $|IC_m|=|C_{m+1}|$. Fig.~\ref{fig:bijection} on the next page illustrates the bijection for $m=3$. Perhaps, it is worth explicitly stating that, except for $m=1$, this bijection is not a monoid isomorphism.

Even though it is not essential for the present paper, we mention that by Lemmas~\ref{lem:preserving} and~\ref{lem:inverse}, the map $\alpha\mapsto\overline\alpha$ also gives a bijection of the monoid of all partial order preserving injections of $[m]$ onto the monoid of all total order preserving transformations of $[m+1]$ that fix $m+1$. For $m=2$ these are the monoids $B_2^1$ and respectively $A_2^1$ discussed in Section~\ref{sec:discussion}. (In fact, the diagrams in Fig.~\ref{fig:Brandt} and~\ref{fig:A2} are vertically aligned according to the bijection $\alpha\mapsto\overline\alpha$.)

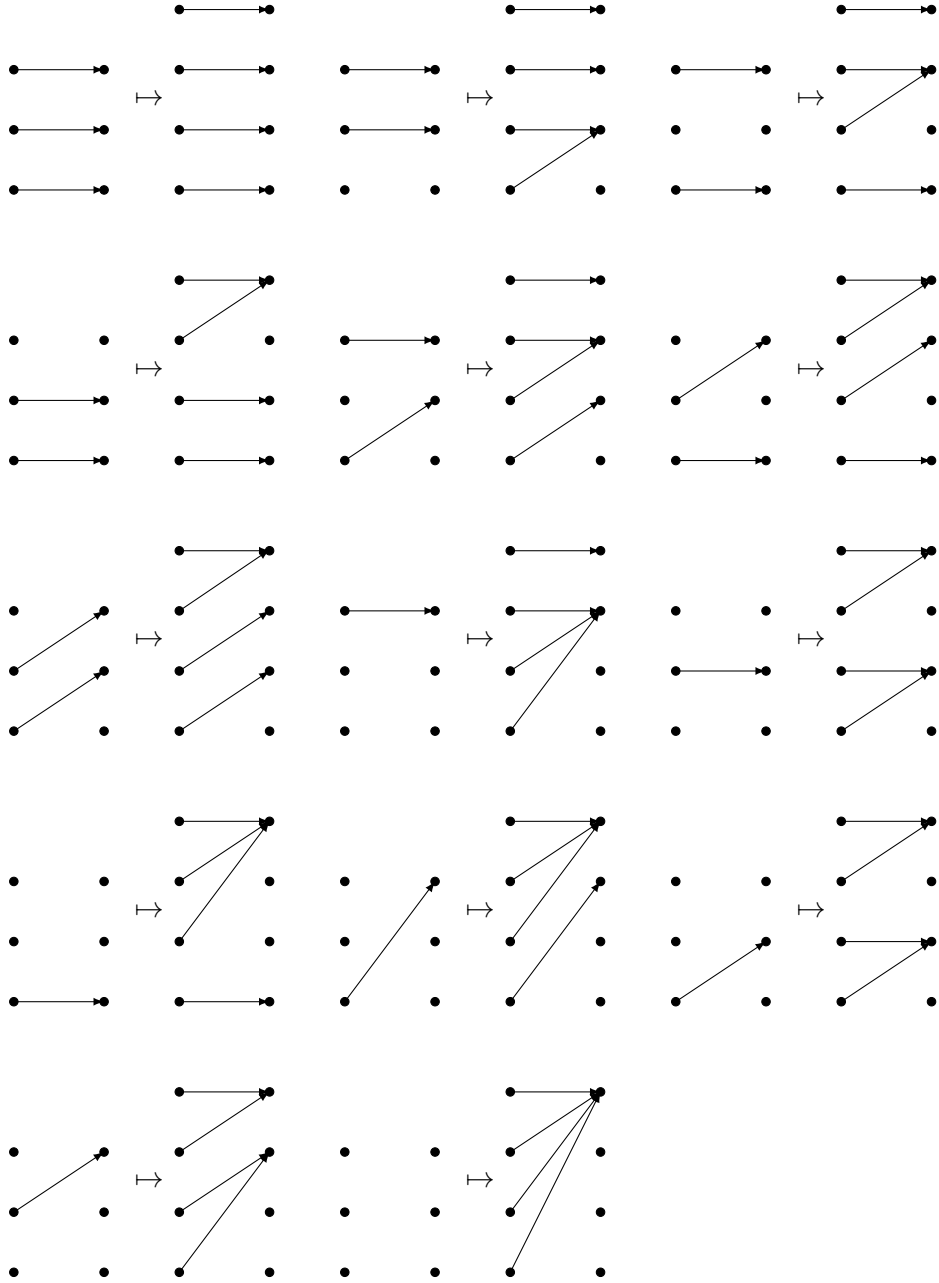
\begin{figure}[p]
\centering
\begin{tikzpicture}
[scale=0.8]
\foreach \x in {3.5,5,9,10.5,14.5,16} \foreach \y in {8,12.5,17,21.5} \filldraw (\x,\y) circle (2pt);

\foreach \x in {0.75,2.25,3.5,5,6.25,7.75,9,10.5,11.75,13.25,14.5,16} \foreach \y in {5,6,7,9.5,10.5,11.5,14,15,16,18.5,19.5,20.5} \filldraw (\x,\y) circle (2pt);

\foreach \x in {3.5,5,9,10.5} \foreach \y in {3.5} \filldraw (\x,\y) circle (2pt);

\foreach \x in {0.75,2.25,3.5,5,6.25,7.75,9,10.5} \foreach \y in {0.5,1.5,2.5} \filldraw (\x,\y) circle (2pt);

\draw (0.75,18.5) edge[-latex] (2.25,18.5);
\draw (0.75,19.5) edge[-latex] (2.25,19.5);
\draw (0.75,20.5) edge[-latex] (2.25,20.5);
\node[] at (3,20) {$\mapsto$};
\draw (3.5,18.5) edge[-latex] (5,18.5);
\draw (3.5,19.5) edge[-latex] (5,19.5);
\draw (3.5,20.5) edge[-latex] (5,20.5);
\draw (3.5,21.5) edge[-latex] (5,21.5);

\draw (6.25,19.5) edge[-latex] (7.75,19.5);
\draw (6.25,20.5) edge[-latex] (7.75,20.5);
\node[] at (8.5,20) {$\mapsto$};
\draw (9,18.5) edge[-latex] (10.5,19.5);
\draw (9,19.5) edge[-latex] (10.5,19.5);
\draw (9,20.5) edge[-latex] (10.5,20.5);
\draw (9,21.5) edge[-latex] (10.5,21.5);

\draw (11.75,18.5) edge[-latex] (13.25,18.5);
\draw (11.75,20.5) edge[-latex] (13.25,20.5);
\node[] at (14,20) {$\mapsto$};
\draw (14.5,18.5) edge[-latex] (16,18.5);
\draw (14.5,19.5) edge[-latex] (16,20.5);
\draw (14.5,20.5) edge[-latex] (16,20.5);
\draw (14.5,21.5) edge[-latex] (16,21.5);

\draw (0.75,14) edge[-latex] (2.25,14);
\draw (0.75,15) edge[-latex] (2.25,15);
\node[] at (3,15.5) {$\mapsto$};
\draw (3.5,14) edge[-latex] (5,14);
\draw (3.5,15) edge[-latex] (5,15);
\draw (3.5,16) edge[-latex] (5,17);
\draw (3.5,17) edge[-latex] (5,17);

\draw (6.25,14) edge[-latex] (7.75,15);
\draw (6.25,16) edge[-latex] (7.75,16);
\node[] at (8.5,15.5) {$\mapsto$};
\draw (9,14) edge[-latex] (10.5,15);
\draw (9,15) edge[-latex] (10.5,16);
\draw (9,16) edge[-latex] (10.5,16);
\draw (9,17) edge[-latex] (10.5,17);

\draw (11.75,14) edge[-latex] (13.25,14);
\draw (11.75,15) edge[-latex] (13.25,16);
\node[] at (14,15.5) {$\mapsto$};
\draw (14.5,14) edge[-latex] (16,14);
\draw (14.5,15) edge[-latex] (16,16);
\draw (14.5,16) edge[-latex] (16,17);
\draw (14.5,17) edge[-latex] (16,17);

\draw (0.75,9.5) edge[-latex] (2.25,10.5);
\draw (0.75,10.5) edge[-latex] (2.25,11.5);
\node[] at (3,11) {$\mapsto$};
\draw (3.5,9.5) edge[-latex] (5,10.5);
\draw (3.5,10.5) edge[-latex] (5,11.5);
\draw (3.5,11.5) edge[-latex] (5,12.517);
\draw (3.5,12.5) edge[-latex] (5,12.5);

\draw (6.25,11.5) edge[-latex] (7.75,11.5);
\node[] at (8.5,11) {$\mapsto$};
\draw (9,9.5) edge[-latex] (10.5,11.5);
\draw (9,10.5) edge[-latex] (10.5,11.5);
\draw (9,11.5) edge[-latex] (10.5,11.5);
\draw (9,12.5) edge[-latex] (10.5,12.5);

\draw (11.75,10.5) edge[-latex] (13.25,10.5);
\node[] at (14,11) {$\mapsto$};
\draw (14.5,9.5) edge[-latex] (16,10.5);
\draw (14.5,10.5) edge[-latex] (16,10.5);
\draw (14.5,11.5) edge[-latex] (16,12.5);
\draw (14.5,12.5) edge[-latex] (16,12.5);

\draw (0.75,5) edge[-latex] (2.25,5);
\node[] at (3,6.5) {$\mapsto$};
\draw (3.5,5) edge[-latex] (5,5);
\draw (3.5,6) edge[-latex] (5,8);
\draw (3.5,7) edge[-latex] (5,8);
\draw (3.5,8) edge[-latex] (5,8);

\draw (6.25,5) edge[-latex] (7.75,7);
\node[] at (8.5,6.5) {$\mapsto$};
\draw (9,5) edge[-latex] (10.5,7);
\draw (9,6) edge[-latex] (10.5,8);
\draw (9,7) edge[-latex] (10.5,8);
\draw (9,8) edge[-latex] (10.5,8);

\draw (11.75,5) edge[-latex] (13.25,6);
\node[] at (14,6.5) {$\mapsto$};
\draw (14.5,5) edge[-latex] (16,6);
\draw (14.5,6) edge[-latex] (16,6);
\draw (14.5,7) edge[-latex] (16,8);
\draw (14.5,8) edge[-latex] (16,8);

\draw (0.75,1.5) edge[-latex] (2.25,2.5);
\node[] at (3,2) {$\mapsto$};
\draw (3.5,0.5) edge[-latex] (5,2.5);
\draw (3.5,1.5) edge[-latex] (5,2.5);
\draw (3.5,2.5) edge[-latex] (5,3.5);
\draw (3.5,3.5) edge[-latex] (5,3.5);

\node[] at (8.5,2) {$\mapsto$};
\draw (9,0.5) edge[-latex] (10.5,3.5);
\draw (9,1.5) edge[-latex] (10.5,3.5);
\draw (9,2.5) edge[-latex] (10.5,3.5);
\draw (9,3.5) edge[-latex] (10.5,3.5);
\end{tikzpicture}
\caption{The bijection $\alpha\mapsto\overline\alpha$ of $IC_3$ onto $C_4$}\label{fig:bijection}
\end{figure}

\small

\end{document}